\documentclass[11pt,a4paper]{article} \addtolength{\voffset}{-1cm}
\addtolength{\textheight}{2cm} \addtolength{\hoffset}{-1.5cm}
\addtolength{\textwidth}{2.2 cm} \sloppy
\usepackage[latin1]{inputenc}
\usepackage{indentfirst}
\usepackage{amsmath}
\usepackage{amsfonts}
\usepackage{amsthm}
\usepackage{amssymb}
\usepackage{graphics}
\usepackage{graphicx}
\usepackage{multicol}
\usepackage{color}
\usepackage{authblk}
\usepackage{epsfig}

\def\N{\mathbb{N}} 

\def\P{\mathbb{P}}

\def\E{\mathbb{E}}

\def\R{\mathbb{R}}

\def\w{\widetilde}

\def\ud{\mathrm{d}}

\newcommand{\be} {\begin{equation}}
\newcommand{\ee} {\end{equation}}
\newcommand{\bea} {\begin{eqnarray}}
\newcommand{\eea} {\end{eqnarray}}
\newcommand{\Bea} {\begin{eqnarray*}}
\newcommand{\Eea} {\end{eqnarray*}}


\newtheorem{Thm}{Theorem}
\newtheorem{Lem}[Thm]{Lemma}

\newtheorem{Prop}[Thm]{Proposition}
\newtheorem{Cor}[Thm]{Corollary}

\theoremstyle{definition} 
\theoremstyle{definition} 
\theoremstyle{definition} \newtheorem*{key}{Key words}
\theoremstyle{definition} \newtheorem*{ams}{A.M.S. Classification}

\theoremstyle{remark}\newtheorem{Rque}{Remark}

\begin{document}
\title{On the extinction of Continuous State Branching Processes with catastrophes}
\author[1]{Vincent Bansaye \thanks{vincent.bansaye@polytechnique.edu}}
\author[2]{Juan Carlos Pardo Millan \thanks{jcpardo@cimat.mx}}
\author[1,3]{Charline Smadi \thanks{charline.smadi@polytechnique.edu}}
\affil[1]{CMAP, \'Ecole Polytechnique, Route de Saclay, F-91128 Palaiseau Cedex, France}
\affil[2]{CIMAT A.C. Calle Jalisco s/n. C.P. 36240, Guanajuato, Mexico}
\affil[3]{Universit\'e Paris-Est, CERMICS (ENPC), 6-8 Avenue Blaise Pascal, Cit\'e
Descartes, F-77455 Marne-la-Vallée, France}

%
\maketitle \vspace{1.5cm}

\begin{abstract}
We consider continuous state branching processes (CSBP)  with additional multiplicative jumps   modeling dramatic  events in a random environment. 
These jumps  are described by a L\'evy process with bounded variation  paths. 
We construct  a process of this class  as the unique solution of a stochastic differential equation.  
 The quenched branching property of the process allows us to derive quenched and annealed results  and to  observe new asymptotic behaviors. 
We characterize the  Laplace exponent of the process as the solution of a backward ordinary differential equation and establish  the probability of extinction.
 Restricting our attention to the critical and subcritical cases, we show that   four regimes  arise for the speed of extinction, as in the case  of branching processes in random environment in discrete time and space.
The proofs   are based on the precise asymptotic behavior of exponential functionals of L\'evy processes. Finally, we apply these results to  a cell infection model and determine the mean speed of propagation of the infection.
\end{abstract}

\begin{key} Continuous State Branching Processes, L\'evy processes, Poisson Point Processes, 
Random Environment, Extinction, Long time behavior
\end{key}
\begin{ams} 60J80, 60J25, 60G51, 60H10, 60G55, 60K37.
\end{ams}

\section{Introduction}
  Continuous state branching processes (CSBP) are the analogues of Galton-Watson (GW) processes in continuous time and continuous state space. They have  been introduced by Jirina \cite{MR0101554} and studied by many authors  including Bingham \cite{MR0410961}, Grey \cite{MR0408016}, Grimvall \cite{MR0362529}, Lamperti \cite{MR0208685,MR0217893}, to name but a few.

	A CSBP $Z=(Z_t, t\geq 0)$ is a strong Markov process taking values in $[0,\infty]$, where $0$ and $\infty$ are absorbing states,  and satisfying the branching property.  We denote by  $(\mathbb{P}_x, \, x> 0)$  the law of $Z$ starting from $x$. Lamperti  \cite{MR0217893}  proved  that there is a bijection between CSBP and scaling limits of GW processes.
Thus they may  model the evolution of renormalized large populations on a large time scale.

The branching property implies that the Laplace transform of $Z_t$ is of the form
\begin{equation*}
\mathbb{E}_x\Big[ \exp(-\lambda  Z_t)\Big]=\exp\{-x
u_t(\lambda)\},\qquad\textrm{for }\lambda\geq 0,
\label{CBut}
\end{equation*}
for some non-negative function $u_t$. According to Silverstein \cite{MR0226734}, this function is determined by the
integral equation
\begin{equation*}
\int_{u_t(\lambda)}^\lambda \frac{1}{\psi(u)}{\rm d} u=t,
\label{DEut}
\end{equation*}
where $\psi$ is known as the  branching mechanism associated to $Z$. We assume here that $Z$ has finite mean, so that  we have the following classical representation  
\begin{equation}\label{defpsi}\psi(\lambda) = -g \lambda + \sigma^2 \lambda^2 + \int_0^{\infty} \left( e^{-\lambda {z}}-1+\lambda {z} 
\right) \mu(\ud {z}),\end{equation}
where $g\in \R$, $\sigma\geq 0$ and $\mu$ is a $\sigma$-finite measure on $(0,\infty)$ such that $\int_{(0,\infty)}\big({z}\land {z}^2\big)\mu(\ud {z})$ is finite.
The CSBP is then characterized by the triplet $(g, \sigma, \mu)$ and can also be defined as the unique  non-negative strong solution of a  stochastic differential equation. More precisely, from Fu and Li \cite{RePEc:eee:spapps:v:120:y:2010:i:3:p:306-330} we have
\begin{equation}\label{defCSBP}
Z_t=Z_0+\int_0^t gZ_s \ud s +\int_0^t\sqrt{2\sigma^2 Z_s} \ud B_s+\int_0^t\int_0^\infty\int_0^{Z_{s-}} z\widetilde{N}_0(\ud s, \ud z, \ud u),
\end{equation}
where $B$ is a standard Brownian motion, $N_0(\ud s, \ud z, \ud u)$ is a Poisson random measure with intensity $\ud s\mu(\ud z)\ud u$ independent of $B$,  and  $\widetilde{N}_0$ is the  compensated measure of $N_0$. 

The stable case with drift, i.e. $\psi(\lambda)=-g\lambda+c\lambda^{1+\beta}$, with $\beta $ in $(0,1]$,  corresponds to the CSBP  that one can obtain by scaling limits of GW processes with a fixed   reproduction law.   It is  of special interest in this paper since the Laplace exponent can be computed explicitly and  it can also  be used to derive asymptotic results for more general  cases. \\

 In this work, we are interested in modeling catastrophes which occur at random and kill each individual with some probability (depending on the catastrophe). In terms of the CSBP representing the scaling limit of the size of a large population, this amounts to  letting the process make a negative jump, i.e. multiplying its current value by a random fraction.
The process that we obtain is still Markovian whenever the catastrophes  follow a time homogeneous Poisson Point Process. Moreover, we show that conditionally on the times and the effects of the catastrophes, the process 
satisfies the branching property. Thus,  it yields a particular class of CSBP in  random
 environment, which can also be obtained as the scaling limit of GW processes in random environment (see \cite{bansim}). Such processes are motivated in particular by a cell division
 model; see for instance \cite{MR2754402} and Section \ref{appli}. 
 
 We  also consider positive jumps that may represent   immigration events   proportional to the size of the current  population. Our motivation comes from  the aggregation behavior of some species. We refer to Chapter 12 in \cite{dan} for adaptive explanations of these aggregation behaviors, or \cite{rode2013join} which shows that aggregation behaviors may result from manipulation by parasites to increase their transmission. For convenience, we still call these dramatic events  catastrophes.

  The process $Y$ that we consider in this paper  is then  called  \emph{a CSBP with catastrophes}. Roughly speaking, it can be defined
  as follows: The process
$Y$ follows the SDE   (\ref{defCSBP}) between  catastrophes, which are then given in terms of  the jumps of a L\'evy process with bounded variation paths. Thus the set of times at which catastrophes occur may have accumulation points, but the mean effect of the catastrophes   has a finite first moment. When a catastrophe with effect  $m_t$ occurs at time $t$, we have
$$Y_{t}
=m_t Y_{t-}.$$  We defer the formal definitions to Section \ref{csbpc}. We also note that Brockwell has considered birth and death branching processes with  another kind of catastrophes, see e.g. \cite{Br}. 

 First we verify that  CSBP with catastrophes  are well defined as  solutions of a certain stochastic differential equation,  which we give as (\ref{EDS}). We   characterize their Laplace exponents via an  ordinary differential equation (see Theorem \ref{th1}), which allows us to describe  their long time behavior. In particular,
we prove an extinction criterion for the CSBP with catastrophes  which is given in terms of the sign of $\E[g+\sum_{s\leq 1}\log m_s ]$. We also establish a central limit theorem conditionally on  survival and  under some moment assumptions (Corollary \ref{TCL}). 

 We   then  focus on the case when the branching mechanism associated  to the CSBP with catastrophes $Y$ has the form $\psi(\lambda)=-g\lambda+c\lambda^{1+\beta}$, for $\beta\in (0, 1]$, i.e.  the stable case.  In this scenario,  the extinction and absorption events coincide, which  means that $\{\lim_{t \to \infty}Y_t=0\}=\{\exists t \geq 0, Y_t=0\}$. We prove that the speed of extinction is directly related to the asymptotic behavior  of   exponential  functionals of   L\'evy processes  (see Proposition \ref{prop}). More precisely, we show that the extinction probability of a stable CSBP with catastrophes can be expressed as follows:
\[ \P(Y_t>0)=\mathbb{E}\left[F\left(\int_0^t e^{-\beta K_s}\ud s\right)\right],
\]
where $F$ is a function with a particular asymptotic behavior and $K_t:=gt+\sum_{s\leq t}\log m_s $ is a L\'evy process of bounded variation that does 
not drift to $+\infty$ and satisfies  an exponential positive moment condition. We  establish the asymptotic behavior  of  the survival probability   
(see Theorem \ref{funcLev}) and  find four different regimes  when this probability is equal  to zero. Actually, such asymptotic behaviors have 
previously been found for branching processes in random environments in discrete time and space (see e.g. \cite{MR1821473, MR1983172, Afanasyev2005}). 
Here, the regimes depend  on the shape of the Laplace exponent of $K$, i.e. {on} the drift $g$ of the CSBP and the law of the catastrophes. The asymptotic 
behavior of exponential functionals of L\'evy processes drifting to $+\infty$ has been deeply studied by many authors, see for instance Bertoin and Yor 
\cite{MR2178044} and  references therein. To our knowledge, 
the remaining  cases have been studied only by  Carmona et al.  (see  Lemma 4.7 in \cite{MR1648657})  but their result focuses only on one regime. Our 
result is closely related to the discrete framework via the asymptotic behaviors of functionals
 of random walks. More precisely, we use in our arguments local limit theorems for semi direct products
 \cite{MR1443957, MR1821473} and some analytical results on random walks \cite{koz,MR1633937}, see Section \ref{proof}.   \\

 From the speed of extinction in the stable case, we can deduce the speed of extinction of a larger class of CSBP with catastrophes satisfying  the condition  that extinction and absorption coincide (see Corollary \ref{cor}). General results for  the case of L\'evy processes of unbounded variation do not seem easy to obtain since the existence of the process $Y$ and our {approximation} methods are  not so  easy to deduce. The particular case when 
  $\mu=0$ and the environment  $K$  is given  by a  Brownian motion  has been studied in \cite{BH}. The authors in \cite{BH}  also obtained similar asymptotics regimes using the explicit law of  $\int_0^t \exp(-\beta K_s)ds$. 

Finally, we apply  our results to a cell infection model introduced in  \cite{MR2754402} (see Section \ref{appli}). In this model, the infection in a cell line is given by  a Feller diffusion with catastrophes. We derive here the different possible speeds  of the infection {propagation}. More generally, these results can be related to  some ecological problems concerning  the role of environmental and demographical stochasticit{ies}. Such topics are fundamental in conservation biology, as discussed for instance in  Chapter 1 in \cite{LANDE}. Indeed, the survival of the population may be either due to the randomness of the individual reproduction, which {is} specified in our model  by the parameters $\sigma$ and $\mu$ of the CSBP,  or to the randomness (rate, size) of the catastrophes due to the environment. For a study of relative effects of environmental and demographical stochasticit{ies}, the reader is referred  to \cite{lande1993risks} and references therein.\\

The remainder of the paper is structured as follows. In Section 2, we define and  study the CSBP with catastrophes. Section \ref{speed} is devoted to  the study of the  extinction probabilities where special attention is given to the stable case. In Section \ref{proof}, we analyse the asymptotic behavior of  exponential  functionals of  L\'evy processes of {bounded variation}. This result is  the key to deducing the different extinction regimes. In Section \ref{appli}, we apply our results to a cell infection model.  Finally, Section \ref{annexe}  contains some technical results used in the proofs and deferred for the convenience of the reader.

\section{CSBP with catastrophes}
\label{csbpc}

We consider a CSBP $Z=(Z_t, t\ge 0)$ defined by (\ref{defCSBP}) and characterized by the triplet $(g, \sigma, \mu)$, where  we recall that $\mu$ satisfies
\begin{equation}
 \label{condmu} \int_0^\infty ({z} \land {z}^2) \mu(\ud {z})<\infty. \end{equation}
The catastrophes are independent of the process $Z$ and are given by a Poisson random  measure {$N_1=\sum_{i \in I}\delta_{t_i,m_{t_i}}$} on $[0,\infty)\times[0,\infty)$ with intensity  $\ud t\nu(\ud {m})$ such that
\begin{equation} \label{condh1}
\nu(\{0\})=0\qquad\textrm{and}\qquad 0 <\int_{(0,{\infty})} (1\land \big|{m}-1\big| ) \nu(\ud {m})<\infty.
\end{equation}
\noindent The jump process
$$\Delta_t=\int_0^t \int_{(0,\infty)}\log({m}) N_1(\ud s,\ud {m})= \sum_{s\leq t} \log(m_s) ,$$
 is thus a L\'evy process with paths of bounded variation, which is non identically zero.

The CSBP $(g,\sigma,\mu)$ with catastrophes $\nu$ is defined as the solution of the following stochastic differential equation:
\begin{eqnarray}\label{EDS}
Y_t=Y_0+\int_0^t gY_s \ud s +\int_0^t\sqrt{2\sigma^2 Y_s} \ud B_s
&+&\int_0^t\int_{[0,\infty)}\int_0^{Y_{s-}} z\widetilde{N}_0(\ud s, \ud z, \ud u)\nonumber \\ 
&+&\int_0^t\int_{[0,\infty)} \Big({m}-1\Big) Y_{s-}N_1(\ud s,\ud m), 
\end{eqnarray}
where $Y_0>0$ a.s.

Let $\mathcal{B}\mathcal{V}(\R_+)$ be the set of c\`adl\`ag functions on $\R_+:=[0,\infty)$ of bounded variation and   $C^2_b$ the set of all functions that are twice differentiable  and are bounded together with their derivatives, then the  following result of existence and unicity holds:

\begin{Thm} \label{th1} 
The  stochastic differential equation (\ref{EDS}) has a unique non-negative strong solution  $Y$  for any  $g\in \R,\sigma\ge 0$, $\mu$ and $\nu$ satisfying  conditions (\ref{condmu}) and (\ref{condh1}), respectively. Then, the process $Y=(Y_t, t\ge 0)$ is a c\`adl\`ag Markov process satisfying the branching property conditionally on $\Delta=(\Delta_t, t\ge 0)$ and  its  infinitesimal generator $\mathcal{A}$ satisfies for every $f\in C^2_b$
\begin{equation}\label{defgateur}
\begin{split}
\mathcal{A}f(x)&=gxf^\prime(x)+\sigma^2xf^{\prime\prime}(x)+\int_0^\infty \Big(f({m}x)-f(x)\Big)\nu(d{m})\\
&\hspace{4cm} +\int_0^\infty \Big(f(x+z)-f(x)-zf^\prime(x)\Big)x\mu(dz).
\end{split}
\end{equation}
Moreover, for every $t\geq 0$,
$$\E_y\left[\exp\Big\{-\lambda \exp\big\{-gt-\Delta_t\big\}Y_t\Big\}  \bigg| \ \Delta\right]=\exp\Big\{-yv_t(0,\lambda, \Delta)\Big\} \qquad \text{a.s.},$$
where for every $(\lambda,{\delta}) \in (\R_+,\mathcal{B}\mathcal{V}(\R_+))$, $v_t : s \in [0,t] \mapsto v_t(s, \lambda ,{\delta})$ is the unique solution of the following backward differential equation :
\be
\label{eqnv}
\frac{\partial}{\partial s} v_t(s,\lambda, {\delta})=e^{gs+ {\delta}_s}\psi_0\big(e^{-gs- {\delta}_s}v_{{t}}(s,\lambda,{\delta})\big), \qquad v_t(t,\lambda,{\delta})=\lambda, 
 \ee
and
\begin{equation} \label{defpsi0} \psi_0(\lambda)=\psi(\lambda)-\lambda \psi'(0)=\sigma^2\lambda^2+\int_0^{\infty} (e^{-\lambda {z}}-1+\lambda {z})\mu(d{z}). \end{equation}
\end{Thm}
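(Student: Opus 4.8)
The plan is to split the statement into its four assertions — strong existence and uniqueness, the Markov and quenched branching property, the generator (\ref{defgateur}), and the conditional Laplace identity — and to reduce the last three to a single change of variables that removes the environment from the dynamics.

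\textbf{Existence and uniqueness.} I would obtain the strong solution of (\ref{EDS}) by invoking the general theory of Fu and Li \cite{RePEc:eee:spapps:v:120:y:2010:i:3:p:306-330} for non-negative solutions of stochastic equations with jumps, which already yields (\ref{defCSBP}). Relative to the pure CSBP equation, the only new term is $\int_0^t\int_{[0,\infty)}(m-1)Y_{s-}N_1(ds,dm)$. Its coefficient $(x,m)\mapsto (m-1)x$ is linear, hence Lipschitz in $x$ with constant $|m-1|$, and condition (\ref{condh1}), namely $\int(1\wedge|m-1|)\nu(dm)<\infty$, provides exactly the integrability needed to control this integral (the small catastrophes, with $m$ near $1$, may have infinite activity but their effects are summable). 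The drift $x\mapsto gx$ is Lipschitz and the diffusion coefficient $\sqrt{2\sigma^2 x}$ satisfies the Yamada--Watanabe type bound $|\sqrt{x}-\sqrt{y}|^2\le|x-y|$, so the hypotheses of the Fu--Li existence and pathwise-uniqueness theorem are met. I would also isolate the finitely many large catastrophes $|m-1|\ge\varepsilon$ by an interlacing argument and treat the remaining small ones through the stochastic-integral estimates, checking that non-negativity is preserved since $m\ge 0$.

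\textbf{Removing the environment.} The core of the remaining assertions is the change of variables $\hat Y_t := e^{-gt-\Delta_t}Y_t = e^{-K_t}Y_t$, where $K_t=gt+\Delta_t$. Working conditionally on the $\sigma$-field generated by $N_1$ (equivalently on the path $\Delta=\delta$), so that $B$ and $N_0$ remain an independent Brownian motion and Poisson measure, an integration-by-parts computation shows that $\hat Y$ is continuous across the catastrophe times — at a jump, $Y_t=m_tY_{t-}$ is exactly compensated by $e^{-K_t}=m_t^{-1}e^{-K_{t-}}$ — and that its drift vanishes. Hence, conditionally on $\Delta$, the process $\hat Y$ solves a driftless CSBP-type equation with the time-inhomogeneous branching mechanism $u\mapsto e^{gs+\delta_s}\psi_0(e^{-gs-\delta_s}u)$; in particular $\hat Y$, and therefore $Y$, is Markov and inherits the branching property of a time-changed CSBP conditionally on $\Delta$. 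The generator (\ref{defgateur}) then follows from a direct application of It\^o's formula to $f(Y_t)$ for $f\in C^2_b$: the drift and diffusion give $gxf'(x)+\sigma^2xf''(x)$, the compensated $N_0$-integral produces the last term of (\ref{defgateur}) with its $-zf'(x)$ correction, and the $N_1$-integral gives $\int_0^\infty(f(mx)-f(x))\nu(dm)$, the latter being finite because $|f(mx)-f(x)|\le C\,(1\wedge|m-1|)$ under (\ref{condh1}).

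\textbf{The backward ODE.} Finally I would establish the Laplace identity by the standard martingale method. For fixed $\lambda$ and environment $\delta$, let $v_t(\cdot,\lambda,\delta)$ solve (\ref{eqnv}); since $\psi_0\ge 0$ is locally Lipschitz with $\psi_0(0)=0$, this backward equation is well posed and $v_t$ stays in $[0,\lambda]$ on $[0,t]$, hence bounded. Computing the time-inhomogeneous generator of $\hat Y$ on exponentials $x\mapsto e^{-wx}$ yields $x\,e^{K_s}\psi_0(we^{-K_s})e^{-wx}$ (using exactly the definition (\ref{defpsi0}) of $\psi_0$), so that the process $s\mapsto \exp\{-v_t(s,\lambda,\delta)\hat Y_s\}$ has vanishing It\^o drift precisely when $v_t$ satisfies (\ref{eqnv}); it is therefore a bounded, hence genuine, conditional martingale on $[0,t]$. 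Taking conditional expectations between $s=0$ and $s=t$, and using $v_t(t)=\lambda$, $\hat Y_0=Y_0=y$ and $\hat Y_t=e^{-gt-\Delta_t}Y_t$, gives the announced formula. The main obstacle is the existence and uniqueness step: verifying the Fu--Li hypotheses for the catastrophe term with possibly infinite activity and ensuring non-negativity and non-explosion, whereas the change of variables and the martingale computation, though requiring care in the quenched setting, are comparatively routine once the CSBP machinery is in place.
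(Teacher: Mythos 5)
Your proposal is correct and follows essentially the same route as the paper: existence and pathwise uniqueness via Fu--Li, the generator via It\^o's formula and the associated martingale problem, the quenched change of variables $\widetilde Z_t=e^{-gt-\Delta_t}Y_t$, and the martingale method with $F(s,x)=e^{-xv_t(s,\lambda,\Delta)}$ reducing everything to the backward equation (\ref{eqnv}). The only point you treat more lightly than the paper is the well-posedness of (\ref{eqnv}): since $s\mapsto e^{gs+\delta_s}$ is merely c\`adl\`ag with possibly accumulating jumps, the paper devotes a separate approximation argument (Proposition \ref{existun}, removing small jumps and passing to the limit via Gronwall) to this step, whereas you assert it from local Lipschitzness of $\psi_0$ alone.
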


\begin{proof}

Under Lipschitz conditions, the existence and uniqueness of strong solutions for  {stochastic differential equations} are classical results (see \cite{SDEsApp1}). In our case, the result follows from Proposition 2.2 and Theorems 3.2 and 5.1 in \cite{RePEc:eee:spapps:v:120:y:2010:i:3:p:306-330}. By It\^o's formula (see for instance \cite{SDEsApp1} Th.5.1),   the solution of the SDE (\ref{EDS}), $(Y_t, t\ge 0)$ solves the following  martingale problem.  For every {$f\in C^2_b$},
\[
\begin{split}
f(Y_t)&=f(Y_0)+\textrm{ loc. mart. }+g\int_0^tf^\prime(Y_s)Y_s \ud s\\
&+\sigma^2\int_0^tf^{\prime\prime}(Y_s) Y_s \ud s+\int_0^t\int_0^\infty Y_{s}\Big(f(Y_{s}+z)-f(Y_{s})-f^\prime(Y_s)z\Big)\mu(\ud z)\ud s\\
& +\int_0^t\int_0^\infty\Big(f({m}Y_{s})-f(Y_{s})\Big)\nu(\ud {m})\ud s ,\\
\end{split}
\]
where the local martingale is given by
\begin{eqnarray} \label{loc_mart}
&&\int_0^tf^{\prime}(Y_s) \sqrt{2\sigma^2Y_s} \ud B_s+\int_0^t\int_0^\infty\Big(f({m}Y_{s^-})-f(Y_{s^-})\Big)\widetilde{N}_1(\ud s, \ud {m}) \\ \nonumber
&&\hspace{1cm}+\int_0^t\int_0^\infty\int_{0}^{Y_{s-}}\Big(f(Y_{s-}+z)-f(Y_{s-})\Big)\widetilde{N}_0(\ud s, \ud z, \ud u), 
\end{eqnarray}
and $\widetilde{N}_1$ is the compensated measure of $N_1$.  Even though the process in (\ref{loc_mart}) is a local martingale,  we can define a localized 
version of the corresponding martingale problem as in Chapter 4.6 of Ethier and Kurtz \cite{EK}. We leave the details to the reader.
From pathwise uniqueness, we deduce that  the solution of $(\ref{EDS})$ is a strong Markov process whose generator is given by (\ref{defgateur}).

The branching property of $Y$, conditionally on $\Delta$,  is inherited from the branching property of the CSBP and the fact that the additional jumps are multiplicative. 

To prove the second part of the theorem, let us now work conditionally on $\Delta$. Applying It\^o's formula to the process $\widetilde{Z}_t=Y_t\exp\{-gt-\Delta_t\}$,  we obtain
 $$
 \widetilde{Z}_t=Y_0+\int_0^t e^{-gs-\Delta_s} \sqrt{2\sigma^2Y_s}\ud B_s+\int_0^t\int_0^\infty\int_0^{Y_{s-}} e^{-gs-\Delta_{s-}}z\widetilde{N}_0(\ud s, \ud z, \ud u),
$$
and then $\widetilde{Z}$ is a local martingale conditionally on $\Delta$. A new application of It\^o's formula ensures that for every $F\in C_b^{1,2}$, $F(t,\widetilde{Z}_t)$ is also  a local martingale if and only if for every $t \geq 0$,
\begin{eqnarray}\label{martlocF}
&\displaystyle\int_0^t & \frac{\partial^2}{\partial x^2}F(s,\widetilde{Z}_s)\sigma^2 \tilde{Z}_s e^{-gs-\Delta_s} \ud s+\int_0^t\frac{\partial}{\partial s}F(s,\widetilde{Z}_s) \ud s \\ 
&& \hspace{-0.5cm}+\int_0^t\int_0^\infty \tilde{Z}_{s}\Big(\Big[F(s,\widetilde{Z}_{s}+ze^{-gs-\Delta_{s}})-F(s,\widetilde{Z}_{s})\Big] e^{gs+\Delta_{s}} -\frac{\partial}{\partial x}F(s,\widetilde{Z}_s)z\Big)\mu(\ud z) \ud s =0.\nonumber
\end{eqnarray}
In the vein of \cite{SDEsApp1, MR2754402}, we  choose  $F(s,x):=\exp\{-xv_t(s,\lambda, \Delta)\}$, where  $v_t(s,\lambda,\Delta)$ is differentiable with respect to the variable $s$, non-negative and such that $v_t(t,\lambda,\Delta)=\lambda$, for $\lambda\ge 0$. The function $F$ is bounded, so that  $(F(s,\tilde{Z}_s), 0\le s\le t)$ will be a martingale if and only if  for every $s\in [0,   t]$
  \[
\frac{\partial}{\partial s}v_t(s,\lambda,\Delta)=e^{gs+\Delta_s} \psi_0\left(e^{-gs-\Delta_s}v_t(s,\lambda,\Delta)\right), \quad \text{a.s.},
 \]
  where $\psi_0$ is defined in (\ref{defpsi0}). 

Proposition \ref{existun} in  Section 6 ensures that a.s. the solution of this backward differential equation exists and is unique, which essentially 
comes from the  Lipschitz property of $\psi_0$ (Lemma \ref{lemv}) and 
the fact that $\Delta$ possesses bounded variation paths.
Then the process $(\exp\{-\tilde{Z}_s v_t(s,\lambda, \Delta)\}, 0\le s\le t)$ is a martingale conditionally on $\Delta$ and 
$$\E_y\left[\exp\Big\{-\tilde{Z}_t v_t(t,\lambda, \Delta)\Big\}  \bigg| \ \Delta\right]=\E_y\left[\exp\Big\{-\tilde{Z}_0 v_t(0,\lambda, \Delta)\Big\}  \bigg| \Delta\right] \quad \text{a.s.},$$
which yields
\begin{equation} \label{lap_exp} \E_y\left[\exp\Big\{-\lambda\tilde{Z}_t \Big\}  \bigg| \ \Delta\right]=\exp\Big\{-y v_t(0,\lambda, \Delta)\Big\} \quad \text{a.s.} \end{equation}
This implies our   result.
\end{proof}

Referring to Theorem 7.2 in \cite{MR2250061}, we recall that a L\'evy process has three possible asymptotic behaviors:  either it drifts to $\infty$,  $-\infty$, or oscillates a.s. In particular, if the L\'evy process  has a finite first moment, the sign of its expectation yields the regimes of above. We extend this classification to CSBP with catastrophes.

\begin{Cor} \label{cor1} We have the {following} three regimes. \\

i) If $(\Delta_t+gt)_{t \geq 0}$ drifts to $-\infty$, then $\P(Y_t\rightarrow 0 \ \vert \ \Delta)=1 $ a.s. \\
 
ii) If $(\Delta_t+gt)_{t \geq 0}$ oscillates, then $\P(\liminf_{t\rightarrow \infty} Y_t=0\ \vert \ \Delta)=1$ a.s. \\

iii) If $(\Delta_t+gt)_{t \geq 0}$ drifts to $+\infty$ and there exists $\varepsilon >0$, such that
\begin{equation} \label{xlogx} \int_{0}^{\infty} {z}\log^{{1+\epsilon}}(1+{z}) \mu(d{z})<\infty,
\end{equation}
then  $\P( \liminf_{t\to\infty} Y_t >0 \ \vert \ \Delta)>0$ a.s. and there exists a non-negative finite r.v. $W$ such that
 $$e^{-gt-\Delta_t}Y_t\xrightarrow[t\rightarrow \infty]{}W \quad a.s., \qquad \{W=0\}=\Big\{\lim_{t\to\infty}Y_t =0\Big\}.$$
\end{Cor}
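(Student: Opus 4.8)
The plan is to hang everything on the process $\tilde Z_t := e^{-gt-\Delta_t}Y_t$, which the proof of Theorem \ref{th1} already exhibits as, conditionally on $\Delta$, a nonnegative local martingale. Being nonnegative it is a supermartingale, hence it converges $\P(\cdot\mid\Delta)$-a.s. to a finite limit $W\geq 0$; this is the random variable appearing in (iii). The whole corollary then reduces, through the identity $Y_t=\tilde Z_t\,e^{gt+\Delta_t}$, to the interplay between the converging factor $\tilde Z_t$ and the factor $e^{gt+\Delta_t}$, whose asymptotics are governed by the trichotomy for $(\Delta_t+gt)$ recalled just before the statement.

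I would first dispose of (i) and (ii), which need nothing beyond the a.s. convergence of $\tilde Z$. In case (i), $gt+\Delta_t\to-\infty$ forces $e^{gt+\Delta_t}\to 0$, and since $\tilde Z_t\to W<\infty$ the product $Y_t\to 0$ a.s. In case (ii), oscillation gives $\liminf_{t}(gt+\Delta_t)=-\infty$, so along a sequence $t_n\uparrow\infty$ realizing this $\liminf$ one has $e^{gt_n+\Delta_{t_n}}\to 0$ while $\tilde Z_{t_n}\to W<\infty$, whence $\liminf_t Y_t=0$ a.s. Both conclusions hold conditionally on $\Delta$, for a.e.\ realization of $\Delta$.

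The substance is in (iii), where $gt+\Delta_t\to+\infty$. On $\{W>0\}$ one gets $Y_t\to+\infty$ immediately, so the first assertion $\P(\liminf_t Y_t>0\mid\Delta)>0$ follows once I show the nondegeneracy $\P(W>0\mid\Delta)>0$. I would prove this by upgrading $\tilde Z$ from a supermartingale to a conditionally uniformly integrable martingale, so that $\E[W\mid\Delta]=\tilde Z_0=y>0$. This is precisely the Kesten--Stigum / $x\log x$ phenomenon for CSBP, and condition (\ref{xlogx}) is exactly what supplies it: the large-jump part of $\mu$ is integrable enough (with the extra $\log^{\epsilon}$ margin) to control big reproduction events along the fluctuating drift $gt+\Delta_t$. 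Concretely I expect either a conditional $L\log L$ bound on $\tilde Z_t$, or an analysis of the monotone limit $v_\infty(0,\lambda,\Delta):=\lim_{t\to\infty}v_t(0,\lambda,\Delta)$ of the ODE (\ref{eqnv}) showing $\lim_{\lambda\to\infty}v_\infty(0,\lambda,\Delta)<\infty$, since $\P(W=0\mid\Delta)=\exp\{-y\lim_{\lambda\to\infty}v_\infty(0,\lambda,\Delta)\}$. Either route rests on the Lipschitz estimate for $\psi_0$ (Lemma \ref{lemv}) and on the bounded-variation control of $\Delta$.

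Finally, for the identity $\{W=0\}=\{\lim_t Y_t=0\}$, the inclusion $\{\lim_t Y_t=0\}\subseteq\{W=0\}$ is immediate since $\tilde Z_t=Y_te^{-gt-\Delta_t}$ and $e^{-gt-\Delta_t}\to 0$. For the reverse inclusion I would combine two ingredients, both conditional on $\Delta$: an extinction--explosion dichotomy ($Y_t\to 0$ or $Y_t\to\infty$ a.s.), obtained from the branching property of Theorem \ref{th1} together with the Laplace representation; and the fact, belonging to the same Kesten--Stigum circle as the nondegeneracy, that $\{Y_t\to\infty\}\subseteq\{W>0\}$ up to a $\P(\cdot\mid\Delta)$-null set. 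Together these force $\{W=0\}=\{\lim_t Y_t=0\}$, and a concrete implementation is to match $\P(W=0\mid\Delta)$ with $\P(\lim_t Y_t=0\mid\Delta)$ by comparing the monotone limits of $v_t(0,\lambda,\Delta)$ in $t$ and in $\lambda$. The principal obstacle throughout (iii) is this nondegeneracy/Kesten--Stigum step in a random environment: one must make the $x\log x$ control uniform over the realizations of $\Delta$, which is exactly why the slightly strengthened moment assumption (\ref{xlogx}), rather than the bare $\int z\log z\,\mu(\ud z)<\infty$, is imposed.
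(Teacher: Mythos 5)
Your overall architecture coincides with the paper's: $\tilde Z_t=e^{-gt-\Delta_t}Y_t$ is, conditionally on $\Delta$, a non-negative local martingale, hence a supermartingale converging a.s.\ to a finite limit $W$, which immediately settles (i) and (ii); part (iii) is then reduced to the nondegeneracy $\P(W>0\mid\Delta)>0$ together with the identification $\{W=0\}=\{\lim_{t\to\infty}Y_t=0\}$, which the paper obtains from the branching property and stochastic monotonicity (Lemma \ref{w0}) --- essentially the extinction--explosion dichotomy you describe. So the skeleton is right; the problem is that the one step carrying all the weight is left unproved and, as stated, partly misdirected.

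Concretely, at the nondegeneracy step: (a) your ODE alternative aims at the wrong inequality. Showing $\lim_{\lambda\to\infty}v_\infty(0,\lambda,\Delta)<\infty$ would give $\P(W=0\mid\Delta)>0$; what is needed is a positive \emph{lower} bound, e.g.\ $\inf_{t\ge 0}v_t(0,1,\Delta)>0$, so that $\E_y[e^{-W}\mid\Delta]<1$ via (\ref{lap_exp}). (b) The ingredient you invoke --- the Lipschitz estimate of Lemma \ref{lemv} --- cannot produce such a bound: it only gives $\psi_0(u)\le Cu$ on compacts, hence $-\ln v_t(0,1,\Delta)\le Ct$ from (\ref{eqnv}), which degenerates as $t\to\infty$. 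The paper's actual mechanism is Lemma \ref{exg}: condition (\ref{xlogx}) produces an increasing function $k$ with $\psi_0(u)\le u\,k(u)$ and $k(u)\asymp \log^{-(1+\varepsilon)}(1+1/u)$ near $0$, so that, since $gt+\Delta_t\to+\infty$ with finite mean and hence grows linearly, $c(\Delta)=\int_0^\infty k\big(e^{-(gs+\Delta_s)}\big)\,ds<\infty$ a.s.\ (the exponent $1+\varepsilon>1$ is exactly what makes $\int^\infty s^{-(1+\varepsilon)}\,ds$ converge, and is why the bare $z\log z$ condition is strengthened); Gr\"onwall then yields $v_t(0,1,\Delta)\ge e^{-c(\Delta)}>0$ uniformly in $t$. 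Your first alternative --- upgrading $\tilde Z$ to a conditionally uniformly integrable martingale with $\E[W\mid\Delta]=y$ --- would also suffice and is the classical Kesten--Stigum route, but it is not what the paper does, and you would still need to supply a conditional $L\log L$ estimate valid for a.e.\ realization of the environment, which is nowhere sketched. Until one of these two routes is actually carried out, the central claim of (iii) is unsupported.
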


\begin{Rque}
 In the regime $(ii)$, $Y$ may be absorbed in finite time a.s. (see the next section). But  $Y_t$ may also a.s. {do} not tend  to zero. For example, if $\mu=0$ and $\sigma=0$, then $Y_t=\exp(gt +\Delta_t)$ and
$\limsup_{t\rightarrow \infty} Y_t=\infty$.

Assumption  $(iii)$ of the corollary does not imply that
$\{\lim_{t\to\infty}Y_t =0\}=\{\exists t : Y_t=0\}.$
{Indeed}, the case  $\mu(dx)=x^{-2}\mathbf{1}_{ [0,1]}{(x)}dx$ inspired by Neveu's CSBP yields  $\psi(u)\sim u\log u$ as $u\to \infty$.  Then, according to Remark 2.2 in \cite{MR2466449}, $\P(\exists t : Y_t=0)=0$ and $0<\P(\lim_{t\to\infty}Y_t = 0)<1$.
\end{Rque}

\begin{proof}
We use (\ref{martlocF}) with $F(s,x)=x$ to get that $\tilde{Z}=(Y_t\exp(-gt-\Delta_t) : t\geq 0)$ is a non-negative local martingale. Thus it is a non-negative supermartingale and it
 converges a.s. to a non-negative finite random variable $W$. This implies the proofs of (i-ii).
 
In the case when {$(gt+\Delta_t,t \geq 0)$} goes to $+\infty$, we  prove  that $\P(W>0 \ \vert \ \Delta)>0$ a.s. 
According to Lemma \ref{exg} in Section 6,  the assumptions of (iii) ensure the existence of a non-negative increasing function ${k}$ on $\R^+$ such that for all $\lambda>0,$
$$ \psi_0(\lambda)\leq \lambda {k}(\lambda) \quad \text{and} \quad c(\Delta):=\int_0^\infty {k}\Big( e^{-(gt+\Delta_t)} \Big) dt<\infty \quad \textrm{a.s.}$$
For every $(t,\lambda) \in (\R^*_+)^2$, the solution $v_t$ of (\ref{eqnv}) {is non-decreasing on} $[0,t]$. Thus for all $s\in [0,t]$, $v_t(s,1,\Delta) \leq 1$, and
\[
 \begin{split}
  \psi_0(e^{-gs- \Delta_s}v_t(s,1,\Delta)) 
  &  \leq e^{-gs- \Delta_s}v_t(s,1,\Delta){k}(e^{-gs- \Delta_s}v_t(s,1,\Delta))\\
 & \leq e^{-gs- \Delta_s}v_t(s,1,\Delta){k}(e^{-gs- \Delta_s}) \qquad  \text{a.s}.
 \end{split}
 \]
 Then {(\ref{eqnv}) gives}
 $$ \frac{\partial}{\partial s} v_t(s,1,\Delta) \leq v_t(s,1,\Delta){k}(e^{-gs- \Delta_s}),
 $$ 
implying
$$-\ln(v_t(0,1,\Delta))\leq  \int_0^{t} {k}(e^{-gs- \Delta_s})ds\leq c(\Delta)<\infty \quad \text{ a.s.}$$
 Hence,  for every $t\geq 0$,  $v_t(0,1,\Delta)\geq \exp(-c(\Delta))>0$ and {conditionally} on $\Delta$ there exists a positive lower bound for $v_t(0,1,\Delta)$.
 Finally from (\ref{lap_exp}), 
$$\E_y[\exp\{- W\} \ \vert \Delta]=\exp \Big\{-y \underset{t \to \infty}{\lim}v_t(0,1,\Delta)  \Big\}<1$$
and $\P(W>0 \ \vert \ \Delta)>0 \ \text{a.s}.$ \\
Moreover, since $Y$ satisfies the branching property conditionally on $\Delta$, we can show  (see Lemma \ref{w0} in Section 6) that
$$\{W=0\} =\Big\{  \lim_{t\to\infty}Y_t=0\Big\} \qquad \text{ a.s.,}$$
which completes the proof.
 \end{proof}
We now derive a central limit theorem in the supercritical regime:
\begin{Cor}\label{TCL}
Assume that {$(gt+\Delta_t,t \geq 0)$} drifts to $+\infty$ and (\ref{xlogx}) is satisfied. Then, under the additional assumption
\begin{equation}\label{CLT}
\int_{(0,e^{-1}]\cup[e,\infty)} (\log {m})^2 \nu(\ud {m}) <\infty,
\end{equation}
 conditionally on $\{ W>0 \}$, 
$$\frac{\log (Y_t) -\textbf{m}t}{ \rho\sqrt{t}} \xrightarrow[t\to \infty]{d} {\mathcal{N}}(0,1),$$
where $\xrightarrow{d}$ means convergence in distribution,
\[
\textbf{m}:=g+\int_{\{|\log x |\ge 1\}} \log {m} \hspace{.05cm} \nu(\ud {m})<\infty,\qquad{\bf \rho}^2:=\int_{0}^\infty (\log {m})^2\nu(\ud {m})<\infty,
\]
 and ${\mathcal{N}}(0,1)$ denotes a centered Gaussian random variable with variance equals 1. 
\end{Cor}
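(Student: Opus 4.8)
The plan is to reduce the statement to a central limit theorem for the L\'evy environment $K_t:=gt+\Delta_t$ and to absorb the remaining randomness into the martingale limit $W$. Since we work in the regime of Corollary~\ref{cor1}(iii), the process $\widetilde{Z}_t:=e^{-(gt+\Delta_t)}Y_t$ converges almost surely to the finite variable $W$, with $\{W=0\}=\{\lim_{t}Y_t=0\}$. Writing
\[
\frac{\log Y_t-\textbf{m}t}{\rho\sqrt{t}}=\frac{K_t-\textbf{m}t}{\rho\sqrt{t}}+\frac{\log\widetilde{Z}_t}{\rho\sqrt{t}},
\]
I note that on $\{W>0\}$ one has $\log\widetilde{Z}_t\to\log W\in\R$ almost surely, so the second term tends to $0$. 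By Slutsky's lemma applied under the conditional law $\P(\,\cdot\mid W>0)$, it then suffices to prove that, conditionally on $\{W>0\}$,
\[
\phi_t:=\frac{K_t-\textbf{m}t}{\rho\sqrt{t}}\xrightarrow{d}\mathcal{N}(0,1).
\]

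First I would prove the unconditional convergence. The process $K$ is a L\'evy process of bounded variation, and conditions (\ref{condh1}) and (\ref{CLT}) ensure that its increment $K_1=g+\Delta_1$ has a finite second moment, since $\int(\log m)^2\nu(\ud m)<\infty$: the mass near $m=1$ is integrable by (\ref{condh1}) because there $(\log m)^2$ is comparable to $(m-1)^2\le|m-1|$, while the tails are controlled by (\ref{CLT}). One then identifies $\textbf{m}$ and $\rho^2$ as the mean and the variance of $K_1$. Applying the classical central limit theorem to the i.i.d.\ increments $(K_n-K_{n-1})_{n\ge1}$ along the integer skeleton, and observing that the fractional part $K_t-K_{\lfloor t\rfloor}$ becomes negligible after division by $\sqrt{t}$, yields $\phi_t\xrightarrow{d}\mathcal{N}(0,1)$.

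The hard part will be to transfer this to the law conditioned on survival, namely to show $\E[h(\phi_t)\mid W>0]\to\E[h(N)]$ for every bounded Lipschitz $h$, where $N\sim\mathcal{N}(0,1)$. I would combine two facts. The fluctuation $\phi_t$ is asymptotically independent of any fixed $\mathcal{F}_{t_0}$: from the splitting $\phi_t=\tfrac{K_{t_0}-\textbf{m}t_0}{\rho\sqrt{t}}+\tfrac{(K_t-K_{t_0})-\textbf{m}(t-t_0)}{\rho\sqrt{t}}$, the first term vanishes almost surely while the second is independent of $\mathcal{F}_{t_0}$ and converges to $\mathcal{N}(0,1)$, so that $\E[h(\phi_t)G]\to\E[h(N)]\,\E[G]$ for any bounded $\mathcal{F}_{t_0}$-measurable $G$. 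On the other hand, $\{W>0\}\in\mathcal{F}_\infty$, hence the martingale $G_{t_0}:=\E[\ind_{\{W>0\}}\mid\mathcal{F}_{t_0}]$ converges to $\ind_{\{W>0\}}$ in $L^1$ as $t_0\to\infty$. Decomposing
\[
\E\big[h(\phi_t)\ind_{\{W>0\}}\big]=\E\big[h(\phi_t)G_{t_0}\big]+\E\big[h(\phi_t)(\ind_{\{W>0\}}-G_{t_0})\big],
\]
letting $t\to\infty$ in the first term (for fixed $t_0$) and then $t_0\to\infty$ in the remainder gives $\E[h(\phi_t)\ind_{\{W>0\}}]\to\E[h(N)]\,\P(W>0)$, which is the desired conditional central limit theorem. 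The genuine obstacle here is exactly this decorrelation: the survival event $\{W>0\}$ is measurable only with respect to $\mathcal{F}_\infty$ and not with respect to any $\mathcal{F}_{t_0}$, so one cannot condition on it directly, and it is the martingale approximation of $\ind_{\{W>0\}}$ by $\mathcal{F}_{t_0}$-measurable variables, together with the asymptotic loss of memory of the L\'evy fluctuations, that makes the argument go through.
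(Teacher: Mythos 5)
Your proof is correct and follows the same skeleton as the paper's: decompose $\log Y_t=(gt+\Delta_t)+\log\widetilde Z_t$, use Corollary \ref{cor1}(iii) to kill the second term on $\{W>0\}$ after division by $\sqrt t$, and reduce to a CLT for the L\'evy process $K_t=gt+\Delta_t$. You diverge from the paper in two places, both to your advantage. First, the paper obtains the CLT for $K$ by invoking Theorem 3.5 of Doney and Maller \cite{MR1922446} and verifying its hypotheses in Section \ref{subTCL} (including a separate characteristic-function argument when the jump measure has bounded support); under the finite-variance hypothesis (\ref{CLT}) your integer-skeleton reduction to the classical i.i.d.\ CLT, with the crude bound $|K_t-K_{\lfloor t\rfloor}|\le|\gamma|+\sigma^{(+)}_{\lceil t\rceil}-\sigma^{(+)}_{\lfloor t\rfloor}+\sigma^{(-)}_{\lceil t\rceil}-\sigma^{(-)}_{\lfloor t\rfloor}$, is shorter and entirely elementary; what the paper's route buys is a framework that would survive weakening (\ref{CLT}) to a domain-of-attraction condition. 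Second, and more importantly, the paper's proof ends with ``we conclude using (\ref{TLC1})'', silently passing from the unconditional CLT for $K_t$ to convergence under $\P(\,\cdot\mid W>0)$; as you rightly point out, this is not automatic since $\{W>0\}\in\mathcal{F}_\infty$ could a priori be correlated with the fluctuations of $K$. Your decorrelation argument --- approximating $\ind_{\{W>0\}}$ in $L^1$ by $G_{t_0}=\E[\ind_{\{W>0\}}\mid\mathcal{F}_{t_0}]$ and using that $\phi_t$ asymptotically forgets $\mathcal{F}_{t_0}$ --- is exactly the missing step, and it is sound. One caveat on the centering constant: you identify $\textbf{m}=\E[K_1]=g+\int_0^\infty\log m\,\nu(\ud m)$, which is what the paper's own Section \ref{subTCL} derives ($\textbf{m}=g+\int z\,\eta(\ud z)$), but the displayed formula in the statement of Corollary \ref{TCL} truncates the integral to $\{|\log m|\ge 1\}$; these differ in general, and the truncated version appears to be a typo in the statement, so you should flag that your proof establishes the CLT with the untruncated mean.
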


\begin{proof}
We {use} the central limit theorem for the L\'evy process $(gt+\Delta_t, t\ge 0)$ under assumption (\ref{CLT})
of  Doney and Maller \cite{MR1922446}, see Theorem 3.5.  For simplicity,  the details are deferred to Section \ref{subTCL}. We then get
\begin{equation}\label{TLC1}
\frac{gt+\Delta_t-\textbf{m}t}{\rho \sqrt{t}}\xrightarrow[t\to \infty]{d}{\mathcal{N}}(0,1).
\end{equation}
From Corollary \ref{cor1} part $iii)$, under the event $\{ W>0 \}$, we get 
$$\log Y_t -(gt+\Delta_t)\xrightarrow[t\to \infty]{a.s.}\log  W  \in (-\infty,\infty),$$
 and we conclude  using (\ref{TLC1}).
\end{proof}

\section{Speed of extinction of  CSBP with catastrophes} 
\label{speed}
In this section, we {first} study   the particular case of the stable CSBP with growth  $g \in \R$.  Then, we derive a similar result for another class of CSBP's. 
\subsection{The stable case}
\label{stable}
We assume in this section that 
\begin{equation}\label{defstable}\psi(\lambda)={-}g\lambda+ c_+\lambda^{\beta+1},\end{equation}
for some $\beta \in (0,1]$,  $c_+>0$ and $g$ in $\R$.

 If $\beta=1$ (i.e. the Feller diffusion), we necessarily have  $\mu=0$ and the CSBP $Z$ follows the continuous diffusion 
$$Z_t=Z_0+\int_0^t gZ_s \ud s +\int_0^t\sqrt{2\sigma^2 Z_s} \ud B_s  , \quad t \geq 0.$$
In the case when  $\beta \in (0,1)$,  we necessarily have  $\sigma=0$ and the measure $\mu$ takes the form $\mu(\ud x)={c_+} (\beta+1) x^{-(2+\beta)}\ud x/\Gamma(1-\beta)$. In other words, the process possesses positive jumps with infinite intensity \cite{MR2299923}. Moreover,
$$Z_t=Z_0+\int_0^t gZ_s \ud s + \int_0^t Z_{s^-}^{1/(\beta+1)} \ud X_s{ , \quad t \geq 0},$$
where $X$ is a $(\beta+1)$-stable spectrally positive L\'evy process.\\

For the stable CSBP with catastrophes,  the backward differential equation (\ref{eqnv}) can be solved  and in particular, we get
\begin{Prop}  \label{prop}
For all $x_0> 0$ and $t\geq 0$:
\begin{equation} \label{pabs}
 \P_{x_0}(Y_t>0 \  \vert \  \Delta)=1-\exp\left\{- x_0 \left( c_+ {\beta} \int_0^t e^{-\beta({gs+\Delta_s})}\ud s \right)^{-1/\beta}\right\} \qquad \text{a.s}. 
\end{equation}
Moreover,
$$\P_{x_0}(\textrm{there exists } t>0, \  Y_t=0  \  \vert  \  \Delta )=1  \qquad \textrm{a.s.},$$
 if and only if the process {$(gt+\Delta_t, t\ge0)$}  does not drift to $+\infty$.
\end{Prop}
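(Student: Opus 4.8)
The plan is to exploit the explicit solvability of the backward differential equation \eqref{eqnv} in the stable case, where $\psi_0(\lambda) = c_+\lambda^{\beta+1}$. First I would substitute this form of $\psi_0$ into \eqref{eqnv}, working conditionally on $\Delta$, to obtain the ordinary differential equation
\[
\frac{\partial}{\partial s} v_t(s,\lambda,\Delta) = e^{gs+\Delta_s}\, c_+ \left(e^{-gs-\Delta_s} v_t(s,\lambda,\Delta)\right)^{\beta+1} = c_+\, e^{-\beta(gs+\Delta_s)}\, v_t(s,\lambda,\Delta)^{\beta+1},
\]
with terminal condition $v_t(t,\lambda,\Delta)=\lambda$. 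This is a separable (Bernoulli-type) equation: setting $w = v_t^{-\beta}$ linearizes it, giving $\frac{\partial}{\partial s} w = -\beta c_+ e^{-\beta(gs+\Delta_s)}$, which integrates directly. Solving backward from $s=t$ to $s=0$ and using the terminal condition yields a closed form for $v_t(0,\lambda,\Delta)$ in terms of $\lambda$ and the exponential functional $\int_0^t e^{-\beta(gs+\Delta_s)}\,\d s$. To recover the extinction probability I would then take $\lambda \to \infty$ in the Laplace identity \eqref{lap_exp}, since $\P_{x_0}(Y_t = 0 \mid \Delta) = \lim_{\lambda\to\infty}\E_{x_0}[\exp\{-\lambda \tilde Z_t\}\mid \Delta]$ (note that $\tilde Z_t = 0$ iff $Y_t = 0$), which produces formula \eqref{pabs}.

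For the second statement I would first rewrite the exponent in \eqref{pabs}: almost surely
\[
\P_{x_0}(\textrm{there exists } t>0,\ Y_t=0 \mid \Delta) = \P_{x_0}\Big(\lim_{t\to\infty} Y_t = 0 \Big| \Delta\Big) = 1 - \exp\left\{-x_0\Big(c_+\beta \int_0^\infty e^{-\beta(gs+\Delta_s)}\,\d s\Big)^{-1/\beta}\right\},
\]
using monotonicity of $t\mapsto Y_t=0$ and the fact that in the stable case extinction and absorption coincide (as already asserted in the introduction). This equals $1$ almost surely precisely when the integral $\int_0^\infty e^{-\beta(gs+\Delta_s)}\,\d s = \infty$ a.s., i.e.\ when the exponential functional of the L\'evy process $K_s := gs+\Delta_s$ diverges.

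The crux is therefore to show that $\int_0^\infty e^{-\beta K_s}\,\d s = \infty$ a.s.\ if and only if $K$ does not drift to $+\infty$. I would argue as follows. If $K$ drifts to $+\infty$, then $K_s/s \to \E[K_1] > 0$ (or more generally $K_s \to +\infty$), so $e^{-\beta K_s}$ decays and the integral converges a.s., giving a strictly positive survival probability and hence $\P(\exists t: Y_t=0 \mid \Delta) < 1$. Conversely, if $K$ oscillates or drifts to $-\infty$, then $\liminf_{s\to\infty} K_s = -\infty$ a.s., so $e^{-\beta K_s}$ returns to arbitrarily large values on a sequence of times; combined with the bounded-variation path structure and right-continuity, one can show the integral diverges (this is a standard dichotomy for exponential functionals of L\'evy processes, and the convergence/divergence is governed exactly by the long-run behavior of $K$). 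I expect this last equivalence to be the main obstacle, though it follows the well-known criterion that $\int_0^\infty e^{-K_s}\,\d s < \infty$ a.s.\ iff $K_s \to +\infty$ a.s.; here I can also appeal directly to Corollary \ref{cor1}, whose three regimes already match the trichotomy for $K$ and whose statement (iii) identifies $\{\lim_t Y_t = 0\}$ with the event $\{W=0\}$ carrying positive probability exactly in the drift-to-$+\infty$ case.
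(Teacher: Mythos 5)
Your proposal is correct and follows essentially the same route as the paper: explicit integration of the backward equation \eqref{eqnv} with $\psi_0(\lambda)=c_+\lambda^{\beta+1}$ (the paper does the same ``direct integration'' your Bernoulli substitution makes explicit), then $\lambda\to\infty$ in \eqref{lap_exp}, a monotone limit in $t$, and finally the dichotomy for $\int_0^\infty e^{-\beta K_s}\,\ud s$, which the paper settles by citing Theorem 1 of Bertoin--Yor \cite{MR2178044} rather than sketching it directly as you do. No gaps worth flagging.
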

\begin{proof} 
 Since $\psi_0(\lambda)=c_+\lambda^{\beta+1}$, a direct integration gives us
  \[
 v_{t}(u,\lambda, \Delta)=\left[c_+{\beta}\int_u^{t}e^{-\beta({gs+\Delta_s})} \ud s+\lambda^{-\beta}\right]^{-1/\beta},
 \]
which implies
\begin{equation} \label{trans_lap}
  \E_{x_0}\Big[e^{-\lambda \tilde{Z}_{t}}  \Big  \vert  \  \Delta  \Big]= \exp\left\{- x_0 \left( c_+{\beta} \int_0^{t} e^{-\beta({gs+\Delta_s})}\ud s+\lambda^{-\beta} \right)^{-1/\beta}\right\} \quad \text{a.s}. 
\end{equation}
Hence,   the absorption probability follows by letting  $\lambda$ tend to $\infty$
in  (\ref{trans_lap}). In other words,  
\[
 \P_{x_0}(Y_{t}=0  \  \vert  \  \Delta )= \exp\left\{- x_0 \left( c_+{\beta}\int_0^{t} e^{-\beta({gs+\Delta_s})}\ud s\right)^{-1/\beta}\right\}  \quad \text{a.s}.
\] 
Since $\P_{x_0}(\textrm{there exists } t\geq 0 : Y_{t}=0  \  \vert  \  \Delta )=\lim_{t\rightarrow \infty}\P_{x_0}(Y_{t}=0  \  \vert  \  \Delta )$ a.s., we deduce  
\[\P_{x_0}(\textrm{there exists } t\geq 0 : Y_{t}=0  \  \vert  \  \Delta )= \exp\left\{- x_0 \left( c_+{\beta}\int_0^{\infty} e^{-\beta({gs+\Delta_s})}\ud s\right)^{-1/\beta}\right\}  \quad \text{a.s}.\]
Finally, according to Theorem 1 in \cite{MR2178044},   
$ \int_0^\infty \exp\{-\beta({gs+\Delta_s})\}\ud s=\infty$  a.s. if and only if  the process $({gt+\Delta_t}, t\ge0)$ does not drift to $+\infty$. This completes  the proof.
\end{proof}

In what follows, we assume that the L\'evy process $\Delta$ admits some positive exponential moments, i.e. there exists $\lambda>0$ such that $\phi(\lambda)<\infty$.
We can then define $ \theta_{max}=\sup\{ \lambda>0,\,\phi(\lambda)<\infty \} \in (0,\infty]$ and we have
\be
 \label{condexp}
 \phi(\lambda):=\log\E[e^{\lambda \Delta_1}]=\int_0^{\infty}  (m^{\lambda}-1)\nu(dm) <\infty \qquad \text{ for } \lambda \in[ 0, \theta_{max}).
 \ee
We note that $\phi$ can be differentiated on the right in $0$  and also  in $1$ if $\theta_{max}>1$: 
$$\phi'(0):=\phi'(0+)=\int_0^{\infty}  \log(m) \nu(dm) \in (-\infty,\infty), \qquad \phi'(1)=\int_0^{\infty}  \log(m)m \nu(dm).$$

Recall that  $\Delta_t/t$ converges  to $\phi'(0)$ a.s. and  that $g+\phi'(0)$  is negative in the subcritical case. {Proposition \ref{prop}} then yields the asymptotic behavior of  the quenched  survival probability :
$$e^{-gt-\Delta_t} \P_{x_0}({ Y_t>0}  \vert  \  \Delta )\sim x_0\Big(c_+\beta\int_0^{t}e^{\beta({gt+\Delta_t-gs-\Delta_s})}ds\Big)^{-1/\beta} \quad (t\rightarrow \infty),$$
which converges in distribution to a positive finite limit  proportional to $x_0$. Then, 
$$\frac{1}{t} \log \P_{x_0}({ Y_t>0}  \vert  \  \Delta ) \rightarrow g+\phi'(0) \qquad (t\rightarrow \infty)$$
in probability.

Additional  work is required to get the asymptotic behavior of the annealed survival probability, for which four different regimes appear when  the process a.s. goes to zero:
\begin{Prop} \label{ppal_result}
We assume that $\nu$ satisfies (\ref{condh1}) and (\ref{CLT}), and that $\psi$ and $\phi$ satisfy (\ref{defstable}) and (\ref{condexp}) respectively.
\begin{enumerate}
\item[a/]
 If $\phi'(0)+g<0$ \emph{(subcritical case)} and $\theta_{max}>1$, then 
\begin{enumerate}
 \item[(i)] If $\phi'(1)+g<0$  \emph{(strongly subcritical regime)}, then there exists $c_1>0$ such that for every $x_0>0$,
$$ \P_{x_0}(Y_t>0) \sim c_1  x_0 e^{t(\phi(1)+g)}, \qquad \textrm{ as }\quad t\rightarrow \infty.$$

 \item[(ii)] If $\phi'(1)+g=0$ \emph{(intermediate subcritical regime)}, then there exists $c_2>0$ such that for every $x_0>0$,
$$\P_{x_0}(Y_t>0) \sim c_2 x_0 t^{-1/2}e^{t(\phi(1)+g)}, \qquad \textrm{ as }\quad t\rightarrow \infty.$$

 \item[(iii)] If $\phi'(1)+g>0$   \emph{(weakly subcritical regime)} and $\theta_{max}>\beta+1$, then for every $x_0>0$, there exists $c_3(x_0)>0$ such that
$$\P_{x_0}(Y_t>0) \sim c_3(x_0)t^{-3/2}e^{t(\phi(\tau)+g\tau)}, \qquad \textrm{ as }\quad t\rightarrow \infty, $$
where $\tau$ is the root of $\phi'+g$ on $]0,1[${: $\phi(\tau)+g\tau= \underset{0 < s < 1}{\min}\{\phi(s)+gs\}$. }

\end{enumerate}
\item[b/] If  $\phi'(0)+g=0$
 \emph{(critical case)} and  $\theta_{max}>\beta$, then for every $x_0>0$, there exists $c_4(x_0)>0$ such that 
$$\P_{x_0}(Y_t>0) \sim c_4(x_0)t^{-1/2}, \qquad \textrm{ as }\quad t\rightarrow \infty. $$

\end{enumerate}
\end{Prop}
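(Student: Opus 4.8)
The plan is to read $\P_{x_0}(Y_t>0)$ off the exact quenched formula of Proposition~\ref{prop} and to reduce the statement to a large-time estimate for a functional of the L\'evy process $K_t:=gt+\Delta_t$. Writing $\Phi(\theta):=\log\E[e^{\theta K_1}]=g\theta+\phi(\theta)$ for the (convex) Laplace exponent of $K$, Proposition~\ref{prop} gives, for each fixed $t$,
\[
\P_{x_0}(Y_t>0)=\E\big[F(I_t)\big],\qquad I_t:=\int_0^t e^{-\beta K_s}\,\ud s,\quad F(a):=1-\exp\Big\{-x_0\big(c_+\beta a\big)^{-1/\beta}\Big\},
\]
with $F$ continuous, non-increasing, $0\le F\le1$, $F(0+)=1$ and $F(a)\sim x_0(c_+\beta)^{-1/\beta}a^{-1/\beta}$ as $a\to\infty$. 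Since $\Phi'(0)=g+\phi'(0)\le0$ in the critical and subcritical cases, $K$ does not drift to $+\infty$, so $I_t\uparrow\infty$ a.s. by Theorem~1 in \cite{MR2178044} and $\P_{x_0}(Y_t>0)\to0$. The integral $I_t$ is governed by the running minimum $\underline K_t:=\inf_{0\le s\le t}K_s$, in that $I_t\asymp e^{-\beta\underline K_t}$ up to factors subexponential in $t$; hence $F(I_t)\approx 1-\exp\{-C\,e^{\underline K_t}\}$ and
\[
\P_{x_0}(Y_t>0)\ \approx\ \E\big[\,1-\exp\{-C\,e^{\underline K_t}\}\,\big],\qquad C=x_0(c_+\beta)^{-1/\beta}.
\]
This is the exact continuous analogue of the classical representation of the survival probability of a branching process in random environment through the minimum of its associated random walk, and I would isolate its rigorous form as Theorem~\ref{funcLev}.

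The four regimes are then produced by a Cram\'er (Esscher) change of measure tuned to the minimiser $\theta^*$ of $\Phi$ on $[0,1]$, using $\ud\P^{(\theta)}/\ud\P|_{\mathcal{F}_t}=e^{\theta K_t-t\Phi(\theta)}$, under which $K$ has drift $\Phi'(\theta)$. In the \emph{strongly subcritical} regime $\Phi'(1)=g+\phi'(1)<0$, so $\theta^*=1$: the rate is $e^{t\Phi(1)}=e^{t(g+\phi(1))}$, the condition $\theta_{max}>1$ makes $\Phi(1)$ finite, and under $\P^{(1)}$ the process $K$ still drifts to $-\infty$, so its minimum sits near the endpoint, endpoint domination is exact, and a dominated-convergence argument yields $e^{-t\Phi(1)}\P_{x_0}(Y_t>0)\to c_1x_0$. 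In the \emph{intermediate} case $\Phi'(1)=0$ the tilt is still $\theta^*=1$ but $K$ is centred under $\P^{(1)}$; the extra $t^{-1/2}$ comes from a local central limit theorem for the position of this centred process near a fixed level.

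The \emph{weakly subcritical} and \emph{critical} regimes are where I expect the main difficulty. When $\Phi'(1)>0$ the minimiser $\theta^*=\tau$ is the interior root of $\Phi'=\phi'+g$ on $(0,1)$, giving the rate $e^{t(\phi(\tau)+g\tau)}$; tilting at $\tau$ centres $K$, but now $\underline K_t$ is attained at an interior time rather than at an endpoint, so the estimate must simultaneously control the walk before and after its minimum. This is exactly the setting of the local limit theorems for semidirect products and of the staying-positive/renewal estimates for centred walks of \cite{koz,MR1633937,MR1443957,MR1821473}, which combine to give the $t^{-3/2}$ factor; the stronger hypothesis $\theta_{max}>\beta+1$ guarantees finiteness of the tilted exponential functionals that arise because $F$ decays only with index $1/\beta$. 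The critical case $\Phi'(0)=0$ is the degenerate tilt $\theta^*=0$ (no change of measure): $K$ is already centred, the dominant contribution to $\E[1-\exp\{-Ce^{\underline K_t}\}]$ comes from the event $\{\underline K_t=O(1)\}$ that $K$ stays high, whose probability is of order $t^{-1/2}$ by Sparre-Andersen/Spitzer theory, yielding the rate $t^{-1/2}$ under $\theta_{max}>\beta$.

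The genuine technical core --- which I would place in the proof of Theorem~\ref{funcLev} rather than here --- is to turn the minimum heuristic $I_t\asymp e^{-\beta\underline K_t}$ into exact constants, and in particular to obtain the weakly subcritical $t^{-3/2}$ rigorously: this needs sharp local limit theorems for the two-dimensional renewal/semidirect-product structure $(K_s,I_s)$, together with uniform control of the nonlinear, bounded $F$ across the crossover between its $a^{-1/\beta}$ tail and its value $1$ near $0$. Managing this crossover uniformly in $t$ while respecting the $\beta$-dependent moment thresholds is the delicate point; once Theorem~\ref{funcLev} is available, reading off the four asymptotics of Proposition~\ref{ppal_result} is immediate.
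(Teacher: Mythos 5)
Your proposal is correct and follows essentially the same route as the paper: Proposition~\ref{prop} reduces the survival probability to $\E\big[F\big(\int_0^t e^{-\beta K_s}\,\ud s\big)\big]$ with $F(a)=1-\exp\{-x_0(c_+\beta a)^{-1/\beta}\}$ and $K_s=gs+\Delta_s$, one checks that $F$ satisfies the tail condition (\ref{defF}), and the four regimes are then read off from the key asymptotic result on exponential functionals (Theorem~\ref{funcLev}), proved exactly with the Esscher tilts at $1$, $\tau$ or $0$ and the local limit theorems of \cite{koz,MR1633937,MR1443957,MR1821473} that you cite. The only cosmetic difference is that the paper's proof of Theorem~\ref{funcLev} handles the functional directly through a time-discretization into the affine recursion $A_{p,q}$ and the Guivarc'h--Liu theorem, rather than through the running-minimum representation you use as a heuristic --- a distinction you yourself flag as belonging to the proof of the key theorem, not of this proposition.
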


\begin{proof}From  Proposition \ref{prop} we know that
\[
\P_{x_0}(Y_t>0)=1-\E \left[ \exp\left\{- x_0 \left( c_+ {\beta}\int_0^t e^{-\beta({gs+\Delta_s})}\ud s \right)^{-1/\beta}\right\} \right] = \E \left[ F\left( \int_0^t e^{-\beta K_s}\ud s \right) \right],
\]
where $F(x)=1-\exp\{-x_0(c_+{\beta} x)^{-1/\beta}\}$ and $ K_s=\Delta_s+gs.$ The {function} $F$ satisfies assumption (\ref{defF}) which is required in 
Theorem \ref{funcLev} (which is stated and proved in the next section). Hence Proposition \ref{ppal_result} follows from a direct application of this Theorem.
\end{proof}

In the case of CSBP's without catastrophes ($\nu=0$), the subcritical regime is reduced to  (i), and the critical case differs from b/, since  the asymptotic behavior is given by $1/t$. \\
In the strongly and intermediate subcritical cases $(i)$ and $(ii)$, $\E [Y_t] $  provides the  exponential decay factor of the survival 
probability which is given by  $\phi(1)+g$. 
Moreover the probability of non-extinction is proportional to the initial state $x_0$ of the population.
We refer to the proof of Lemma \ref{inter} and Section \ref{proofthp} for more details.\\
In the weakly subcritical case $(iii)$, the survival probability {decays} exponentially with rate  $\phi(\tau)+g\tau$, { which} is  strictly smaller than  $\phi(1)+g$.
In fact, as it appears in the proof of Theorem \ref{funcLev}, the quantity which determines   
the asymptotic behavior in all cases is $\E[\exp\{{\inf_{s \in [0,t]}(\Delta_s+gs)}\}]$. 
We also note  that $c_3$ and $c_4$ {may} not {be proportional to} $x_0$. We refer to \cite{MR2532207}
for a result in this vein for discrete branching processes in random environment. 

 More generally, the results stated above can be compared to the results which appear in the literature of discrete (time and space) branching processes in random environment (BPRE),
see e.g. \cite{MR1821473, MR1983172, Afanasyev2005}.
A BPRE $(X_n, n \in \N)$ is an integer valued branching process, specified by a sequence of generating functions $(f_n, n \in \N)$. Conditionally on the
 environment, individuals reproduce independently of each other and the offsprings of an individual at generation $n$ has generating function $f_n$.
We present briefly the results of Theorem 1.1 in \cite{MR1967796} and Theorems 1.1, 
1.2 and 1.3 in \cite{MR1983172}. To lighten the presentation, we do not specify here the moment conditions. \\
 In the {\it subcritical case}, i.e. when $\E[\log f_0'(1)]<0 $, we have the following three asymptotic regimes as $n$ increases,
$$\P(X_n>0) \sim ca_n, \qquad \textrm{as }\quad n\to\infty,$$
where  $c$ is a positive constant and $a_n$ is given by $$a_n=\E\Big[f_0'(1)\Big]^n, \quad a_n=n^{-1/2}\E\Big[f_0'(1)\Big]^n \quad \text{or} \quad a_n=n^{-3/2} \left(\underset{0 < s < 1}{\min}\E\Big[(f_0'(1))^s \Big]\right)^n,$$ 
when $\E[f_0'(1)\log f_0'(1)]$ is negative, zero or positive, respectively. \\
In the {\it critical case}, i.e. $ \E[\log f_0'(1)]=0 $, we have 
$$\P(X_n>0)\sim c n^{-1/2}, \qquad \textrm{as }\quad n\to\infty,$$
for some positive constant $c$. In the particular case when $\beta=1$, these results on BPRE and the approximation techniques  implemented in Section {\ref{demothm}} can be used to get  {Proposition \ref{ppal_result}}. We refer to {R}emarks \ref{th12GKV} and \ref{th11GKV} for more details. 

Finally, in the continuous framework, such results have been established  for the Feller diffusion case, i.e. $\beta=1$,  whose drift varies following a Brownian motion (see \cite{BH}).
In other words the process $K$ is given by  a Brownian motion plus a drift. The techniques used by  the authors rely on an explicit formula for the Laplace transform of exponential functionals of Brownian motion which we cannot find in the literature for the case of  L\'evy processes. These results have been completed in the surpercritical regime in \cite{MR2868599}.

\subsection{Beyond the stable case.}
In this section, we  prove a similar result to {Proposition} \ref{ppal_result} for CSBP's with catastrophes in the case when the branching mechanism  $\psi_0$ is not stable. For technical reasons, we assume that  the Brownian coefficient is positive and  the associated L\'evy measure $\mu$ satisfies a second moment condition.

\begin{Cor}\label{cor} Assume that (\ref{condexp}) holds and
 $$\int_{(0,\infty)} {z}^2\mu(d{z})<\infty,  \qquad \sigma^2>0, \qquad  \int_{(0,\infty)}(\log {m})^2\nu(d{m})<\infty.$$
 \begin{enumerate}
  \item[a/]
{If} $\phi'(0)+g<0$  and $\theta_{max}>1$, {then} 
\begin{enumerate}
\item[(i)] If $\phi'(1)+g<0$, there exist $0<c_1\leq c_1'<\infty$ such that for every $x_0$,
$$ c_1 x_0  e^{t(\phi(1)+g)}\leq \P_{x_0}(Y_t>0) \leq c_1' x_0  e^{t(\phi(1)+g)}  \qquad  \text{for sufficiently large } t.$$
 \item[(ii)] If  $\phi'(1)+g=0$, there exist $0<c_2\leq c_2'<\infty$ such that for every $x_0$,
$$c_2 x_0  t^{-1/2}e^{t(\phi(1)+g)}\leq \P_{x_0}(Y_t>0) \leq c_2' x_0  t^{-1/2}e^{t(\phi(1)+g)}  \quad  \text{for sufficiently large }  t.$$
 \item[(iii)] If $\phi'(1)+g>0$ and $\theta_{max}>\beta+1$, for every $x_0$, there exist  $0<c_3(x_0)\leq c_3'(x_0)<\infty$ such that
$$ c_3(x_0)t^{-3/2}e^{t(\phi(\tau)+g\tau)}\leq\P_{x_0}(Y_t>0) \leq c_3'(x_0)t^{-3/2}e^{t(\phi(\tau)+g\tau)}  \quad {(t > 0)}, $$
where $\tau$ is the root of $\phi'+g$ on $]0,1[$.
\end{enumerate}

\item[b/] If $\phi'(0)+g=0$  and $\theta_{max}>\beta$, then  for every $x_0$, there exist  $0<c_4(x_0)<{c_4'(x_0)}<\infty$ such that
$$c_4(x_0)t^{-1/2}\leq \P_{x_0}(Y_t>0) \leq  c_4'(x_0)t^{-1/2}  \quad {(t > 0)}.$$
 \end{enumerate}
\end{Cor}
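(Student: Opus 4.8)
The strategy is a sandwich argument. Under the stated hypotheses the branching mechanism $\psi_0$ is squeezed between two quadratic (that is, $\beta=1$ stable) branching mechanisms, and the quenched survival probability is monotone with respect to $\psi_0$; Proposition \ref{ppal_result} applied to the two bounding stable CSBP with catastrophes then furnishes matching asymptotics up to multiplicative constants. First I would establish the two-sided polynomial control of $\psi_0$. Recall from (\ref{defpsi0}) that $\psi_0(\lambda)=\sigma^2\lambda^2+\int_0^\infty(e^{-\lambda z}-1+\lambda z)\mu(\ud z)$. Convexity of $z\mapsto e^{-z}$ gives $0\le e^{-\lambda z}-1+\lambda z\le \tfrac12\lambda^2 z^2$, so that, with $c_-:=\sigma^2$ and $c_+:=\sigma^2+\tfrac12\int_0^\infty z^2\mu(\ud z)$, one has $c_-\lambda^2\le\psi_0(\lambda)\le c_+\lambda^2$ for every $\lambda\ge0$. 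Here $\sigma^2>0$ is essential for the lower bound at large $\lambda$ (without a Brownian part $\psi_0$ grows only linearly at infinity), and $\int z^2\mu(\ud z)<\infty$ is what makes $c_+$ finite; the regime of large $\lambda$ is precisely the one governing absorption, since extinction corresponds to $\lambda\to\infty$ in (\ref{eqnv}).

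Next I would transfer this ordering to the solutions of the backward equation (\ref{eqnv}). Denote by $v_t^\pm$ the solutions associated with the mechanisms $\lambda\mapsto c_\pm\lambda^2$ and by $v_t$ the one associated with $\psi_0$, all sharing terminal value $\lambda$. Since the vector field $(s,v)\mapsto e^{gs+\delta_s}\psi_0(e^{-gs-\delta_s}v)$ is nondecreasing in $\psi_0$ for $v\ge0$, a Gronwall-type comparison for this backward ODE yields, conditionally on $\Delta$ and for every finite $\lambda$, $v_t^+(0,\lambda,\Delta)\le v_t(0,\lambda,\Delta)\le v_t^-(0,\lambda,\Delta)$. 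Letting $\lambda\to\infty$ along the monotone limit and invoking Proposition \ref{prop} for the two bounding stable processes, I obtain the quenched sandwich
\[
\P_{x_0}^{(c_+)}(Y_t>0\mid\Delta)\le \P_{x_0}(Y_t>0\mid\Delta)\le \P_{x_0}^{(c_-)}(Y_t>0\mid\Delta)\qquad\text{a.s.},
\]
where $\P^{(c_\pm)}$ denotes the law of the stable CSBP with catastrophes of branching constant $c_\pm$ (and the same $g$ and $\nu$). Taking expectations over $\Delta$ preserves the inequalities and produces the corresponding annealed sandwich.

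Finally I would apply Proposition \ref{ppal_result} with exponent $\beta=1$ to each side. Both bounding processes satisfy (\ref{condh1}), (\ref{CLT}) (which follows from $\int(\log m)^2\nu(\ud m)<\infty$) and (\ref{condexp}); the thresholds $\theta_{max}>\beta+1$ and $\theta_{max}>\beta$ in cases (iii) and (b) are exactly those demanded by Proposition \ref{ppal_result} when applied to the two bounding mechanisms (of exponent $\beta=1$). The key point is that the regime (fixed by the signs of $\phi'(0)+g$ and $\phi'(1)+g$), the exponential rates $\phi(1)+g$ and $\phi(\tau)+g\tau$, the minimiser $\tau$, and the polynomial corrections $t^{-1/2}$ and $t^{-3/2}$ all depend only on $g$ and $\phi$, and not on the constant $c_\pm$; only the leading multiplicative constants do. Hence the upper and lower bounds carry identical $t$-dependence and differ solely through $c_i$ versus $c_i'$, which is precisely the asserted form. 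In cases (i)--(ii) this gives the estimate for large $t$, while in cases (iii)--(b) one extends it to all $t>0$ by noting that $t\mapsto\P_{x_0}(Y_t>0)$ is continuous and valued strictly in $(0,1)$ on any compact interval, so the constants can be readjusted.

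The main obstacle is the backward comparison principle with the singular terminal datum: one must verify that the ODE ordering survives the passage $\lambda\to\infty$ and that the absorption exponent $\lim_{\lambda\to\infty}v_t(0,\lambda,\Delta)$ is finite and well defined. This is guaranteed because the two quadratic bounds admit finite, explicitly computable absorption exponents by Proposition \ref{prop}, so the squeezed exponent is trapped between two finite quantities; the remaining work is the routine but careful justification of the monotone dependence of the solution of (\ref{eqnv}) on $\psi_0$.
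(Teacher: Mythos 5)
Your proposal is correct and follows the same overall strategy as the paper: squeeze $\psi_0$ between the two quadratic mechanisms $\sigma^2\lambda^2$ and $\bigl(\sigma^2+\tfrac12\int_0^\infty z^2\mu(\ud z)\bigr)\lambda^2$, deduce the corresponding ordering of the Laplace exponents and hence of the survival probabilities for two Feller-type CSBPs driven by the \emph{same} catastrophes, and conclude by applying Proposition \ref{ppal_result} with $\beta=1$ to the two bounding processes, whose rates and polynomial corrections depend only on $g$ and $\phi$. The one place where you genuinely diverge is the comparison step: the paper first assumes $\nu(0,\infty)<\infty$, compares the flows $u'=-\psi(u)$ between the (then discrete) catastrophe times while observing that the multiplicative jumps preserve the order, and then removes the restriction on $\nu$ by a separate two-stage approximation (Section \ref{details}); you instead invoke a direct backward comparison principle for (\ref{eqnv}) with a general $\delta\in\mathcal{BV}(\R_+)$, which is legitimate in view of Proposition \ref{existun} and the Lipschitz bound of Lemma \ref{lemv} (applied for each fixed terminal value $\lambda$, then letting $\lambda\to\infty$ along the monotone limit), and which would shortcut the paper's approximation argument entirely. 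One caveat: your closing remark that the estimates in cases (iii) and b/ extend to all $t>0$ by continuity only works for the upper bounds; since $t^{-3/2}$ and $t^{-1/2}$ blow up at $0$ while $\P_{x_0}(Y_t>0)\le 1$, the lower bounds with positive constants can only hold for $t$ bounded away from $0$ --- but this is a defect of the statement as printed, which the paper's own proof does not address either.
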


Note that the assumption $\sigma^2>0$ is only required for the upper bounds.

\begin{proof}
We recall that the branching mechanism associated with the CSBP $Z$ satisfies (\ref{defpsi}) for every $\lambda\geq 0$.
So for every $\lambda\geq 0$,
$$2\sigma^2\leq \psi''(\lambda) =  2\sigma^2 + \int_{(0,\infty)}{z}^2e^{-\lambda {z}}\mu(d {z}).$$
Since 
$c:=\int_0^{\infty} {z}^2 \mu(d{z})<\infty,$ 
$\psi''$ is continuous on $[0,\infty)$. By  Taylor-Lagrange's Theorem,
we get for every $\lambda \geq 0$, $\psi_{-}(\lambda)\leq \psi(\lambda)\leq \psi_{+}(\lambda),$
where
$$\psi_{-}(\lambda)=\lambda\psi'(0)+\sigma^2 \lambda^2\quad \text{and} \quad \psi_{+}(\lambda)= \lambda\psi'(0)+(\sigma^2+c/2) \lambda^2.$$

We first consider the case $\nu(0,\infty)<\infty$, so that $\Delta$ has a finite number of jumps on each compact interval a.s., and we also introduce the 
CSBP's with catastrophes $Y^-$ and $Y^+$ which have  the same 
catastrophes $\Delta$ as $Y$,  but with the characteristics $(g,\sigma^2,0)$ and $(g,\sigma^2+c/2,0)$, respectively. We denote 
$u_{-,t}$ and $u_{+,t}$ for  their respective Laplace exponent, in other words for all $(\lambda,t) \in \R_+^2$,
$$\E\Big[\exp\{-\lambda Y^-_t\}\Big]=\exp\{-u_{-,t}(\lambda)\}, \qquad \E\Big[\exp\{-\lambda Y^+_t\}\Big]=\exp\{-u_{+,t}(\lambda)\}.$$
Thus conditionally on $\Delta$, for every time $t$ such that $\Delta_{t}=\Delta_{t-}$,  we deduce, thanks to Theorem \ref{th1}, the following identities
 $$u_{-,t}'(\lambda)=-\psi_-(u_{-,t}), \qquad u_{+,t}'(\lambda)=-\psi_+(u_{+,t}), \qquad u_{t}'(\lambda)=-\psi(u_t).$$
Moreover for every $t$ such that $\theta_t=\exp\{\Delta_{t}-\Delta_{t-}\}{\neq 1}$, 
$$\frac{u_{-,t}(\lambda)}{u_{-,t-}(\lambda)}=\frac{u_{t}(\lambda)}{u_{t-}(\lambda)}=\frac{u_{+,t}(\lambda)}{u_{+,t-}(\lambda)}=\theta_t,$$
and $u_{-,0}(\lambda)=u_{0}(\lambda)=u_{+,0}(\lambda)=\lambda.$
So for all $t,\lambda$, we have
$$u_{+,t}(\lambda) \leq u(t,\lambda) \leq u_{-,t}(\lambda).$$
The extension   of the above  inequality to the case $\nu(0,\infty) \in [0 , \infty]$ can be achieved by successive approximations. We defer the technical details to Section \ref{details}. \\
Having {into} account that  the above inequality holds in general, we deduce,  taking $\lambda\rightarrow \infty$, that 
$$\P(Y^{+}_t >0) \leq \P(Y_t>0)\leq \P(Y^{-}_t >0). $$
The result then follows from  the asymptotic behavior of $\P(Y^{-}_t >0)$ and $\P(Y^{+}_t >0)$, which are inherited from  Proposition \ref{ppal_result}. 
\end{proof}

\section{Local limit theorem for some functionals of L\'evy processes}\label{demothm}
\label{proof}

 We proved in Proposition \ref{prop} that the probability that a stable CSBP with catastrophes becomes extinct at time $t$ equals the expectation of a functional of a L\'evy process. 
We now prove the key result of the paper. It deals with the asymptotic behavior of the mean of some L\'evy functionals.\\
More precisely, we are interested in the  asymptotic behavior at infinity of
$$a_F(t):=\E\left[F\left(\int_0^{t} \exp\{-\beta K_s\}\ud s\right)\right],$$
where $K$ is a L\'evy process with bounded variation paths and $F$ belongs to a 
particular class of functions on $\R_+$. We will focus on functions which decrease  polynomially at infinity  (with exponent 
$-1/\beta$). The motivations come from the previous section. {In particular, the Proposition \ref{ppal_result} is a direct application of Theorem \ref{funcLev}}.

Thus, we consider a L\'evy process $K=(K_t,t\ge 0)$ of the form
\begin{equation}\label{defK}
K_t=\gamma t+\sigma^{(+)}_t-\sigma^{(-)}_t, \qquad t\ge 0,
\end{equation}
where $\gamma$ is a real constant, $\sigma^{(+)}$ and $\sigma^{(-)}$ are two independent {pure jump} subordinators. 
We denote  by $\Pi$, $\Pi^{(+)}$ and $\Pi^{(-)}$  the associated  L\'evy measures of $K$, $\sigma^{(+)}$ and $\sigma^{(-)}$, respectively.  We also define the Laplace exponents of $K$, $\sigma^{(+)}$ and $\sigma^{(-)}$ by 
\begin{equation}\label{defphi} \phi_K(\lambda)=\log\E\Big[e^{\lambda K_1}\Big], \quad {\phi_K^+}(\lambda)=\log \E\Big[e^{\lambda \sigma_1^{(+)}}\Big] \quad\text{ and }\quad {\phi_K^-}(\lambda)=\log \E\Big[e^{-\lambda \sigma_1^{(-)}}\Big], \end{equation}
 and assume that 
\begin{equation}\label{theta}
 \theta_{max} = {\sup} \left\{ \lambda \in \R^+, \int_{[1,\infty)} e^{\lambda x}  \Pi^{(+)}(\ud x)<\infty \right\}>0.
\end{equation}
From the  L\'evy-Khintchine formula, we deduce
\[
\phi_K(\lambda)=\gamma \lambda+\int_{(0,\infty)}\Big(e^{\lambda x}-1\Big)\Pi^{(+)}(\ud x)+\int_{(0,\infty)}\Big(e^{-\lambda x}-1\Big)\Pi^{(-)}(\ud x).
\]
Finally, we assume that $\E[K^2_1]<\infty$, which is equivalent to
\begin{equation}\label{mmt2}
 \int_{(-\infty,\infty)}x^2 \Pi(dx)<\infty.
\end{equation}

\begin{Thm}
\label{funcLev}  Assume that (\ref{defK}), (\ref{theta})  and (\ref{mmt2}) hold.  Let $\beta \in (0,1]$ and $F$ be a positive non increasing function such that for $x\ge 0$ 
\begin{equation}
\label{defF} F(x)=C_F (x+1)^{-1/\beta}\Big[ 1+(1+x)^{-\varsigma}h(x) \Big] ,
\end{equation}
where $\varsigma\geq 1$, $C_F$ is a positive constant, and $h$ is a Lipschitz function which is bounded.
\begin{enumerate}
 \item[a/]  If  $\phi_K'(0)<0$ 
\begin{itemize}
\item[(i)] If $\theta_{max}>1$ and $\phi_K'(1)<0$, there exists a positive constant $c_1$ such that 
$$ a_F(t) \sim c_1 e^{t\phi_K(1)}, \quad \textrm{ as }\quad t\rightarrow \infty.$$
\item[(ii)] If $\theta_{max}>1$ and $\phi_K'(1)=0$, there exists a positive constant $c_2$ such that 
$$ a_F(t) \sim c_2 t^{-1/2} e^{t\phi_K(1)}, \quad \textrm{ as }\quad t\rightarrow \infty.$$
\item[(iii)] If $\theta_{max} > \beta+1$ and $\phi_K'(1)>0$, there exists a positive constant $c_3$ such that 
$$ a_F(t) \sim c_3 t^{-3/2} e^{t\phi_K(\tau)}, \quad \textrm{ as }\quad t\rightarrow \infty,$$
where $\tau$ is the root of $\phi_K'$ on $]0,1[$.
\end{itemize}
\item[b/] If $\theta_{max}>\beta$ and $\phi_K'(0)= 0$, there exists a positive constant $c_4$ such that 
$$ a_F(t) \sim c_4 t^{-1/2} , \quad \textrm{ as }\quad t\rightarrow \infty.$$
\end{enumerate}
\end{Thm}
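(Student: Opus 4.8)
The plan is to strip off the exponential growth of the functional by an Esscher transform and then to read the polynomial correction off the fluctuation theory of the tilted process. Write $I_t=\int_0^t e^{-\beta K_s}\ud s$ and $m_t=\inf_{0\le s\le t}K_s$, so that $I_t=e^{-\beta m_t}J_t$ with $J_t=\int_0^t e^{-\beta(K_s-m_t)}\ud s$. Since $F(x)\sim C_F x^{-1/\beta}$ while $F$ stays bounded, the location of the dominant trajectories of $\E[F(I_t)]$ is controlled by the running infimum $m_t$, which is the heuristic content of ``$\E[\exp(\inf_{s\le t}K_s)]$ being the determining quantity''. For a parameter $\theta^\star\in[0,1]$ with $\phi_K(\theta^\star)<\infty$ I would use the change of measure $\ud\P^{(\theta^\star)}/\ud\P|_{\mathcal F_t}=\exp\{\theta^\star K_t-t\phi_K(\theta^\star)\}$, under which $K$ is again a Lévy process of bounded variation with mean $\phi_K'(\theta^\star)$, and write
\be
a_F(t)=e^{t\phi_K(\theta^\star)}\,\E^{(\theta^\star)}\!\big[e^{-\theta^\star K_t}F(I_t)\big].
\ee
The whole point is to choose $\theta^\star$ so that the tilted process no longer drifts to $-\infty$: namely $\theta^\star=1$ when $\phi_K'(1)\le 0$, $\theta^\star=\tau$ (the root of $\phi_K'$ in $(0,1)$, which exists by convexity when $\phi_K'(0)<0<\phi_K'(1)$) in the weakly subcritical case, and $\theta^\star=0$ in the critical case. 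The moment hypothesis (\ref{theta}) with $\theta_{max}>1,\ >\beta+1,\ >\beta$ is exactly what makes these tilts admissible and, more importantly, guarantees integrability of the limiting exponential functional below.

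Next I would analyse the tilted expectation regime by regime, the polynomial rate coming entirely from the fluctuation behaviour of the now non-negatively-drifting process. In the strongly subcritical regime $\theta^\star=1$ and $K$ still drifts to $-\infty$ under $\P^{(1)}$; then $I_t\to\int_0^\infty e^{-\beta K_s}\ud s$, and using the time-reversal identity $K_t-m_t\stackrel{d}{=}\sup_{s\le t}K_s$ together with a.s.\ convergence of the reflected process to its stationary law, the integrand $e^{-K_t}F(I_t)$ converges in $L^1(\P^{(1)})$ to a positive constant, giving $a_F(t)\sim c_1 e^{t\phi_K(1)}$. In the intermediate regime $\theta^\star=1$ but $\P^{(1)}$ is centred; here $I_t$ is still large on the dominant trajectories, so $F(I_t)\approx C_F e^{m_t}J_t^{-1/\beta}$ and $e^{-K_t}F(I_t)\approx C_F e^{-(K_t-m_t)}J_t^{-1/\beta}$, whose expectation decays like $t^{-1/2}$ because $\E^{(1)}[e^{-(K_t-m_t)}]\asymp t^{-1/2}$ is the persistence probability of a centred finite-variance process. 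In both cases the tail constant $C_F$ governs the answer, which explains why the limiting constants are proportional to $x_0$ in the application. In the critical regime $\theta^\star=0$: now the dominant trajectories are those on which $K$ stays non-negative, so $I_t$ stays of order one and $F(I_t)$ converges to $F(I_\infty)$ with $I_\infty$ a functional of the process conditioned to stay positive; since staying positive up to time $t$ has probability $\asymp t^{-1/2}$, this yields $a_F(t)\sim c_4 t^{-1/2}$, and because $F$ is evaluated at $O(1)$ arguments its whole shape, not just $C_F$, enters $c_4$, consistently with the stated $x_0$-dependence.

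The weakly subcritical regime is the hardest step and the main obstacle. Here $\theta^\star=\tau\in(0,1)$, the tilted process is centred, and the weight $e^{-\tau K_t}$ together with the requirement that $I_t$ remain $O(1)$ forces the dominant trajectories both to stay positive and to return to a neighbourhood of the origin at time $t$: a bridge-type staying-positive event whose probability is $\asymp t^{-3/2}$. Extracting this power sharply, with its constant, requires a precise local limit theorem for the pair (random walk, exponential functional): I would discretise $K$ at integer times, absorb the bounded within-step fluctuations of the continuous functional into a multiplicative correction, and invoke the local limit theorems for semidirect products and the random-walk staying-positive asymptotics cited in the introduction. The decomposition of the path at its global minimum into conditionally independent pre- and post-minimum pieces, treated by those tools, is what produces the exponent $-3/2$; the moment condition $\theta_{max}>\beta+1$ ensures that under $\P^{(\tau)}$ the process keeps exponential moments of order $>\beta$, so that $I_\infty$ has finite $(-1/\beta)$-moment and $c_3$ is finite and positive.

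Finally, throughout I would control the discrepancy between $F(x)$ and $C_F(x+1)^{-1/\beta}$ by means of $\varsigma\ge 1$ and the boundedness of the Lipschitz function $h$, verifying in each regime that the correction term contributes at a strictly faster rate and therefore leaves the stated asymptotics unchanged. The overall scheme is thus uniform (tilt, then fluctuation theory), while the four different exponents $t^{0},t^{-1/2},t^{-3/2},t^{-1/2}$ reflect whether the tilted process has negative or zero drift and whether the dominant trajectories keep $I_t$ large (so that only the tail of $F$ matters) or of order one (so that all of $F$ enters the constant).
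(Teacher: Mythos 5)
Your proposal follows essentially the same route as the paper: an Esscher transform with a time-reversal and direct convergence argument in the strongly subcritical case, and, for the remaining regimes, a discretization of the exponential functional combined with the local limit theorem for semidirect products (Guivarc'h--Liu) and the Kozlov/Hirano random-walk fluctuation asymptotics, with the $F$-versus-$C_F x^{-1/\beta}$ discrepancy controlled by a higher-moment estimate. The only under-elaborated point is that in regimes (ii) and b/ your written argument only yields the order $t^{-1/2}$ rather than a sharp constant, but the same semidirect-product local limit theorem you invoke for (iii) supplies the constants there too, exactly as in the paper's Lemmas on the discretized functional.
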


 This  result generalizes Lemma 4.7 in Carmona et al. \cite{MR1648657}  in the case when the process $K$ has bounded variation paths.  More precisely, the authors in  \cite{MR1648657} only provide a precise asymptotic behavior in the case  when $\phi_K'(1)< 0$. 

The assumption on the behavior of $F$ as $x\rightarrow \infty$ is finely used to get the asymptotic behavior of $a_F(t)$. Lemma \ref{lemtecaussi(quel_nom_original...)} gives the  properties of $F$ which are required in the proof.

The strongly subcritical case (case (i)) is proved using a continuous time change of measure (see Section \ref{proofthp}). 
For the remaining  cases, we  divide the proof in three steps. The first one (see Lemma \ref{discret}) consists in  
discretizing the exponential functional $\int_0^{t} \exp(-\beta K_s)\ud s$ using the random variables 
\begin{equation}\label{Apq}
 A_{p,q}=\underset{i=0}{\overset{p}{\sum}}\exp\{-\beta K_{i/q}\}=  \sum_{i=0}^p \prod_{j=0}^{i-1} \exp\Big\{-\beta \big(K_{{(j+1)}/q}-K_{j/q}\big)\Big\} \quad   ((p,q) \in \N\times \N^*).
\end{equation}
Secondly (see Lemmas \ref{inter}, \ref{weak} and \ref{crit}), we study the asymptotic behavior of the discretized expectation 
\begin{equation}\label{Fpq}F_{p,q}:= \E \Big[F \Big( A_{p,q}/q\Big) \Big]  \quad (q \in \N^*), \end{equation}
when $p$ goes to infinity. This  step relies
 on Theorem 2.1 in \cite{MR1821473}, which  is a limit theorem for random walks on an affine group  
 and  generalizes theorems A and B in
 \cite{MR1443957}.\\
Finally (see Sections \ref{conv_const} and \ref{proofthp}), we prove that the limit of  $F_{\lfloor qt\rfloor,q}$, when $q\to \infty$, and  $a_F(t)$ both 
have the same asymptotic behavior when $t$ goes to infinity.

\subsection{Discretization of the L\'evy process}

The following result, which follows from the property of independent and stationary increments of the process $K$, allows us to concentrate on $A_{p,q},$ 
which has been defined in (\ref{Apq}).

\begin{Lem}\label{discret}
Let $t \geq 1$ and $q \in \N^*$. Then 
$$\frac{1}{q}e^{-\beta(|{\gamma}|/q+\sigma^{(+)}_{1/q})} A^{(1)}_{\lfloor qt \rfloor-1,q}\leq  \int_0^t e^{-\beta K_s}\ud s  \leq \frac{1}{q}e^{\beta (|{\gamma}|/q+\sigma^{(-)}_{1/q})}A^{(2)}_{\lfloor qt \rfloor,q} ,$$
where for every $(p,q) \in \N \times \N^*$,  
$\sigma^{(+)}_{1/q}$ (resp $\sigma^{(-)}_{1/q}$) is independent of $A^{(1)}_{p,q}$ (resp $A^{(2)}_{p,q}$) and $$A_{p,q}\overset{(d)}{=}A^{(1)}_{p,q}\overset{(d)}{=}A^{(2)}_{p,q}.$$
\end{Lem}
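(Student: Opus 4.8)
The goal is to sandwich the exponential functional $\int_0^t e^{-\beta K_s}\,\ud s$ between discretized Riemann-type sums built from the values of $K$ on the grid $\{i/q : i \in \N\}$. The basic idea is to split the time integral over the $\lfloor qt\rfloor$ subintervals $[i/q,(i+1)/q)$ and, on each such interval, control $K_s$ from above and below using the value of $K$ at a grid point plus the fluctuation of $K$ across that single mesh interval of length $1/q$.

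First I would decompose $\int_0^t e^{-\beta K_s}\,\ud s \geq \int_0^{\lfloor qt\rfloor/q} e^{-\beta K_s}\,\ud s = \sum_{i=0}^{\lfloor qt\rfloor-1}\int_{i/q}^{(i+1)/q} e^{-\beta K_s}\,\ud s$ for the lower bound, and similarly bound $\int_0^t e^{-\beta K_s}\,\ud s \leq \int_0^{(\lfloor qt\rfloor+1)/q} e^{-\beta K_s}\,\ud s$ for the upper bound. On each mesh interval I would write $K_s = K_{i/q} + (K_s - K_{i/q})$ and use the decomposition $K_s - K_{i/q} = \gamma(s - i/q) + (\sigma^{(+)}_s - \sigma^{(+)}_{i/q}) - (\sigma^{(-)}_s - \sigma^{(-)}_{i/q})$ coming from (\ref{defK}). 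Since $\sigma^{(+)}$ and $\sigma^{(-)}$ are nondecreasing, the increment $K_s - K_{i/q}$ over $s \in [i/q,(i+1)/q)$ is bounded below by $-|\gamma|/q - (\sigma^{(-)}_{(i+1)/q}-\sigma^{(-)}_{i/q})$ and above by $|\gamma|/q + (\sigma^{(+)}_{(i+1)/q}-\sigma^{(+)}_{i/q})$. This yields $e^{-\beta K_s} \geq e^{-\beta K_{i/q}}\,e^{-\beta(|\gamma|/q + \Delta^{(+)}_i)}$ on the interval (and the reverse with $\Delta^{(-)}_i$), where $\Delta^{(\pm)}_i$ denotes the increment of $\sigma^{(\pm)}$ over $[i/q,(i+1)/q)$. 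Integrating over the length-$1/q$ interval produces the factor $1/q$.

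The key structural point is that, by stationarity and independence of the increments of $K$, each increment $\Delta^{(\pm)}_i$ is distributed as $\sigma^{(\pm)}_{1/q}$, and the factor $e^{-\beta K_{i/q}}$ can be re-expressed as a telescoping product of i.i.d.\ increment factors exactly as in the definition (\ref{Apq}) of $A_{p,q}$. To get a clean bound with a single external factor $e^{\mp\beta(|\gamma|/q + \sigma^{(\pm)}_{1/q})}$ rather than a different increment on each interval, I would bound each per-interval fluctuation by the fluctuation over the \emph{first} mesh interval $[0,1/q)$: concretely, rewrite the sum by shifting so that the common factor $\sigma^{(\pm)}_{1/q}$ (the increment over the initial interval) is pulled out, and the remaining sum over the shifted increments reconstitutes a copy $A^{(1)}_{\lfloor qt\rfloor-1,q}$ (resp.\ $A^{(2)}_{\lfloor qt\rfloor,q}$) having the same law as $A_{p,q}$ by stationarity. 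The independence claim — that $\sigma^{(+)}_{1/q}$ is independent of $A^{(1)}_{p,q}$ — then follows because $A^{(1)}$ is built from increments of $K$ on a time range disjoint from $[0,1/q)$, and the increments of a Lévy process over disjoint intervals are independent.

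The main obstacle I anticipate is bookkeeping the index shift so that the extracted factor is genuinely independent of the reconstituted sum, rather than merely identically distributed. The naïve bound replaces the fluctuation on interval $i$ by the fluctuation on interval $0$, which does \emph{not} preserve independence if the sum still involves interval $0$. The clean way around this is to observe that the deterministic pathwise inequality holds with the \emph{maximal} per-interval fluctuation, but to obtain the stated form one instead relabels: set $A^{(1)}_{p,q}$ to be built from increments of $K$ strictly after time $1/q$ and pull the genuine increment $\sigma^{(+)}_{1/q}$ out front as an honest independent factor. Verifying that this relabeling reproduces the exact law $A_{p,q}\overset{(d)}{=}A^{(1)}_{p,q}\overset{(d)}{=}A^{(2)}_{p,q}$ and the stated independence — using only stationarity and independence of increments — is the delicate step; the integration and monotonicity estimates on each mesh interval are routine once this is set up.
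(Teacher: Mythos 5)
Your proposal is correct and follows essentially the same route as the paper: the same per-interval bounds on $K_s$ obtained from the monotonicity of $\sigma^{(+)}$ and $\sigma^{(-)}$, followed by absorbing the per-interval fluctuation into the grid value, extracting $\sigma^{(\pm)}_{1/q}$, and relabelling so that the remaining sum is a copy of $A_{p,q}$ independent of the extracted factor. The only point to tighten is that in the paper's construction only the $\sigma^{(+)}$ (resp. $\sigma^{(-)}$) component is time-shifted past $1/q$ --- e.g. $K^{(1)}_{p/q}=\gamma p/q+[\sigma^{(+)}_{(p+1)/q}-\sigma^{(+)}_{1/q}]-\sigma^{(-)}_{p/q}$ keeps the unshifted $\sigma^{(-)}$ --- so the independence of $\sigma^{(+)}_{1/q}$ from $A^{(1)}_{p,q}$ comes from the independence of the two subordinators together with the independent increments of $\sigma^{(+)}$, rather than from the whole of $K$ being built on a disjoint time range.
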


\begin{proof}
 Let $(p,q)$ be in $\N \times \N^*$ and $s \in[p/q,(p+1)/q]$. Then 
\begin{equation}\label{majmin}
 K_s \leq K_{p/q} +|\gamma|/q+[\sigma^{(+)}_{(p+1)/q}-\sigma^{(+)}_{p/q}]\quad \text{and} \quad
 K_s\geq K_{p/q}- |\gamma|/q-[\sigma^{(-)}_{(p+1)/q}-\sigma^{(-)}_{p/q}] .
\end{equation} 
Now introduce 
$$  K_{p/q}^{(1)}=K_{p/q}+[\sigma^{(+)}_{(p+1)/q}-\sigma^{(+)}_{p/q}]-\sigma^{(+)}_{1/q}=\gamma p/q+[\sigma^{(+)}_{(p+1)/q}-\sigma^{(+)}_{1/q}]-\sigma^{(-)}_{p/q},$$
and
$$ K_{p/q}^{(2)}=K_{p/q}-[\sigma^{(-)}_{(p+1)/q}-\sigma^{(-)}_{p/q}]+\sigma^{(-)}_{1/q}=\gamma p/q+\sigma^{(+)}_{p/q}-[\sigma^{(-)}_{(p+1)/q}-\sigma^{(-)}_{1/q}]. $$
Then, we have for all $(p,q) \in \N \times \N^*$ 
$$ (K_0,K_{1/q},...,K_{p/q})\overset{(d)}{=}(K_0^{(1)},K_{1/q}^{(1)},...,K_{p/q}^{(1)})\overset{(d)}{=}(K_0^{(2)},K_{1/q}^{(2)},...,K_{p/q}^{(2)}). $$
Moreover, the random vector $(K_0^{(1)},K_{1/q}^{(1)},...,K_{p/q}^{(1)})$ is independent of $\sigma^{(+)}_{1/q}$ and $(K_0^{(2)},K_{1/q}^{(2)},...,K_{p/q}^{(2)})$ is 
independent of $\sigma^{(-)}_{1/q}$. Finally, the definition of $$A_{p,q}^{(i)}=\underset{i=0}{\overset{p}{\sum}}\exp\{-\beta K_{i/q}^{(i)}\}$$ for $i \in \{1,2\}$ and the inequalities in (\ref{majmin}) complete the proof.
\end{proof}

\subsection{Asymptotical behavior  of the discretized process}

First, we  recall Theorem 2.1 of \cite{MR1821473} in the case where the test functions do not vanish.  This is the key result  to obtain the asymptotic behavior  of the discretized process.

\begin{Thm}[Giuvarc'h, Liu 01]
\label{th3} 
 Let $(a_n,b_n)_{n \geq 0}$ be a $(\R_+^*)^2$-valued sequence of iid random variables such that $\E[\log(a_0)]=0$. Assume that $b_0/(1-a_0)$ is not constant a.s. and define $A_0=1$, $A_n=\prod_{k=0}^{n-1}a_k$ and {$B_n=\sum_{k=0}^{n-1}A_kb_k$}, for $n\geq 1$. Let $ \eta, \kappa, \xi$ be three positive numbers such that $\kappa <\xi$, and $\tilde{\phi}$ and $\tilde{\psi}$ be two positive continuous functions on $\R_+$  such that they do not vanish and for a constant $C>0$ and for every $a>0$, $b\geq 0$, $b'\geq 0$, we have
$$ \tilde{\phi}(a)\leq C a^\kappa, \quad \tilde{\psi}(b)\leq \frac{C}{(1+b)^\xi}, \quad\text{ and }\quad |\tilde{\psi}(b)-\tilde{\psi}(b')|\leq C |b-b'|^\eta. $$
Moreover, assume that 
$$ \E\big[a_0^\kappa\big]<\infty,\quad \E\big[a_0^{-\eta}\big]<\infty,\quad \E\big[b_0^{\eta}\big]<\infty\quad\text{ and }\quad \E\big[a_0^{-\eta}b_0^{-\eta}\big]<\infty. $$
Then there exist two positive constants $c(\tilde{\phi},\tilde{\psi})$ and $c(\tilde{\psi})$ such that 
$$ \underset{n \to \infty}{\lim}n^{3/2}\E \left[ \tilde{\phi}(A_n)\tilde{\psi}(B_n) \right]=c(\tilde{\phi},\tilde{\psi})
\qquad \text{ and }\qquad \underset{n \to \infty}{\lim}n^{1/2}\E \left[ \tilde{\psi}(B_n) \right]=c(\tilde{\psi}).  $$
\end{Thm}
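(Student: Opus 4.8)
The plan is to reduce everything to the fluctuation theory of the centered random walk hidden inside the recursion, and then to a local limit theorem for this walk conditioned to stay in a half-line. First I would set $S_k:=\log A_k=\sum_{j=0}^{k-1}\log a_j$, so that $(S_k)$ is a random walk with $\E[\log a_0]=0$; the moment hypotheses $\E[a_0^\kappa]<\infty$ and $\E[a_0^{-\eta}]<\infty$ force $\log a_0$ to have finite exponential moments on both sides of the origin, hence a finite variance $\sigma^2$, and the non-degeneracy assumption on $b_0/(1-a_0)$ guarantees $\sigma^2>0$ and rules out a common deterministic fixed point of the affine maps. In these terms $B_n=\sum_{k=0}^{n-1}e^{S_k}b_k$ is an exponential functional of the walk, and the whole analysis rests on comparing $B_n$ with the running maximum $M_n:=\max_{0\le k<n}S_k$.

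Second, I would prove a two-sided comparison $B_n\asymp e^{M_n}\Theta_n$, where $\Theta_n$ is a tight, a.s.\ positive random factor. The lower bound is immediate from $B_n\ge e^{S_{k^\star}}b_{k^\star}$ at the argmax $k^\star$; the upper bound $B_n\le e^{M_n}\sum_k e^{-(M_n-S_k)}b_k$ uses that a recurrent walk spends only $O(1)$ time within each fixed window of its running maximum, so that $\sum_k e^{-(M_n-S_k)}b_k$ is tight (here the joint moment $\E[a_0^{-\eta}b_0^{-\eta}]<\infty$ and $\E[b_0^\eta]<\infty$ control the excursions and the sizes of the $b_k$). Because $\tilde\psi$ is bounded, Lipschitz of order $\eta$, and decays like $(1+b)^{-\xi}$ with $\xi>\kappa$, this comparison lets me replace $\tilde\psi(B_n)$ by a function of $M_n$ and of the tight factor $\Theta_n$, up to errors that are negligible after the normalisations below.

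Third comes the analytic heart. For a centered, finite-variance, non-lattice walk the probability of staying below a fixed level decays like $n^{-1/2}$, and the joint event of staying below the level with the endpoint pinned near a given location decays like $n^{-3/2}$; quantitatively, $\P(M_n\le x)\sim C\,V(x)\,n^{-1/2}$ and $\P(M_n\le x,\ S_n\in\ud y)\sim C\,V(x)\,V^{\star}(y)\,n^{-3/2}\,\ud y$, where $V,V^{\star}$ are the renewal (harmonic) functions attached to the descending and ascending ladder processes of $(S_k)$. Feeding the comparison of the previous step into the first estimate yields $\E[\tilde\psi(B_n)]\sim c(\tilde\psi)\,n^{-1/2}$ with $c(\tilde\psi)>0$ an explicit integral of $\tilde\psi$ against the limiting law of $\Theta_n$ weighted by the harmonic measure; feeding it into the second, together with $\tilde\phi(A_n)\le C e^{\kappa S_n}$ and the integrability of $\tilde\phi(e^{y})$ against $V^{\star}(y)\,\ud y$ on $(-\infty,0]$ (ensured by $\tilde\phi(a)\le Ca^\kappa$), yields $\E[\tilde\phi(A_n)\tilde\psi(B_n)]\sim c(\tilde\phi,\tilde\psi)\,n^{-3/2}$. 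Positivity of both constants is exactly where the non-degeneracy hypothesis is used, since it prevents the conditioned walk and the factor $\Theta_n$ from concentrating on a single atom.

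The main obstacle is this third step: turning the order-of-magnitude bounds $n^{-1/2}$ and $n^{-3/2}$ into genuine convergence with identified, strictly positive constants. This is a local limit theorem for the semi-direct product, i.e.\ for the affine random walk $(A_n,B_n)$, not a mere tail estimate, and it cannot be obtained from the ordinary central limit theorem alone. I would carry it out either through conditioned-random-walk local limit theorems of Vatutin--Wachtel/Doney type, combined with a coupling of $\Theta_n$ to its stationary analogue built from the time-reversed sequence, or through the spectral perturbation of the Fourier--Laplace transfer operator $f\mapsto \E[e^{s\log a_0}f(a_0\cdot+b_0)]$ in the spirit of Guivarc'h--Le Page; the Lipschitz and decay hypotheses on $\tilde\psi$ are precisely what make the relevant operator act on a good Banach space and what make the error terms in the comparison $B_n\asymp e^{M_n}\Theta_n$ summable.
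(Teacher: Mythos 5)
First, a point of reference: the paper does not prove this statement at all. Theorem \ref{th3} is imported verbatim (restricted to non-vanishing test functions) from Theorem 2.1 of Guivarc'h and Liu \cite{MR1821473}, whose proof rests on spectral analysis of transfer operators for the affine group, building on the local limit theorems of Le Page and Peign\'e \cite{MR1443957}. So there is no internal proof to compare yours against; the honest comparison is with that cited literature. Your proposal takes a genuinely different route: fluctuation theory for the centered walk $S_k=\log A_k$, the observation that $B_n$ is governed by its running maximum via $B_n\asymp e^{M_n}\Theta_n$ with $\Theta_n$ tight, and conditioned local limit theorems. This is exactly the strategy of the discrete BPRE literature (Geiger--Kersting--Vatutin, Afanasyev et al.) that the paper itself cites for analogous results, and your bookkeeping is sound: $\P(M_n\le x)\asymp n^{-1/2}$ accounts for the second limit; integrating $e^{\kappa y}$ against $\P(M_n\le x,\,S_n\in \ud y)\asymp n^{-3/2}V(x)V^{\star}(|y|)\,\ud y$ accounts for the first; $\kappa<\xi$ is what localizes the expectation on $\{M_n=O(1)\}$; and the non-degeneracy of $b_0/(1-a_0)$ rules out the common-fixed-point case $b_k=c(1-a_k)$, in which $B_n=c(1-A_n)$ stays bounded and no decay occurs at all. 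The transfer-operator route buys convergence with an identified constant in one package; your route buys probabilistic transparency, with constants expressed through harmonic (renewal) functions and the limit law of $\Theta_n$.

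That said, as a proof your text is a roadmap rather than an argument, and the gap sits exactly where you say it does. Turning the comparison $B_n\asymp e^{M_n}\Theta_n$ into asymptotic \emph{equivalence} of expectations requires (i) joint convergence of $(M_n,\Theta_n)$ under the conditioning $\{M_n\le x\}$, i.e.\ a functional limit theorem for the walk conditioned to stay low together with convergence of the exponential functional under the conditioned (Doob-transformed) law, (ii) uniform integrability to pass the H\"older-$\eta$ and decay properties of $\tilde{\psi}$ through this approximation, and (iii) the conditioned local limit theorem itself, where lattice versus non-lattice distinctions must be handled (the statement as quoted carries no aperiodicity hypothesis, so this needs care). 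All of this is deferred to Vatutin--Wachtel/Doney-type results or to the very Guivarc'h--Le Page machinery you set out to avoid; the ingredients do exist in the literature, but none of them is carried out here, so the proposal should be read as a credible alternative proof scheme, not a completed proof.
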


Let us now state a technical lemma on the tail of function $F$, useful to get the asymptotical behaviour of the disretized process. Its proof is deferred 
to Section \ref{techlem} for the convenience of the reader.

 \begin{Lem} \label{lemtecaussi(quel_nom_original...)}
 Assume that $F$ satisfies (\ref{defF}). Then there exist two positive  finite constants $\eta$ and $M$ such that for all  $(x,y)$ in $\R_+^2$ and $\varepsilon $ in $[0,\eta]$,
\bea
\label{majepsilon}
 \Big| F(x)-C_F x^{-1/\beta} \Big| &\leq & Mx^{-(1+\varepsilon)/\beta}, \\
\label{controle}
 \Big| F(x)-F(y) \Big| &\leq &M\Big| x^{-1/\beta}-y^{-1/\beta} \Big|.
\eea
\end{Lem}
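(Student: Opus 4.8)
The plan is to treat the two estimates separately, in both cases splitting $F$ into its leading term and a remainder. Write
$$F(x)=C_F(x+1)^{-1/\beta}+R(x),\qquad R(x):=C_F(x+1)^{-1/\beta-\varsigma}h(x),$$
so that $R$ carries the whole effect of the perturbation $h$ and decays one order $\varsigma$ faster than the leading term. The entire argument rests on the two facts that $\varsigma\ge 1$ and that $h$ is bounded and Lipschitz.

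For (\ref{majepsilon}) I would first rewrite
$$F(x)-C_F x^{-1/\beta}=C_F\big[(x+1)^{-1/\beta}-x^{-1/\beta}\big]+R(x).$$
Using the mean value theorem, $|(x+1)^{-1/\beta}-x^{-1/\beta}|\le \beta^{-1}x^{-1/\beta-1}$, while $|R(x)|\le C_F\|h\|_\infty(x+1)^{-1/\beta-\varsigma}$; since $\varsigma\ge 1$ both terms are $O(x^{-1/\beta-1})$ as $x\to\infty$. Multiplying by $x^{(1+\varepsilon)/\beta}$ turns these bounds into powers $x^{\varepsilon/\beta-1}$ and $x^{\varepsilon/\beta-\varsigma}$, which stay bounded as $x\to\infty$ provided $\varepsilon\le\beta$, so I would set $\eta:=\beta$. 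Near $x=0$ the function $F$ is bounded (as $x+1\ge1$ and $h$ is bounded), hence $x^{(1+\varepsilon)/\beta}|F(x)-C_F x^{-1/\beta}|\sim C_F x^{\varepsilon/\beta}$, which has a finite limit as $x\to 0$. Thus for each $\varepsilon\in[0,\eta]$ the map $x\mapsto x^{(1+\varepsilon)/\beta}|F(x)-C_F x^{-1/\beta}|$ is continuous on $(0,\infty)$ with finite limits at both ends, hence bounded; uniformity in $\varepsilon$ follows because $x^{(1+\varepsilon)/\beta}$ is monotone in $\varepsilon$ (so the supremum over $\varepsilon\in[0,\eta]$ is attained at $\varepsilon=0$ for $x\le1$ and at $\varepsilon=\eta$ for $x\ge1$), yielding a single finite constant $M$.

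For (\ref{controle}) the clean device is the change of variable $u=x^{-1/\beta}$: setting $\Phi(u):=F(u^{-\beta})$ on $(0,\infty)$, the desired inequality reads $|\Phi(u)-\Phi(v)|\le M|u-v|$, i.e.\ $\Phi$ is globally Lipschitz. Because $(x+1)^{-1/\beta}$ and $(x+1)^{-1/\beta-\varsigma}$ are smooth on $[0,\infty)$ and $h$ is Lipschitz, $F$ is locally Lipschitz, hence differentiable a.e.\ (Rademacher); differentiating term by term, together with $\varsigma\ge 1$ and the boundedness of $h$ and of its a.e.\ derivative, gives $|F'(x)|\le C(x+1)^{-1/\beta-1}$ for a constant $C$ depending only on $C_F,\beta,\varsigma,\|h\|_\infty$ and the Lipschitz constant of $h$. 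The chain rule then yields, a.e.,
$$|\Phi'(u)|=\beta\, u^{-\beta-1}\,|F'(u^{-\beta})|\le C\beta\, u^{-\beta-1}\big(u^{-\beta}\big)^{-1/\beta-1}=C\beta,$$
the powers of $u$ cancelling exactly. Since $\Phi$ is locally Lipschitz, hence absolutely continuous, integrating $\Phi'$ gives the global Lipschitz bound with constant $C\beta$, and taking the maximum of the two constants produces a single $M$.

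The main obstacle is not any single estimate but the combination of two bookkeeping points: controlling the leading difference near the singularity $x=0$ so that the prescribed power $x^{-(1+\varepsilon)/\beta}$ genuinely dominates (which is what forces $\eta\le\beta$), and recognising that the exact cancellation of the powers of $u$ above is precisely what makes $\Phi$ Lipschitz — both of which hinge on the hypothesis $\varsigma\ge1$ and on the fact that the leading exponent $1/\beta$ is common to $F$ and to the comparison function. A minor technical care is that $h$ is only Lipschitz, so the differentiation must be justified almost everywhere and then lifted to the Lipschitz bound via absolute continuity, rather than by assuming $F\in C^1$.
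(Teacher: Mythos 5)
Your proof is correct, but it does not follow the paper's route, so let me compare. For (\ref{majepsilon}) the paper's own argument is a one-line remark ("choose $\varepsilon\leq 1$ since $\varsigma\geq 1$"); your version is the same idea carried out carefully, and your threshold $\eta=\beta$ is in fact the right one: the leading error $C_F\big[(x+1)^{-1/\beta}-x^{-1/\beta}\big]\sim -\beta^{-1}C_F\,x^{-1/\beta-1}$ forces $\varepsilon/\beta\le 1$, so the paper's cutoff $\varepsilon\le 1$ is only correct when $\beta=1$, while yours works for all $\beta\in(0,1]$; your observation that the supremum over $\varepsilon\in[0,\eta]$ of $x^{(1+\varepsilon)/\beta}$ sits at the endpoints also settles the uniformity in $\varepsilon$ cleanly. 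For (\ref{controle}) the two arguments genuinely diverge. The paper takes $0\le x\le y$, uses that $F$ is non-increasing to reduce to an upper bound on $F(x)-F(y)$, writes $F$ with $\tilde h(x)=(1+x)^{1-\varsigma}h(x)$, and bounds the three resulting differences one by one against $x^{-1/\beta}-y^{-1/\beta}$ via the mean value theorem and elementary inequalities such as $(1+x)^{-1/\beta-1}-(1+y)^{-1/\beta-1}\le 2\big(x^{-1/\beta}-y^{-1/\beta}\big)$. You instead substitute $u=x^{-1/\beta}$ and prove that $\Phi(u)=F(u^{-\beta})$ is globally Lipschitz from the a.e.\ bound $|F'(x)|\le C(x+1)^{-1/\beta-1}$, the chain rule and absolute continuity; the exponent cancellation $u^{-\beta-1}\cdot u^{1+\beta}=1$ is exactly the content of the paper's algebra, made transparent. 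Your route buys a cleaner and slightly more general statement (monotonicity of $F$ is never used, and a two-sided bound comes for free), at the price of the Rademacher/a.e.\ chain-rule technicalities — which you correctly flag and which are harmless here because $u\mapsto u^{-\beta}$ is a $C^1$ diffeomorphism of $(0,\infty)$, so null sets pull back to null sets. The endpoint $x=0$ makes the right-hand side of (\ref{controle}) infinite, so restricting $\Phi$ to $(0,\infty)$ loses nothing. I see no gap.
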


Recall the definitions of $A_{p,q}$ and $F_{p,q}$ in (\ref{Apq}) and (\ref{Fpq}), respectively. The three following lemmas study the asymptotic behavior of $F_{p,q}$ and the {mean} value of $(A_{p,q}/q)^{-1/\beta}$ in the regimes of (ii), (iii) and b/.

\begin{Lem}\label{inter}
 Assume that $|\phi'_K(0+)|<\infty$,  $\theta_{max}>1$ and $ \phi'_K(1)=0 $. Then there exists a positive and finite constant $c_2(q)$ such that, 
\begin{equation}\label{defc2}
 F_{p,q} {\sim}   C_F c_2(q)  (p/q)^{-1/2}e^{(p/q)\phi_K(1)}, \quad \textrm{as }\quad p\to\infty,
\end{equation}

and 
\begin{equation}\label{*}
 \E\left[ \left( A_{p,q}/ q\right)^{-1/\beta} \right] {\sim}c_2(q)  (p/q)^{-1/2}e^{(p/q)\phi_K(1)}, \quad \textrm{as }\quad p\to\infty.
\end{equation}
\end{Lem}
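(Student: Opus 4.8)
The plan is to exploit the hypothesis $\phi_K'(1)=0$ through an exponential (Esscher) change of measure that turns the increments of $-\beta K$ into the balanced, mean-zero situation demanded by Theorem \ref{th3}. Since $\theta_{max}>1$ we have $\phi_K(1)<\infty$, so I may define a probability measure $\widetilde{\P}$ by $\frac{d\widetilde{\P}}{d\P}\big|_{\mathcal{F}_t}=\exp\{K_t-t\phi_K(1)\}$. Under $\widetilde{\P}$ the process $K$ remains a L\'evy process with Laplace exponent $\widetilde{\phi}_K(\lambda)=\phi_K(\lambda+1)-\phi_K(1)$, whence $\widetilde{\phi}_K'(0)=\phi_K'(1)=0$. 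Writing $X_j=K_{(j+1)/q}-K_{j/q}$ and $a_j=\exp\{-\beta X_j\}$, the variables $(a_j)_{j\ge 0}$ are i.i.d. under $\widetilde{\P}$ with $\widetilde{\E}[\log a_0]=-\beta\widetilde{\phi}_K'(0)/q=0$, which is exactly the balance condition of Theorem \ref{th3}.

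Next I would recast $A_{p,q}$ in the affine-random-walk language. With $A_i:=\prod_{j=0}^{i-1}a_j=\exp\{-\beta K_{i/q}\}$ one has $A_{p,q}=\sum_{i=0}^p A_i$ and $A_p^{1/\beta}=\exp\{-K_{p/q}\}$. Applying the change of measure on $\mathcal{F}_{p/q}$ (note $(A_{p,q}/q)^{-1/\beta}\le q^{1/\beta}$ is bounded, since $A_{p,q}\ge A_0=1$) gives
\begin{equation*}
\E\big[(A_{p,q}/q)^{-1/\beta}\big]=e^{(p/q)\phi_K(1)}\,q^{1/\beta}\,\widetilde{\E}\big[A_{p,q}^{-1/\beta}A_p^{1/\beta}\big].
\end{equation*}
The key algebraic step is a time reversal of the increments: setting $\widetilde{a}_k:=\exp\{\beta X_{p-1-k}\}$ and $\widetilde{A}_l:=\prod_{k=0}^{l-1}\widetilde{a}_k$, one checks $A_{p,q}/A_p=\sum_{l=0}^p\widetilde{A}_l=:\widetilde{B}_{p+1}$, hence $A_{p,q}^{-1/\beta}A_p^{1/\beta}=\widetilde{B}_{p+1}^{-1/\beta}$. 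As the $(X_j)$ are i.i.d., $\widetilde{B}_{p+1}$ has the law of the affine perpetuity built from a fixed i.i.d. sequence $(\widetilde{a}_k)$ with $\widetilde{\E}[\log\widetilde{a}_0]=\beta\widetilde{\phi}_K'(0)/q=0$ and $b_k\equiv 1$ (so $b_0/(1-\widetilde{a}_0)$ is non-constant since $K$ is non-degenerate). Choosing $\widetilde{\psi}(b)=(1+b)^{-1/\beta}$ and using $\widetilde{B}_{p+1}\ge\widetilde{A}_0=1$, the second ($n^{1/2}$) conclusion of Theorem \ref{th3} yields $\widetilde{\E}[\widetilde{B}_{p+1}^{-1/\beta}]\sim c\,p^{-1/2}$, and collecting the powers of $q$ gives (\ref{*}) with $c_2(q)=q^{1/\beta-1/2}c>0$.

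To pass from (\ref{*}) to (\ref{defc2}) I would write $F_{p,q}=C_F\,\E[(A_{p,q}/q)^{-1/\beta}]+R_{p,q}$ and control the remainder via Lemma \ref{lemtecaussi(quel_nom_original...)}, which gives $|R_{p,q}|\le M\,\E[(A_{p,q}/q)^{-(1+\varepsilon)/\beta}]$ for some $\varepsilon\in(0,\eta]$. Repeating the change of measure and reversal, and using $A_{p,q}=A_p\widetilde{B}_{p+1}$ together with $A_p^{-\varepsilon/\beta}=\widetilde{A}_p^{\varepsilon/\beta}$, one finds
\begin{equation*}
\E\big[(A_{p,q}/q)^{-(1+\varepsilon)/\beta}\big]=e^{(p/q)\phi_K(1)}\,q^{(1+\varepsilon)/\beta}\,\widetilde{\E}\big[\widetilde{A}_p^{\varepsilon/\beta}\,\widetilde{B}_{p+1}^{-(1+\varepsilon)/\beta}\big].
\end{equation*}
This is now of the form $\widetilde{\phi}(\widetilde{A}_n)\widetilde{\psi}(\widetilde{B}_n)$ with $\widetilde{\phi}(a)=a^{\varepsilon/\beta}$, $\widetilde{\psi}(b)=(1+b)^{-(1+\varepsilon)/\beta}$ and $\kappa=\varepsilon/\beta<\xi=(1+\varepsilon)/\beta$, so the first ($n^{3/2}$) conclusion of Theorem \ref{th3} gives a decay of order $p^{-3/2}$. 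Thus $R_{p,q}=O\big(p^{-3/2}e^{(p/q)\phi_K(1)}\big)=o\big(p^{-1/2}e^{(p/q)\phi_K(1)}\big)$ is negligible against the main term, and (\ref{defc2}) follows from (\ref{*}).

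I expect the main obstacle to be the bookkeeping around the reversal identity $A_{p,q}/A_p=\widetilde{B}_{p+1}$ together with the verification, for each application of Theorem \ref{th3}, of the moment hypotheses $\widetilde{\E}[\widetilde{a}_0^{\kappa}]<\infty$, $\widetilde{\E}[\widetilde{a}_0^{-\eta}]<\infty$ and $\widetilde{\E}[\widetilde{a}_0^{-\eta}b_0^{-\eta}]<\infty$. These reduce to the finiteness of $\phi_K$ at points slightly above and below $1$: since $\theta_{max}>1$ one can select $\kappa,\varepsilon,\eta$ small enough that $1+\kappa\beta<\theta_{max}$, $1+\varepsilon<\theta_{max}$ and $1-\eta\beta>0$, so that every exponential moment appearing (evaluated through $\widetilde{\phi}_K$) is finite. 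The remaining care is simply to make these choices compatible with $\varepsilon\le\eta$ and $\kappa<\xi$ across both applications.
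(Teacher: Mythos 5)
Your argument is correct and rests on the same three pillars as the paper's proof (the Esscher transform at $\lambda=1$, a time reversal of the increments turning $A_{p,q}$ into an affine random walk with balanced multipliers, and the tail estimate (\ref{majepsilon}) to pass from the power function to $F$), but the execution differs in two genuine ways. First, for (\ref{*}) you apply Theorem \ref{th3} directly to the reversed perpetuity $\widetilde{B}_{p+1}$ with $b_k\equiv 1$, whereas the paper applies it to $\widetilde{\psi}=F(x^{-\beta}\,\cdot\,/q)$ with $b_k=x^{-\beta}q^{-1}$, uses Kozlov's Theorem A for an a priori $O(p^{-1/2})$ bound on $D_q(1,p)$, and then squeezes out the limit by letting $x\to 0$; your route is shorter, avoids Kozlov's theorem for this step, and gets positivity of the limit for free from the theorem, while the paper's detour is what forces it to prove positivity of $d_2(q)$ separately. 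Second, for the remainder $\E[A_{p,q}^{-(1+\varepsilon)/\beta}]$ you use the joint $n^{3/2}$ conclusion of Theorem \ref{th3} with $\widetilde{\phi}(a)=a^{\varepsilon/\beta}$ to get $O(p^{-3/2}e^{(p/q)\phi_K(1)})$, whereas the paper splits the sum over the first and last thirds of the indices and applies Kozlov's theorem twice to get $O(p^{-1}e^{(p/q)\phi_K(1)})$; both suffice, and yours is sharper. Your bookkeeping (the identity $A_{p,q}/A_p=\widetilde{B}_{p+1}$, the relation $A_p^{-\varepsilon/\beta}=\widetilde{A}_p^{\varepsilon/\beta}$, and the moment checks via $\phi_K(1+\varepsilon)<\infty$ and $\phi_K(1-\beta)<\infty$) is accurate.

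One small imprecision to repair: Theorem \ref{th3} applied with $\widetilde{\psi}(b)=(1+b)^{-1/\beta}$ yields the asymptotics of $\widetilde{\E}[(1+\widetilde{B}_{p+1})^{-1/\beta}]$, which has the same order as, but not the same constant as, the quantity $\widetilde{\E}[\widetilde{B}_{p+1}^{-1/\beta}]$ you actually need; as stated this only gives two-sided bounds, not the equivalence $\sim c\,p^{-1/2}$. Since $\widetilde{B}_{p+1}\geq \widetilde{A}_0=1$, the fix is to take instead $\widetilde{\psi}(b)=(b\vee 1)^{-1/\beta}$ (and similarly $(b\vee 1)^{-(1+\varepsilon)/\beta}$ for the remainder), which coincides with $b^{-1/\beta}$ on the range of $\widetilde{B}_{p+1}$, is globally Lipschitz, positive, non-vanishing, and bounded by $2^{1/\beta}(1+b)^{-1/\beta}$, so all hypotheses of Theorem \ref{th3} still hold. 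With that substitution the proof is complete.
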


\begin{proof} 
Let {us} introduce the exponential change of measure known as the Escheer transform
\begin{equation}\label{escheer}
 \frac{\ud \P^{(\lambda)}}{\ud \P}\bigg|_{\mathcal{F}_t}= e^{\lambda K_t-\phi_K(\lambda)t}  \qquad \textrm{for }\lambda \in [0 ,\theta_{max}),
 \end{equation}
where $(\mathcal{F}_t)_{t\ge 0}$ is the natural filtration generated by $K$ which is naturally completed.

The following equality in law 
$$ A_{p,q}=e^{-\beta K_{p/q}}\Big( \underset{i=0}{\overset{p}{\sum}} e^{\beta (K_{p/q}-K_{i/q})}\Big)\overset{(d)}{=}e^{-\beta K_{p/q}}\Big( \underset{i=0}{\overset{p}{\sum}} e^{\beta K_{i/q}}\Big) ,$$
 leads to 
$ e^{-(p/q)\phi_K(1)} \E \left[ A_{p,q}^{-1/\beta}\right]=\E^{(1)} \left[\tilde{A}_{p,q}^{-1/\beta}\right],$
where $ \tilde{A}_{p,q}={\sum}_{i=0}^p e^{\beta K_{i/q}}$.  Let $\varepsilon>0$ {be} such that (\ref{majepsilon}) holds and observe  that $\tilde{A}_{p,q}\geq 1$ {a.s.} for every $(p,q)$ in $\N \times \N^*$. Thus,
\[ 
\E^{(1)} \left[ \tilde{A}_{p,q}^{-{(1+\varepsilon)}/{\beta}}\right] \leq   \E^{(1)} \left[\tilde{A}_{p,q}^{-{1}/{\beta}}\right] \leq  \E^{(1)} \left[ \underset{i \in [0,p]\cap \mathbb{N}}{\inf}e^{-K_{i/q}} \right].
\]
Since $\phi_K^\prime(1)=0$ and $\E[K^2_{1/q}]<\infty$, Theorem $A$ in \cite{koz} implies 
\[\E^{(1)} \left[ \underset{i \in [0,p]\cap \mathbb{N}}{\inf}e^{-K_{i/q}} \right] \sim \hat{C}_q (p/q)^{-1/2}, \quad \textrm{as }\quad p\to\infty, \]
where $\hat{C}{_q}$ is a finite positive  constant.
We define  for $z \geq 1$,
$$ D_q(z,p)=(p/q)^{1/2}\E^{(1)} \left[ \tilde{A}_{p,q}^{-{z}/{\beta}} \right]. $$
Moreover, we note that there exists $p_0 \in \N$ such that for $p\ge p_0$, $ D_q({1},p)  \leq 2\hat{C}_q$.

 Our aim is to prove that $D_q(1,p)$ converges, {as $p$ increases}, to a finite positive constant $d_2(q)$. Then, we introduce an arbitrary $x \in (0,(C_F/M)^{1/\varepsilon}q^{-1/\beta})$ and apply Theorem 
\ref{th3} with
$$  \tilde{\psi}(z)=F(z),\quad {\tilde{\phi}(z)=z^{1/(2\beta)}}, \qquad (\eta,\kappa,\xi)=(1,1/(2\beta),1/\beta). $$
Observe that $F$ is a Lipschitz function and that under the probability measure $\P^{(1)}$, 
$(a_n,b_n)_{n \geq 0}=(\exp(\beta (K_{(n+1)/q}-K_{n/q})),x^{-\beta}q^{-1})_{n \geq 0}$ is an i.i.d. sequence of random 
variables with $\E^{(1)} [\log(a_0)] = 0$, since $\phi_K'(1)=0$. Moreover,
a simple computation gives
$$\E^{(1)}[a_0^{-1}]=e^{(\phi_K(1-\beta)-\phi_K(1) )/q}<\infty,$$
so that  the moment conditions of Theorem \ref{th3}  are satisfied. We apply the result with
$$ B_n=q^{-1}{x^{-\beta}}\underset{i=0}{\overset{n-1}{\sum}} e^{\beta K_{i/q}}, \quad n \in \N^*$$
and we get the existence of a positive  finite real number $b(q,x)$ such that
$$ (p/q)^{1/2} \E^{(1)} \left[ F\Big(x^{-\beta} \tilde{A}_{p,q}/q \Big) \right]\rightarrow b(q,x) , \quad \textrm{as }\quad p\to\infty. $$
Taking expectation in  (\ref{majepsilon}) yields
\begin{equation}\label{ineg} \left\vert (p/q)^{1/2} \E^{(1)} \left[ F\Big(x^{-\beta} \tilde{A}_{p,q}/q \Big) \right]- C_F x {q^{1/\beta}} D{_q}(1,p)\right\vert\leq M x^{1+\varepsilon}{q^{(1+\varepsilon)/\beta}}D{_q}(1+\varepsilon,p).\end{equation}
Defining
$\underline{D}_q:=\liminf_{p\rightarrow \infty}D_q(1,p)$ and $\overline{D}_q:=\limsup_{p\rightarrow \infty}D_q(1,p)$, 
we combine the two last dispalys to get
$$ C_F x {q^{1/\beta}}\overline{D}{_q}\leq b(q,x)+M x^{1+\varepsilon}{q^{(1+\varepsilon)/\beta}}\limsup_{p\rightarrow\infty} D{_q}(1+\varepsilon,p), $$
and
$$ C_F x {q^{1/\beta}}\underline{D}{_q}\geq b(q,x)-M x^{1+\varepsilon}{q^{(1+\varepsilon)/\beta}} \limsup_{p\rightarrow\infty}D{_q}(1+\varepsilon,p). $$
Adding that $D_q(z,p)$ is non-increasing with respect to $z$,  $D_q(1+\varepsilon,p)\leq D_q(1,p)\leq 2\hat{C}_q$ for every $p\geq p_0$ and
$$ \overline{D}{_q}-\underline{D}{_q}\leq {\frac{4M\hat{C}_qx^{\varepsilon}q^{\varepsilon/\beta}}{C_F}}. $$
Finally, letting  $x\rightarrow 0$, we get that $D{_q}(1,p)$ converges to a  finite constant $d_2(q)$. Moreover, from \eqref{ineg}, we get for every integer $p$:
$$  (C_Fx{q^{1/\beta}}+M x^{1+\varepsilon}{q^{(1+\varepsilon)/\beta}})D{_q}(1,p) \geq (p/q)^{1/2} \E^{(1)} \left[ F\Big(x^{-\beta} \tilde{A}_{p,q}/q \Big) \right].$$
Letting $p \to \infty$, we get that $d_2(q)$ is positive, which gives (\ref{*}).

Now, using (\ref{majepsilon}), we get
$$\E \Big| F_{p,q}- C_F \left( A_{p,q}/ q\right)^{-1/\beta} \Big| \leq \E\left[ \left( A_{p,q}/ q\right)^{-(1+\varepsilon)/\beta} \right],$$
so the asymptotic behavior in (\ref{defc2}) will be proved as soon as we show that
$$ \E \Big[  A_{p,q}^{-{(1+\varepsilon)}/{\beta}} \Big] = o \left( \E \left[  A_{p,q}^{-{1}/{\beta}}\right]\right), \quad \textrm{as }\quad p\to\infty. $$

From the Escheer transform (\ref{escheer}), with $\lambda=1+\varepsilon$, and the {independence of the increments} of $K$, we have 
\begin{eqnarray*}
 \E \Big[  A_{p,q}^{-(1+\varepsilon)/\beta} \Big] &   = & e^{(p/q)\phi_K(1)} \E^{(1)}\Big[ \Big(\underset{i=0}{\overset{p}{\sum}}e^{-\beta  K_{i/q}}\Big)^{-\varepsilon/\beta} \Big(\underset{i=0}{\overset{p}{\sum}}e^{\beta(K_{p/q}-K_{i/q})}\Big)^{-1/\beta} \Big] \\
& \leq& e^{(p/q)\phi_K(1)}  \E^{(1)}\Big[\underset{0 \leq i \leq \lfloor p/3 \rfloor}{\inf}e^{\varepsilon K_{i/q}} \underset{{\lfloor 2p/3 \rfloor} \leq j \leq p}{\inf}e^{-(K_{p/q}-K_{j/q})} \Big]\\
 &= & e^{(p/q)\phi_K(1)}\E^{(1)}\Big[\underset{0 \leq i \leq \lfloor p/3 \rfloor}{\inf}e^{\varepsilon K_{i/q}} \Big] \E^{(1)}\Big[\underset{0\leq j \leq \lfloor p/3 \rfloor   }{\inf}e^{-K_{j/q}} \Big].
\end{eqnarray*}
 Using (\ref{mmt2}), we observe that  $\E^{(1)}[K_{1/q}]=0$ and
 $\E^{(1)}[K_{1/q}^2]<\infty$. We can then apply Theorem $A$ in \cite{koz} to the random walks $(-K_{i/q})_{i\geq 1}$ and $(\varepsilon K_{i/q})_{i\geq 1}$. Therefore, there exists ${C(q)}>0$ such that 
$$ \E \Big[  A_{p,q}^{-{(1+\varepsilon)}/{\beta}} \Big] \leq (C(q)/p)e^{(p/q)\phi_K(1)} = o \left( \E \left[  A_{p,q}^{-{1}/{\beta}}\right]\right), \quad \textrm{as }\quad p\to\infty. $$
Taking $c_2(q)=d_2(q)q^{1/\beta}$ leads to the result.
\end{proof}

\begin{Rque}\label{th12GKV}
In the particular case when $\beta=1$, it is enough to apply Theorem 1.2 in \cite{MR1983172} to a geometric BPRE $(X_n, n\geq 0)$ whose p.g.f's satisfy 
 \begin{equation*}\label{casgeo}  f_n(s)=\underset{k=0}{\overset{\infty}{\sum}}p_n q_n^k s^k = \frac{p_n}{1-q_n s}, \end{equation*}
with 
$ 1/p_n=1+\exp\left\{\beta\left(K_{{(n+1)}/{q}}-K_{{n}/{q}}\right)\right\}$, and $q_n=1-p_n.$ 
Using
$\E[A_{p,q}^{-1}]=\P(X_p>0)$ and  $ \log f_0'(1)= K_{1/q} ,$
allows to get the asymptotic behavior of $\E[A_{p,q}^{-1}]$ from the speed of extinction of BPRE in the case of geometric
reproduction law (with the extra assumption $\phi_K(2)<\infty$). 
\end{Rque}

 Recall that $\tau$ is the root of $\phi'_K$ on $]0,1[$, i.e. $ \phi_K(\tau)= {\min}_{0 < s < 1}\phi_K(s).$ 

\begin{Lem}\label{weak}
Assume that $\phi'_K(0)<0$,  $ \phi'_K(1)>0 $ and ${\theta_{max}>\beta+1}$. Then there exist two positive constants $d(q)$ and $c_3(q)$ such that
\begin{eqnarray}\label{defc3}
 F_{p,q} &\sim&  c_3(q)(p/q)^{-3/2}e^{(p/q)\phi_K(\tau)}, \quad \textrm{as }\quad p\to\infty, \end{eqnarray}
 and 
\begin{eqnarray}\label{defd}\E\left[  (A_{p,q}/q)^{-1/\beta} \right] &\sim& d(q) (p/q)^{-3/2}e^{(p/q)\phi_K(\tau)}, \quad \textrm{as }\quad p\to\infty.
\end{eqnarray}
\end{Lem}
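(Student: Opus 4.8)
The plan is to follow the template of the intermediate case (Lemma \ref{inter}) but to tilt the walk at the \emph{interior} minimiser $\tau$ of $\phi_K$ rather than at the boundary point $1$; this interior tilt is exactly what will create the extra $(p/q)^{-1}$ factor. I would start from the Esscher change of measure (\ref{escheer}) at $\lambda=\tau$. Since $\phi_K'(\tau)=0$, the increments of $K$ become centred under $\P^{(\tau)}$, i.e. $\E^{(\tau)}[K_{1/q}]=\phi_K'(\tau)/q=0$. Recording the Radon--Nikodym weight explicitly at the fixed time $p/q$,
\[
\E\left[F\left(A_{p,q}/q\right)\right]=e^{(p/q)\phi_K(\tau)}\,\E^{(\tau)}\left[e^{-\tau K_{p/q}}F\left(A_{p,q}/q\right)\right],
\]
so that the whole exponential rate $e^{(p/q)\phi_K(\tau)}$ is peeled off and one is left with a quenched functional of a centred random walk.

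Next I would cast the remaining expectation into the affine-random-walk form of Theorem \ref{th3}. Under $\P^{(\tau)}$ set $a_k=\exp\{-\beta(K_{(k+1)/q}-K_{k/q})\}$ and $b_k=1/q$; these are i.i.d., $A_n=\prod_{k=0}^{n-1}a_k=e^{-\beta K_{n/q}}$ and $B_n=\sum_{k=0}^{n-1}A_kb_k=q^{-1}\sum_{k=0}^{n-1}e^{-\beta K_{k/q}}$, whence $B_{p+1}=A_{p,q}/q$ and $e^{-\tau K_{p/q}}=A_p^{\tau/\beta}$. Thus, up to the harmless one-step index shift between $A_p$ and $A_{p+1}$, the functional equals $\E^{(\tau)}[\tilde\phi(A_n)\tilde\psi(B_n)]$ with $\tilde\phi(z)=z^{\tau/\beta}$ and $\tilde\psi=F$. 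The decisive point is that here $\kappa:=\tau/\beta<\xi:=1/\beta$ \emph{strictly}, because $\tau<1$; this is precisely the gap that is absent in Lemma \ref{inter}, where the relevant tilt sits at $1$ so that $\kappa=\xi$ and one only gets the $n^{-1/2}$ (second) conclusion. The strict inequality forces the first conclusion of Theorem \ref{th3}, i.e. $\E^{(\tau)}[\tilde\phi(A_n)\tilde\psi(B_n)]\sim c(\tilde\phi,F)\,n^{-3/2}$, which yields (\ref{defc3}) with $c_3(q)=c(\tilde\phi,F)q^{-3/2}$.

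It then remains to verify the hypotheses of Theorem \ref{th3}. Non-degeneracy of $b_0/(1-a_0)$ holds since $K$ is non-degenerate; the bound $\tilde\phi(a)\le Ca^\kappa$ holds with equality; and $\tilde\psi=F$ satisfies both $\tilde\psi(b)\le C(1+b)^{-1/\beta}$ and the Hölder bound with exponent $\eta=1$, because by (\ref{defF}) the function $F$ is bounded and Lipschitz. The moment conditions reduce to finiteness of $\phi_K$ at shifted arguments: $\E^{(\tau)}[a_0^\kappa]=e^{-\phi_K(\tau)/q}$ and $\E^{(\tau)}[b_0^\eta]$ are trivially finite, while both $\E^{(\tau)}[a_0^{-1}]$ and $\E^{(\tau)}[a_0^{-1}b_0^{-1}]$ reduce to $\E^{(\tau)}[e^{\beta K_{1/q}}]=e^{(\phi_K(\tau+\beta)-\phi_K(\tau))/q}$, which is finite precisely because $\tau+\beta<1+\beta<\theta_{max}$; this is where the hypothesis $\theta_{max}>\beta+1$ enters. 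For the companion statement (\ref{defd}) I would run the identical argument with $\tilde\psi(x)=(x\vee q^{-1})^{-1/\beta}$, a bounded Lipschitz surrogate which agrees with $x^{-1/\beta}$ on the range of $A_{p,q}/q$, using the deterministic lower bound $A_{p,q}\ge e^{-\beta K_0}=1$ a.s.; this gives (\ref{defd}) with its own constant $d(q)$. Note that $c_3(q)$ and $d(q)$ need not satisfy $c_3(q)=C_Fd(q)$, since the Guivarc'h--Liu constant depends on the whole test function and not only on its tail; the tail estimates (\ref{majepsilon})--(\ref{controle}) of Lemma \ref{lemtecaussi(quel_nom_original...)} serve here only to certify that $F$ and the truncated power share the same exponent $\xi$ and Hölder exponent.

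The main obstacle is not the change of measure but making the affine-walk functional genuinely fit Theorem \ref{th3}: one must check that the tilted centred walk has the required two-sided exponential moments, which is exactly the role of $\theta_{max}>\beta+1$ (it controls the negative moment $\E^{(\tau)}[a_0^{-1}]$, i.e. a positive exponential moment of order $\beta$ of $K$ under $\P^{(\tau)}$), and that the unbounded power in (\ref{defd}) may be replaced by a bounded Lipschitz surrogate without altering the asymptotics — which hinges on $A_{p,q}\ge 1$. Handling the index shift $A_p\leftrightarrow A_{p+1}$ and confirming that the constant produced by Theorem \ref{th3} is strictly positive (so that one obtains the sharp equivalence rather than a one-sided bound) are the remaining technical points.
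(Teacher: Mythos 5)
Your proof of (\ref{defc3}) is exactly the paper's: Esscher tilt at $\tau$, identification of $e^{-\tau K_{p/q}}F(A_{p,q}/q)$ with $\tilde\phi(A_n)\tilde\psi(B_n)$ for $\tilde\phi(z)=z^{\tau/\beta}$, $\tilde\psi=F$, and the same moment checks $\E^{(\tau)}[a_0^{\tau/\beta}]=e^{-\phi_K(\tau)/q}$ and $\E^{(\tau)}[a_0^{-1}]=e^{(\phi_K(\beta+\tau)-\phi_K(\tau))/q}<\infty$ via $\theta_{max}>\beta+1$. Where you diverge is (\ref{defd}): the paper does \emph{not} feed a truncated power directly into Theorem \ref{th3}; instead it recycles the machinery of Lemma \ref{inter}, introducing $D_q(z,p)=(p/q)^{3/2}e^{-(p/q)\phi_K(\tau)}\E[A_{p,q}^{-z/\beta}]$, sandwiching $C_F x^{-1/\beta}$ between $F(x^{-\beta}\cdot)$-type quantities via (\ref{majepsilon}) as the auxiliary parameter $x\to 0$, and invoking Lemma 7 of Hirano \cite{MR1633937} for the a priori bound $\E[\inf_{i\le p}e^{-K_{i/q}}]\sim C_q(p/q)^{-3/2}e^{(p/q)\phi_K(\tau)}$. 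Your surrogate $\tilde\psi(x)=(x\vee q^{-1})^{-1/\beta}$ is legitimate: it is positive, continuous, non-vanishing, Lipschitz with constant $\beta^{-1}q^{1/\beta+1}$, dominated by $(1+q)^{1/\beta}(1+x)^{-1/\beta}$, and coincides with $x^{-1/\beta}$ on the range of $B_n=A_{n-1,q}/q\ge 1/q$, so Theorem \ref{th3} applies verbatim and yields (\ref{defd}) with a strictly positive limit; the one-step index mismatch between $\tilde\phi(A_p)$ and $\tilde\psi(B_{p+1})$ is absorbed by the independence and multiplicativity identity $\E^{(\tau)}[\tilde\phi(A_{p+1})\tilde\psi(B_{p+1})]=\E^{(\tau)}[a_p^{\tau/\beta}]\,\E^{(\tau)}[\tilde\phi(A_p)\tilde\psi(B_{p+1})]$, exactly as you indicate. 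Your route for (\ref{defd}) is arguably cleaner since it bypasses both the $\varepsilon$-approximation and Hirano's lemma, at the cost of being tied to the specific structure $A_{p,q}\ge 1$; the paper's route is more robust in that it reuses the scheme already set up for the intermediate regime. Both are correct.
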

\begin{proof} {First}  we apply Theorem \ref{th3} where, for $z\ge 0$,
$$  \tilde{\psi}(z)=F(z),\qquad \tilde{\phi}(z)=z^{\tau/\beta}, \qquad (\eta,\kappa,\xi)=(1,\tau/\beta,1/\beta). $$
Again $F$ is a Lipschitz function, and under the probability measure $\P^{(\tau)}$, 
$(a_n,b_n)_{n \geq 0}=(\exp(-\beta(K_{(n+1)/q}-K_{n/q})),q^{-1})_{n \geq 0},$ is an i.i.d. sequence of random variables such that $\E^{(\tau)} [\log(a_0)] = 0$, {since $\phi'_K(\tau)=0$}. The  moment conditions   
 $$ \E^{(\tau)}\big[a_0^{\tau/\beta}\big]=e^{-\phi_K(\tau)/q}<\infty\qquad
\textrm{and}\qquad \E^{(\tau)}\big[a_0^{-1}\big]=e^{(\phi_K(\beta+\tau)-\phi_K(\tau) )/q}<\infty,$$
enable us to apply  Theorem \ref{th3}.
In this case,
$$ B_n=q^{-1}\underset{i=0}{\overset{n-1}{\sum}} e^{-\beta K_{i/q}}, \quad n \in \N^*{.} $$
 Then there exists $c_3(q)>0$ such that 
\begin{eqnarray*}
 \E \left[F(A_{p,q}/q)\right] e^{-(p/q)\phi_K(\tau) }=\E^{(\tau)} \left[F(A_{p,q}/q) e^{-\tau K_{p/q}}\right]\sim& c_3(q)(p/q)^{-3/2},
\end{eqnarray*}
as $p\to\infty$. This gives (\ref{defc3}).

To prove 
 $$  \E \left[  (A_{p,q}/q)^{-1/\beta}\right]\sim d(q)(p/q)^{-3/2}e^{\frac{p}{q}\phi_K(\tau)}, \quad \textrm{as }\quad p\to\infty$$
for  $d(q)>0$, we follow the same arguments as those used in  the proof of Lemma \ref{inter}. In other words, 
we define for $z \geq 1$,
$$ D_q(z,p)=(p/q)^{3/2}e^{-(p/q)\phi_K(\tau)}\E \left[ A_{p,q}^{-z/\beta} \right], $$
which is non-increasing with respect to $z$. We obtain the same type of inequalities as in  Lemma \ref{inter}, for the random variable $A$ instead of $\tilde{A}$.

Again we take $\varepsilon>0$ such that (\ref{majepsilon}) holds. Then Lemma 7 in \cite{MR1633937} {yields the existence of}  $C_q>0$ such that for $p$ large enough,
\[ 
\E \left[ A_{p,q}^{-(1+\varepsilon)/\beta}\right] \leq  \E \left[ A_{p,q}^{-1/\beta}\right]\leq   \E \left[ \underset{i \in [0,p]\cap \N}{\inf}e^{-K_{i/q}} \right]\sim 
C_q ({p}/{q})^{-3/2} e^{(p/q)\phi_K(\tau)}.
\]
Finally, we  use Theorem \ref{th3} to get
$0 < \liminf_{n\rightarrow \infty} D_q(1,n)=\limsup_{n\rightarrow \infty} D_q(1,n) <\infty, $
which completes the proof. 
\end{proof}

\begin{Lem}\label{crit}
Assume that $\phi'_K(0)=0$  and  {$\theta_{max}>\beta$}. Then there exist two positive constants $b(q)$ and $c_4(q)$ such that 
\begin{equation}\label{defc4}
 F_{p,q} \sim c_4(q)(p/q)^{-1/2},  \quad \textrm{as }\quad p\to\infty,
 \end{equation}
 and
\begin{equation}\label{defb}
\E\left[  (A_{p,q}/q)^{-1/\beta} \right] \sim b(q)  (p/q)^{-1/2},  \quad \textrm{as }\quad p\to\infty.
\end{equation}
\end{Lem}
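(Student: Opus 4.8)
The plan is to follow the scheme of Lemma~\ref{inter}, but to exploit the decisive simplification of the critical regime: since $\phi_K'(0)=0$, the discrete skeleton $(K_{i/q})_{i\ge 0}$ is a \emph{centered} random walk under $\P$ itself. Consequently no change of measure is needed, and one can apply the $B_n$-marginal part of Theorem~\ref{th3} (the limit $n^{1/2}\E[\tilde\psi(B_n)]\to c(\tilde\psi)$) \emph{directly} under $\P$. Concretely, I would set $a_n=\exp\{-\beta(K_{(n+1)/q}-K_{n/q})\}$ and $b_n=1/q$, so that, in the notation of Theorem~\ref{th3}, $A_n=\prod_{k<n}a_k=e^{-\beta K_{n/q}}$ and $B_n=\sum_{k<n}A_kb_k=q^{-1}\sum_{k=0}^{n-1}e^{-\beta K_{k/q}}$; in particular $B_{p+1}=A_{p,q}/q$, with $A_{p,q}$ as in (\ref{Apq}).

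Next I would check the hypotheses of Theorem~\ref{th3}. The pairs $(a_n,b_n)$ are i.i.d.\ by stationarity and independence of the increments, $\E[\log a_0]=-\beta\phi_K'(0)/q=0$, and $b_0/(1-a_0)=q^{-1}/(1-a_0)$ is non-constant because $a_0$ is non-degenerate ($K\not\equiv 0$). With the test functions below being Lipschitz we take $\eta=1$, so the relevant moment conditions reduce to $\E[a_0^{-1}]=e^{\phi_K(\beta)/q}<\infty$, which holds precisely because $\theta_{max}>\beta$, together with the (trivially finite) moments of the constant $b_0$; square-integrability of $K_{1/q}$ is guaranteed by (\ref{mmt2}).

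Two applications of the marginal limit then finish the proof. For (\ref{defc4}) I take $\tilde\psi=F$: by (\ref{defF}) with $h$ bounded, $F$ is positive, Lipschitz and satisfies $F(z)\le C(1+z)^{-1/\beta}$, i.e.\ the conditions on $\tilde\psi$ with $\xi=1/\beta$, $\eta=1$. Since $\E[F(B_{p+1})]=F_{p,q}$ (recall (\ref{Fpq})), Theorem~\ref{th3} gives $(p+1)^{1/2}F_{p,q}\to c(\tilde\psi)>0$, that is $F_{p,q}\sim c_4(q)(p/q)^{-1/2}$. For (\ref{defb}) I take $\tilde\psi_0(z)=z^{-1/\beta}$ on $[1/q,\infty)$, extended by the constant $q^{1/\beta}$ on $[0,1/q)$; this $\tilde\psi_0$ is positive, continuous, Lipschitz and satisfies $\tilde\psi_0(z)\le(1+q)^{1/\beta}(1+z)^{-1/\beta}$. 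Because the $k=0$ summand forces $B_n\ge 1/q$ almost surely, one has $\tilde\psi_0(B_{p+1})=(A_{p,q}/q)^{-1/\beta}$, whence $(p+1)^{1/2}\E[(A_{p,q}/q)^{-1/\beta}]\to c(\tilde\psi_0)>0$, i.e.\ (\ref{defb}). Positivity of $b(q)$ and $c_4(q)$ is automatic from Theorem~\ref{th3}.

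The step I expect to require the most care is the legitimate invocation of Theorem~\ref{th3}. The point is to use only its $B_n$-marginal conclusion, whose hypotheses do not call upon the auxiliary function $\tilde\phi$: this is essential here because the natural choice $\tilde\phi(z)=z^\kappa$ would demand $\E[a_0^{\kappa}]=e^{\phi_K(-\beta\kappa)/q}<\infty$, a condition on the \emph{left} tail of $K$ that may fail, since no exponential moment of the downward subordinator $\sigma^{(-)}$ is assumed (only the second moment (\ref{mmt2})). A secondary technical point is the truncation of $z\mapsto z^{-1/\beta}$ near the origin, which is harmless thanks to the lower bound $B_n\ge 1/q$. I note that one could instead mimic Lemma~\ref{inter} verbatim, deducing (\ref{defc4}) from (\ref{defb}) through $\E\big|F_{p,q}-C_F(A_{p,q}/q)^{-1/\beta}\big|\le M\,\E[(A_{p,q}/q)^{-(1+\varepsilon)/\beta}]$ and an estimate $\E[(A_{p,q}/q)^{-(1+\varepsilon)/\beta}]=o((p/q)^{-1/2})$; but this route is heavier in the critical case, because the time-reversal identity used there produces a factor $e^{K_{p/q}}$ of exponentially growing mean that the now-absent Esscher transform can no longer absorb.
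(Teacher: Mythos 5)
Your proof is correct and its core is the same as the paper's: in the critical case the skeleton $(K_{i/q})_i$ is already centered under $\P$, so Theorem~\ref{th3} is applied directly with $(a_n,b_n)=(e^{-\beta(K_{(n+1)/q}-K_{n/q})},q^{-1})$ and $\tilde\psi=F$, with no Esscher transform; this is exactly how the paper obtains (\ref{defc4}). Where you diverge is (\ref{defb}): the paper recycles the machinery of Lemmas~\ref{inter} and~\ref{weak} (the quantities $D_q(z,p)$, the sandwich between $F(x^{-\beta}\,\cdot\,)$ and the power function with $x\to0$, and Theorem~A of \cite{koz} for the a priori bound $\E[A_{p,q}^{-1/\beta}]\le\E[\inf_i e^{K_{i/q}}]$), whereas you apply Theorem~\ref{th3} a second time to the truncated power function $\tilde\psi_0(z)=\max(z,1/q)^{-1/\beta}$, using that the $k=0$ term forces $B_n\ge 1/q$ so that $\tilde\psi_0(B_{p+1})=(A_{p,q}/q)^{-1/\beta}$ exactly. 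This shortcut is legitimate ($\tilde\psi_0$ is positive, Lipschitz and dominated by $C(1+z)^{-1/\beta}$) and spares you both Kozlov's theorem and the $\varepsilon$-sandwich; its only cost is that it does not produce the intermediate estimate $\E[A_{p,q}^{-(1+\varepsilon)/\beta}]=o(\E[A_{p,q}^{-1/\beta}])$, which is not needed here. Your remark that only the $B_n$-marginal conclusion of Theorem~\ref{th3} should be invoked, because $\E[a_0^{\kappa}]=e^{\phi_K(-\beta\kappa)/q}$ may be infinite absent exponential moments of $\sigma^{(-)}$, is a genuine point of care: the paper's one-line reference to ``the same function and sequence as in Lemma~\ref{weak} under $\P$'' silently skirts this, and your treatment is the more scrupulous of the two.
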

\begin{proof} The proof is  almost the same as the proof of  Lemma \ref{weak}. We first apply  Theorem \ref{th3} to the same function $\tilde{\psi}$ and
 sequence $(a_n,b_n)_{n\geq 0}$ defined in Lemma \ref{weak} but under the probability measure $\P$ instead of $\P^{(\tau)}$. Then, we get 
\begin{eqnarray*}
  \E \Big[F ( A_{p,q}/ q) \Big] \sim  c_4(q)(p/q)^{-1/2}, \quad \textrm{as }\quad p\to\infty.
\end{eqnarray*}
 Now, we define for $z \geq 1$,
$$ D_q(z,p)=(p/q)^{1/2}\E \Big[ A_{p,q}^{-z/\beta} \Big], $$
and from Theorem A in \cite{koz} and Theorem \ref{th3}, we obtain that  $D_q(1,p)$ has a positive finite limit when $p$ goes to infinity.
\end{proof}

\subsection{From the discretized process to the continuous process} \label{conv_const} 

 Up to now, the asymptotic behavior of the processes {was depending} on the step size $1/q$. By letting  $q$ tend to infinity, we obtain our results in continuous time. 
 To do this we shall use several times a technical Lemma on limits of sequences.
\begin{Lem}\label{lemtec}
 Assume that the non-negative sequences $(a_{n,q})_{(n,q) \in \N^2}$, $(a'_{n,q})_{(n,q) \in \N^2}$ and $(b_{n})_{n \in \N}$ satisfy for every $(n,q) \in \N^2$:
$$ a_{n,q}\leq b_n \leq a'_{n,q}, $$
and that there exist  three sequences $(a(q))_{q \in \N}$, $(c^-(q))_{q \in \N}$ and $(c^+(q)_{q \in \N}$ such that
$$ \underset{n \to \infty}{\lim}a_{n,q}=c^-(q)a(q), \quad \underset{n \to \infty}{\lim}a'_{n,q}=c^+(q)a(q), \quad \text{and} \quad \underset{q \to \infty}{\lim}c^-(q)=\underset{q \to \infty}{\lim}c^+(q)=1. $$
Then there exists a non-negative constant $a$ such that 
$$ \underset{q \to \infty}{\lim}a(q)=\underset{n \to \infty}{\lim}b_{n}=a .$$
\end{Lem}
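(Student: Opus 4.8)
The plan is to sandwich the limit superior and limit inferior of $(b_n)_n$ between two rescaled copies of $a(q)$, and then to let $q \to \infty$. First I would fix $q$ and pass to the limit $n \to \infty$ in the double inequality $a_{n,q}\le b_n\le a'_{n,q}$. Since $\lim_n a_{n,q}=c^-(q)a(q)$ and $\lim_n a'_{n,q}=c^+(q)a(q)$ both exist, this gives, for every $q$,
\[
 c^-(q)\,a(q)\;\le\;\liminf_{n\to\infty}b_n\;\le\;\limsup_{n\to\infty}b_n\;\le\;c^+(q)\,a(q).
\]
Write $L:=\liminf_n b_n$ and $U:=\limsup_n b_n$; both are finite, since for each fixed $q$ the right-hand side above is finite and the sequences are non-negative, so $0\le L\le U<\infty$.

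Next I would convert these into two-sided bounds on $a(q)$ itself. Because $c^\pm(q)\to 1$, there is a $q_0$ with $c^-(q)>0$ for all $q\ge q_0$; for such $q$ one may divide the displayed inequalities to obtain
\[
 \frac{U}{c^+(q)}\;\le\;a(q)\;\le\;\frac{L}{c^-(q)}.
\]
Letting $q\to\infty$ and using $c^\pm(q)\to 1$ then yields $U\le\liminf_q a(q)$ and $\limsup_q a(q)\le L$. Combined with the trivial inequality $L\le U$, the resulting chain
\[
 U\;\le\;\liminf_{q\to\infty}a(q)\;\le\;\limsup_{q\to\infty}a(q)\;\le\;L\;\le\;U
\]
collapses, forcing every term to be equal. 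Hence $L=U=:a$, so $(b_n)$ converges to $a$; simultaneously $\lim_q a(q)=a$, and non-negativity of $a$ is inherited from that of the sequences.

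The argument is entirely elementary, so I do not expect a genuine obstacle beyond careful bookkeeping with $\liminf$ and $\limsup$. The one point that needs a word of care is the division by $c^-(q)$, which is legitimate only for $q$ large enough that $c^-(q)>0$; this is guaranteed by $c^-(q)\to 1$. It is also worth recording explicitly that, for each fixed $q$, the quantities $a(q)$, $L$ and $U$ are finite, so that all the manipulations above take place among real numbers.
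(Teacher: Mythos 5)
Your proposal is correct and follows essentially the same route as the paper: pass to the limit in $n$ for fixed $q$ to sandwich $\liminf_n b_n$ and $\limsup_n b_n$ between $c^-(q)a(q)$ and $c^+(q)a(q)$, then let $q\to\infty$ and use $c^\pm(q)\to 1$ to collapse the chain. The only cosmetic difference is that you divide by $c^\pm(q)$ for large $q$ before taking $q\to\infty$, whereas the paper absorbs $c^\pm(q)\to 1$ directly into the $\liminf_q$ and $\limsup_q$ of $a(q)$.
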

\begin{proof}
 From our assumptions, it is clear that for every $q \in \N$
$$ \limsup_{n\to \infty} b_n \leq c^+(q)a(q) \quad \text{and} \quad  c^-(q)a(q) \leq\liminf_{n\to \infty} b_n. $$
Then letting $q$ go to infinity, we obtain
$$ \limsup_{n\to \infty} b_n \leq \liminf_{q\to \infty} a(q) \quad \text{and} \quad  \limsup_{q\to \infty} a(q) \leq\liminf _{n\to \infty}b_n, $$
which ends the proof.
\end{proof}

Recalling  the notations  (\ref{defc2}) to (\ref{defb}), we prove the following limits :

\begin{Lem}\label{conv_cons}
There exist five finite positive constants $b$, $d$, $c_2$, $c_3$ and $c_4$ such that
\begin{equation}\label{conv} 
 (b(q),d(q),c_2(q),c_3(q),c_4(q))\longrightarrow (b,d,c_2,c_3, c_4 ),  \quad \textrm{as }\quad q\to\infty.\end{equation} 
\end{Lem}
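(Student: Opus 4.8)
The plan is to obtain all five convergences at once from the pathwise comparison of Lemma~\ref{discret} between the continuous functional $I_t:=\int_0^t e^{-\beta K_s}\ud s$ and the discretized sums $A^{(1)}_{p,q},A^{(2)}_{p,q}$, fed into the sandwich principle of Lemma~\ref{lemtec}. Only the regimes (ii), (iii) and b/ are involved (the strongly subcritical constant $c_1$ is treated separately by a change of measure). I would organize the argument by regime, and inside each regime apply the comparison to two non increasing functions $G$: the pure power $G(x)=x^{-1/\beta}$, which will produce $b(q)$ (critical), $d(q)$ (weakly subcritical) and $c_2(q)$ (intermediate), and $G=F$, which will produce $c_4(q)$ (critical) and $c_3(q)$ (weakly subcritical). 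The boundedness of $F$ keeps the $G=F$ quantities finite in all cases; the two families are linked through (\ref{majepsilon}), which together with the estimates $\E[A_{p,q}^{-(1+\varepsilon)/\beta}]=o(\E[A_{p,q}^{-1/\beta}])$ established in Lemmas~\ref{inter}, \ref{weak} and \ref{crit} shows that the $F$-asymptotics and the power-asymptotics share the same constant up to the factor $C_F$.

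First I would set up the exact sandwich. Fix the regime-dependent normalisation $r(t)$, equal to $t^{1/2}$ in the critical case, to $t^{1/2}e^{-t\phi_K(1)}$ in the intermediate case, and to $t^{3/2}e^{-t\phi_K(\tau)}$ in the weakly subcritical case. Applying a non increasing $G$ to the two inequalities of Lemma~\ref{discret} and taking expectations gives, for every integer $n\ge 1$ and every $q\ge 1$,
\[
 a_{n,q}\ \le\ b_n\ \le\ a'_{n,q},
\]
where $b_n:=r(n)\,\E[G(I_n)]$, and $a_{n,q}$ (resp.\ $a'_{n,q}$) is $r(n)$ times the expectation of $G$ evaluated at $\tfrac1q e^{\beta(|\gamma|/q+\sigma^{(-)}_{1/q})}A^{(2)}_{\lfloor qn\rfloor,q}$ (resp.\ at $\tfrac1q e^{-\beta(|\gamma|/q+\sigma^{(+)}_{1/q})}A^{(1)}_{\lfloor qn\rfloor-1,q}$). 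Because $n$ and $q$ are integers, $\lfloor qn\rfloor=qn$, so the exponential factors stay exact and no rounding error is introduced.

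Next I would let $n\to\infty$ with $q$ fixed. Using the independence of $\sigma^{(\pm)}_{1/q}$ from $A^{(1)},A^{(2)}$ and the equalities in law $A^{(i)}_{p,q}\overset{(d)}{=}A_{p,q}$ from Lemma~\ref{discret}, together with (\ref{majepsilon}) to pass between $G=F$ and the pure power while discarding the $-(1+\varepsilon)/\beta$ remainder, the scaling factor factorises out as $\E[e^{\pm(|\gamma|/q+\sigma^{(\pm)}_{1/q})}]$. Combining this with the discrete asymptotics (\ref{*}), (\ref{defd}), (\ref{defb}), (\ref{defc3}), (\ref{defc4}) and the elementary expansions $(\lfloor qn\rfloor/q)^{-\alpha}\sim n^{-\alpha}$ and $e^{((qn-1)/q)\phi_K(\cdot)}=e^{n\phi_K(\cdot)}e^{-\phi_K(\cdot)/q}$, I get
\[
 \lim_{n\to\infty}a_{n,q}=c^-(q)\,a(q),\qquad \lim_{n\to\infty}a'_{n,q}=c^+(q)\,a(q),
\]
where $a(q)$ is the targeted constant and $c^\pm(q)$ collects the factors $e^{\pm|\gamma|/q}$, the Laplace contributions $e^{\phi_K^{\pm}(1)/q}$ of the subordinators, and the $e^{\mp\phi_K(\cdot)/q}$ coming from the shift $p\mapsto qn-1$. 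Each of these tends to $1$ as $q\to\infty$ exactly because the regime's exponential moment hypothesis (\ref{theta}) makes the relevant $\phi_K^{\pm}$ finite. Lemma~\ref{lemtec} then yields that $a(q)$ converges to a non-negative limit; positivity is recovered by fixing one $q$ and noting that the common limit is $\ge\lim_n a_{n,q}=c^-(q)a(q)>0$, the discrete constants being positive. This gives $b(q)\to b$, $d(q)\to d$, $c_2(q)\to c_2$, $c_3(q)\to c_3$, $c_4(q)\to c_4$, all in $(0,\infty)$, which is (\ref{conv}).

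The hard part will be the passage $n\to\infty$ in the \emph{upper} bounds $a'_{n,q}$. There the scaling factor carries the up-jumps $\sigma^{(+)}_{1/q}$ with exponent $1$ (after the $-1/\beta$ power), so its integrability, and that of the $(1+\varepsilon)/\beta$ remainder from (\ref{majepsilon}), rests entirely on $\phi_K^{+}(1)<\infty$; this is automatic in the intermediate and weakly subcritical regimes, where $\theta_{max}>1$, but in the critical regime with $\beta<1$ one only assumes $\theta_{max}>\beta$ and $e^{\sigma^{(+)}_{1/q}}$ may fail to be integrable. I expect to circumvent this by truncating the up-jumps at a level sent to infinity after $q$, using the boundedness of $F$ to control the truncated remainder; the lower bounds, which carry $\sigma^{(-)}_{1/q}$ with a non-positive exponent, are controlled without any extra moment assumption.
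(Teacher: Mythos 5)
Your overall architecture is the paper's: sandwich the continuous functional between the discretized sums via Lemma~\ref{discret}, feed the two-parameter limits into Lemma~\ref{lemtec}, and recover positivity by fixing one $q$ and comparing with the (positive) discrete constant. For the pure-power constants $b(q)$, $d(q)$ and for $c_2(q)$ this works as you describe, since $x\mapsto x^{-1/\beta}$ lets the factor $e^{\pm(|\gamma|/q+\sigma^{(\pm)}_{1/q})}$ factor out exactly.

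There is, however, a genuine gap in your treatment of $c_3(q)$ and $c_4(q)$. You pass from $\E[F(cA_{p,q}/q)]$ to $C_F\,\E[c^{-1/\beta}]\,\E[(A_{p,q}/q)^{-1/\beta}]$ by invoking (\ref{majepsilon}) and ``discarding the $-(1+\varepsilon)/\beta$ remainder'', on the grounds that $\E[A_{p,q}^{-(1+\varepsilon)/\beta}]=o(\E[A_{p,q}^{-1/\beta}])$ is ``established in Lemmas~\ref{inter}, \ref{weak} and \ref{crit}''. That estimate is established (and true) only in Lemma~\ref{inter}, i.e.\ in the intermediate regime. In the weakly subcritical regime both moments decay like $p^{-3/2}e^{(p/q)\phi_K(\tau)}$ (the exponential rate saturates at $\min_{s}\phi_K(s)=\phi_K(\tau)$ once the exponent exceeds $\tau$), and in the critical regime both decay like $p^{-1/2}$; Lemmas~\ref{weak} and \ref{crit} only prove upper bounds of the \emph{same} order for the $(1+\varepsilon)$-moment, which is why the paper does not claim $c_3(q)=C_Fd(q)$ or $c_4(q)=C_Fb(q)$. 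So in precisely the two regimes where you need it, the remainder is comparable to the main term and cannot be discarded; your scaling factor does not factorize. The paper's proof avoids this by using (\ref{controle}) instead: $\big|\E[F(cA_{p,q}/q)]-F_{p,q}\big|\le M\,\E\big[|c^{-1/\beta}-1|\big]\,\E[(A_{p,q}/q)^{-1/\beta}]$, where the smallness comes not from a higher-order remainder but from $\E[|e^{\pm(|\gamma|/q+\sigma^{(\pm)}_{1/q})}-1|]\to 0$ as $q\to\infty$, the power moment being of the same order as $F_{p,q}$ with bounded ratio $d(q)/c_3(q)$. Replacing your use of (\ref{majepsilon}) by (\ref{controle}) at this step repairs the argument. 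Your closing remark about the integrability of $e^{\sigma^{(+)}_{1/q}}$ when $\theta_{max}\le 1$ in the critical regime with $\beta<1$ is a legitimate observation (the paper passes over it with ``similar arguments''), but your proposed truncation is only sketched, and for the unbounded power functional producing $b(q)$ the boundedness of $F$ gives you no help.
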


\begin{proof} First we prove the convergence of $d(q)$. From Lemma \ref{discret}, we know that for every $n \in \N^*$
\begin{equation}\label{ineq_expect}
e^{\frac{{\phi_K^-}(1)-|{\gamma}|}{q}} \E \Big[ \Big( A_{nq,q}/q\Big)^{-1/\beta}\Big]
 \leq \E \Big[ \Big(\int_0^n e^{-\beta K_u}\ud u \Big)^{-1/\beta}\Big]   
\leq e^{\frac{{\phi_K^+}(1)+|{\gamma}|}{q}}\E \Big[ \Big( A_{nq-1,q}/q\Big)^{-1/\beta}\Big].
\end{equation}
A direct application of Lemma \ref{lemtec} with 
$$ a(q)=d(q), \quad c^-(q)=e^{({\phi_K^-}(1)-|{\gamma}|)/q}, \quad \text{and} \quad c^+(q)=e^{({\phi_K^+}(1)+|{\gamma}|)/q}, $$
yields that $d(q)$ converges as $q\rightarrow \infty$  to a finite non-negative constant $d$.  Let us now prove that $d$ is positive. Let $(q_1,q_2)$ be in $\N^2$. According to \eqref{defd} and \eqref{ineq_expect} there exists $n \in \N$ such that 
$$0<e^{\frac{{\phi_K^-}(1)-|{\gamma}|}{q_1}}d(q_1)/2 \leq n^{3/2}e^{-n\phi_K(\tau)} \E \Big[ \Big(\int_0^n e^{-\beta K_u}\ud u \Big)^{-1/\beta}\Big]   
\leq 2 e^{\frac{{\phi_K^+}(1)+|{\gamma}|-\phi_K(\tau)}{q_2}}d(q_2) .$$
Letting $q_2$ go to infinity, we conclude that $\liminf_{q\to \infty}d(q)>0$.
Similar arguments  imply  the convergence of $b(q)$ to a positive constant.

Now, we prove the convergence of $c_2(q)$, $c_3(q)$ and $c_4(q)$. Again the proofs of the three cases are very similar, so  we only prove the second one. From Lemmas \ref{discret} and \ref{weak}, we know that for every $(n,q) \in {\N^2}$,
$$
\E \left[ F \Big( e^{\beta(|{\gamma}|/q +\sigma^{(-)}_{1/q})}  A_{nq,q}/q\Big)\right]
 \leq a_F(n) 
  \leq \E \left[ F \Big( e^{-\beta(|{\gamma}|/q +\sigma^{(+)}_{1/q})}  A_{nq-1,q}/q\Big)\right].
$$
Using (\ref{controle}), we obtain
\[
\begin{split}
 & F_{nq,q}+M \E\left[e^{-|{\gamma}|/q-\sigma^{(-)}_{1/q}} -1 \right] \E\left[  \Big(\frac{A_{nq,q}}{q}\Big)^{-\frac{1}{\beta}} \right] \\
 &\hspace{5cm} \leq  {a_F(n)}\leq \\
& \hspace{6cm} F_{nq-1,q}+M \E\left[ e^{|{\gamma}|/q+\sigma^{(+)}_{1/q}} -1 \right] \E\left[  \Big(\frac{A_{nq-1,q}}{q}\Big)^{-\frac{1}{\beta}} \right].
\end{split}
\]
Thus, dividing by $n^{-3/2 }\exp(n\phi_K(\tau))$ in the above inequality,  we  get the convergence using
Lemmas \ref{weak}, \ref{lemtec} and Equation (\ref{controle}) with  
$$ a(q)=c_3(q), \quad  c^-(q)=1-\frac{Md(q)(e^{({\phi_K^-}(1)-|{\gamma}|)/q}-1)}{c_3(q)} \quad c^+(q)=1+\frac{Md(q)(e^{({\phi_K^+}(1)+|{\gamma}|)/q}-1)}{c_3(q)} .$$
We then prove that $\lim_{q \to \infty}c_3(q)$ is positive using similar arguments as previously.
\end{proof}

\subsection{Proof of Theorem \ref{funcLev}}
\label{proofthp}

\begin{proof}[Proof of Theorem \ref{funcLev} a/ (i)]
Recall from Lemma II.2 in \cite{MR1771663} that the process $(K_t-K_{(t-s)^-}, 0\le s\le t)$ has the same law as $(K_s, 0\le s\le t)$. Then
\[
\int_0^t e^{-\beta K_s}ds=\int_0^t e^{-\beta K_{(t-s)}}ds=e^{-\beta K_t}\int_0^{t}e^{\beta K_t-\beta K_{(t-s)}}ds\stackrel{(d)}{=}e^{-\beta K_t}\int_0^{t}e^{\beta K_s}ds.
\]
We first note  that for every $q \in \N^*$ and $t \geq 2/q$, Lemma \ref{discret} leads to
\begin{eqnarray*} \E \left[ \left( \int_0^t e^{-\beta K_s}ds\right)^{-1/\beta} \right] & \leq & \E \left[ \left( \int_0^{2/q} e^{-\beta K_s}ds\right)^{-1/\beta} \right]\\
& \leq & { q^{1/\beta} e^{|\gamma|/q}\E\Big( e^{\sigma_{1/q}^{(+)}}(A_{1,q}^{(1)})^{-1/\beta} \Big)}\\
& \leq & q^{1/\beta} \exp \Big( \frac{\phi_K(1)+|\gamma|+\phi_K^+(1)}{q} \Big)<\infty,
  \end{eqnarray*}
where $\phi_K^+$ was defined  in (\ref{defphi}).  Hence using (\ref{escheer}), with $\lambda=1$, we have
\begin{eqnarray*}
 \E \left[ \left( \int_0^t e^{-\beta K_s}ds\right)^{-1/\beta} \right] =   \E \left[ e^{ K_t}\left( \int_0^t e^{\beta K_s}ds\right)^{-1/\beta} \right] =   e^{t\phi_K(1)} \E^{(1)} \left[ \left( \int_0^t e^{\beta K_s}ds\right)^{-1/\beta} \right]. 
\end{eqnarray*}
The above identity implies that the decreasing function $t \mapsto \E^{(1)} [ ( \int_0^t e^{\beta K_s}ds)^{-1/\beta} ] $ is finite for all $t>0$.  So it converges to a non-negative and finite limit $c_1$, as $t$ increases. This limit is positive, since under the probability $\P^{(1)}$, $K$ is still a L\'evy process with negative mean $\E^{(1)}(K_1)=\phi_K'(1)$ and according to  Theorem 1 in \cite{MR2178044}, we have
 $$ \int_0^\infty e^{\beta K_s}ds<\infty, \qquad \P^{(1)}\textrm{-a.s.}$$
Hence, we only need to prove 
\begin{equation}\label{equivstrong} a_F(t) \quad {\sim}\quad {C_F}  \E\Big[  \Big(  \int_0^t e^{-\beta K_s}ds\Big)^{-1/\beta}\Big], \quad \textrm{as }\quad t\to\infty.\end{equation}
Recall that  $\theta_{max}>1$ and $\phi_K'(1)<0$. So we can choose $ \varepsilon>0 $ such that (\ref{majepsilon}) holds, $1+\varepsilon<\theta_{max}$, $\phi_K(1+\varepsilon)<\phi_K(1)$ and $\phi_K^{\prime}(1+\varepsilon)<0$. Therefore
$$ 
\left| F\left(  \int_0^t e^{-\beta K_s}ds\right)-{C_F}\left(  \int_0^t e^{-\beta K_s}ds\right)^{-1/\beta} \right|\leq M \left(  \int_0^t e^{-\beta K_s}ds\right)^{-(1+\varepsilon)/\beta}. $$
In other {words}, it is enough  to show  
$$ \E \left[ \left( \int_0^t e^{-\beta K_s}ds\right)^{-(1+\varepsilon)/\beta} \right]=o(e^{t\phi_K(1)}), \quad \textrm{as }\quad t\to\infty. $$ 
From the Escheer transform (\ref{escheer}), with $\lambda=1+\varepsilon$, we deduce
\begin{eqnarray*}
 \E \left[ \left( \int_0^t e^{-\beta K_s}ds\right)^{-(1+\varepsilon)/\beta} \right] & = &  \E \left[ e^{ (1+\varepsilon)K_t}\left( \int_0^t e^{\beta K_s}ds\right)^{-(1+\varepsilon)/\beta} \right] \\
& = &  e^{t\phi_K(1+\varepsilon)} \E^{(1+\varepsilon)} \left[ \left( \int_0^t e^{\beta K_s}ds\right)^{-(1+\varepsilon)/\beta} \right] .
\end{eqnarray*}
Again from Lemma \ref{discret}, we obtain for $t\geq q/2$,
$$\E \left[ \left( \int_0^t e^{-\beta K_s}ds\right)^{-\frac{1+\varepsilon}{\beta}} \right]  \leq  q^{(1+\varepsilon)/\beta} \exp \Big( \frac{\phi_K(1+\varepsilon)+|\gamma|(1+\varepsilon)+\phi_K^+(1+\varepsilon)}{q} \Big)<\infty,
$$
{implying that the decreasing function $t \mapsto \E^{(1+\varepsilon)} [ ( \int_0^t \exp(\beta K_s)ds)^{-(1+\varepsilon)/\beta} ]$ is finite for all $t>0$.} This completes the proof.
\end{proof}

\begin{Rque}\label{th11GKV}
In the particular case when $\beta=1$, it is enough to apply Theorem 1.1 in 
\cite{MR1983172} to the geometric BPRE $(X_n, n\geq 0)$ defined in Remark \ref{th12GKV} to get the result. 
\end{Rque}

\begin{proof}[Proof of Theorem \ref{funcLev} a/ (ii), (iii), and b/]
The proofs are very similar for the {three}  regimes, for this reason we only focus on the proof of  the regime in a/(iii).

Let $ \varepsilon >0 $. Thanks to Lemma \ref{conv_cons}, we can choose $q \in \N^*$ such that $q\geq 1/\varepsilon$ and $(1-\varepsilon)c_3\leq c_3(q) \leq (1+\varepsilon)c_3$.
Then for every $t \geq 1$, the monotonicty of $F$ yields
$$ \E \Big[ F ( C_{\lfloor qt\rfloor,q} e^{\beta |\gamma|/q} /q)\Big]  
 \leq a_F  (t)   \leq  \E \Big[ F ( D_{\lfloor qt\rfloor-1,q} e^{-\beta |\gamma|/q} /q)\Big]. $$
Applying (\ref{controle}), we obtain :
\[ \begin{split}\Big| \E \Big[ F ( C_{\lfloor qt\rfloor,q} e^{\beta |\gamma|/q}/q )\Big]-F_{\lfloor qt\rfloor,q} \Big| & \leq (1-e^{-\varepsilon (|\gamma|-{\phi_K^-}(1))})M \E \Big[( A_{\lfloor qt\rfloor,q}/q )^{-1/\beta}\Big], \\
 \Big| \E \Big[ F ( D_{\lfloor qt\rfloor-1,q} e^{-\beta |\gamma|/q}/q )\Big]-F_{\lfloor qt-1\rfloor,q} \Big| & \leq (e^{\varepsilon (|\gamma|+{\phi_K^+}(1))}-1)M \E \Big[( A_{\lfloor qt\rfloor-1,q}/q )^{-1/\beta}\Big].  \end{split} \]
Taking  $t $ to infinity, it is clear  from Lemma \ref{weak} that both terms are bounded by 
\begin{equation} \label{def_h} l(\varepsilon)t^{-3/2}e^{t\phi_K(\tau)}=\Big[2M {d} (e^{\varepsilon( |\gamma|+{\phi_K^+}(1))}-e^{-\varepsilon (|\gamma|-{\phi_K^-}(1))})e^{-\varepsilon \phi_K(\tau)}\Big] t^{-3/2}e^{t\phi_K(\tau)} \end{equation}
where ${\phi_K^-}$ and ${\phi_K^+}$ are defined in (\ref{defphi}), and {${l}(\varepsilon)$ goes to $0$ when $\varepsilon$ decreases}. On the other hand, for $t$ large enough 
$$ (1-2\varepsilon)c_3t^{-3/2}e^{t\phi_K(\tau)} \leq F_{\lfloor qt\rfloor,q}  \leq a_F(t) \leq  F_{\lfloor qt\rfloor-1,q} \leq (1+2 \varepsilon)c_3t^{-3/2}e^{t\phi_K(\tau)},$$
which completes the proof of Theorem \ref{funcLev}.
\end{proof}

\section{Application to a cell division model}
\label{appli}
When the reproduction law has a finite second moment, the scaling limit of the {GW} process is a Feller diffusion with growth $g$ and diffusion part $\sigma^2$. That is to say, the stable case with $\beta=1$ and additional  drift term $g$. Such a process is also the scaling limit of birth and death process{es}. It gives a natural model for  populations which die and multiply  fast, randomly, without interaction. Such a model is considered in \cite{MR2754402} for parasites  growing in dividing cells. The cell divides  at constant rate $r$ and a random fraction $\Theta \in(0,1)$ of parasites   enters  the first daughter cell, whereas the remainder  enters  the second daughter cell. Following the infection in a cell line, the parasites grow as a Feller diffusion process and undergo a catastrophe when the cell divides.
{We denote by $N_t$ and $N_t^*$ the numbers of cells and infected cells at time $t$, respectively. We say that the cell population recovers when the asymptotic proportion of contaminated cells vanishes. If there is one infected cell at time $0$, $\E[N_t]=e^{rt}$ and $\E[N_t^*]=e^{rt}\P(Y_t>0),$} where\begin{align}
Y_t = &
1 +\int_0^t gY_s ds
+\int_0^t \sqrt{2\sigma^2 Y_s}dB_s + \int_0^t \int_{0}^{1} (\theta-1)
  Y_{s_-}  \rho(ds,d\theta)\label{defY}.
\end{align}
Here $B$ is a Brownian motion and $\rho(ds,d\theta)$ a Poisson random measure with intensity $2rds\P(\Theta\in d\theta)$.
Note that the intensity of $\rho$ is twice the cell division rate. This bias follows from the fact that if we pick an individual at random at time $t$, we are more likely to choose a lineage in which many division events have {occurred}. Hence the ancestral lineages from typical individuals at time t have a
division rate $2r$.

Corollary \ref{cor1} and Proposition  \ref{ppal_result}   with $\beta=1$, $\psi(\lambda)=-g\lambda +\sigma^2 \lambda$ and $\nu(dx)=2r\P(\Theta\in dx) $ imply the following result.
\begin{Cor}
\begin{enumerate}
 \item[a/]
 We assume that $ g < 2r\E\left[ \log(1/\Theta) \right]$. Then there exist positive constants $c_1,c_2,c_3$ such that
\begin{enumerate}
 \item[(i)] If $ g < 2r\E\left[ \Theta \log(1/\Theta) \right]$, then
$$ \E\left[N_t^{*}\right]\sim c_1 e^{gt},   \quad \textrm{as }\quad t\to\infty. $$
 \item[(ii)] If $ g = 2r\E\left[ \Theta \log(1/\Theta) \right]$,  then
$$ \E\left[N_t^{*}\right]\sim c_2 t^{-1/2}e^{gt},   \quad \textrm{as }\quad t\to\infty.$$
 \item[(iii)] If $ g > 2r\E\left[ \Theta \log(1/\Theta) \right]$,  then
$$ \E\left[N_t^{*}\right]\sim c_3  t^{-3/2}e^{\alpha t},   \quad \textrm{as }\quad t\to\infty.$$
where 
$\alpha = \min_{\lambda \in [0,1]}\{g\lambda +2r(\E[\Theta^\lambda]-1/2)\}<g.$
\end{enumerate}
\item[b/]We now assume  $ g = 2r\E\left[ \log(1/\Theta) \right]$, then there exists $c_4>0$ such that,
$$  \E\left[N_t^{*}\right]\sim c_4 t^{-1/2}e^{rt},   \quad \textrm{as }\quad t\to\infty.$$
\item[c/]Finally, if $ g > 2r\E\left[ \log(1/\Theta) \right]$, then there exists $0<c_5<1$ such that,
$$  \E\left[N_t^{*}\right]\sim c_5 e^{rt},   \quad \textrm{as }\quad t\to\infty.$$
\end{enumerate}
\end{Cor}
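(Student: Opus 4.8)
The plan is to read off the corollary as a direct specialization of Proposition \ref{ppal_result} and Corollary \ref{cor1} to the branching mechanism $\psi(\lambda)=-g\lambda+\sigma^2\lambda^2$ (so $\beta=1$ and $c_+=\sigma^2$) and the catastrophe measure $\nu(\ud m)=2r\,\P(\Theta\in \ud m)$, which is supported on $(0,1)$, and then to convert the survival probability into the mean number of infected cells via $\E[N_t^*]=e^{rt}\P_1(Y_t>0)$, with $Y_0=x_0=1$.

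First I would identify the driving L\'evy process $K_t=gt+\Delta_t$ and compute its Laplace exponent. Since $\Delta_t=\sum_{s\le t}\log m_s$ has L\'evy measure $\nu$, one gets $\phi_K(\lambda)=g\lambda+\phi(\lambda)$ with $\phi(\lambda)=\int_0^1(m^\lambda-1)\nu(\ud m)=2r(\E[\Theta^\lambda]-1)$. Because $\Theta\in(0,1)$ we have $m^\lambda\le 1$ for all $\lambda\ge 0$, hence $\phi(\lambda)<\infty$ for every $\lambda\ge 0$ and $\theta_{max}=+\infty$; this makes every moment threshold of Proposition \ref{ppal_result} of the form $\theta_{max}>1,\ \theta_{max}>\beta+1,\ \theta_{max}>\beta$ automatic, while (\ref{condh1}) is immediate and (\ref{CLT}) reduces to $\E[(\log\Theta)^2]<\infty$, which I would record as the standing integrability assumption on $\Theta$. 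Differentiating gives $\phi'(0)=2r\,\E[\log\Theta]=-2r\,\E[\log(1/\Theta)]$ and $\phi'(1)=2r\,\E[\Theta\log\Theta]=-2r\,\E[\Theta\log(1/\Theta)]$, so that $g+\phi'(0)$ and $g+\phi'(1)$ change sign exactly when $g$ crosses $2r\,\E[\log(1/\Theta)]$ and $2r\,\E[\Theta\log(1/\Theta)]$ respectively; this matches the trichotomy a/--c/ and, inside the subcritical case, the three regimes (i)--(iii).

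Next I would feed these into Proposition \ref{ppal_result}. In the subcritical regime it yields $\P_1(Y_t>0)\sim c_1 e^{t(\phi(1)+g)}$ in (i), $\P_1(Y_t>0)\sim c_2 t^{-1/2}e^{t(\phi(1)+g)}$ in (ii), and $\P_1(Y_t>0)\sim c_3 t^{-3/2}e^{t(\phi(\tau)+g\tau)}$ in (iii); the critical case gives $\P_1(Y_t>0)\sim c_4 t^{-1/2}$. Multiplying by $e^{rt}$, the key simplification is the symmetry of cell division: writing the intensity as $2r\,\P(\Theta\in\ud m)$ presupposes $\Theta\stackrel{d}{=}1-\Theta$, whence $\E[\Theta]=1/2$ and $\phi(1)+g=2r(\E[\Theta]-1)+g=g-r$. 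Consequently $e^{rt}e^{t(\phi(1)+g)}=e^{gt}$, giving a/(i)--(ii), while $e^{rt}e^{t(\phi(\tau)+g\tau)}=e^{\alpha t}$ with $\alpha=r+\phi(\tau)+g\tau=g\tau+2r(\E[\Theta^\tau]-1/2)$; since $\tau$ minimizes $\phi_K$ on $(0,1)$ this is precisely $\alpha=\min_{\lambda\in[0,1]}\{g\lambda+2r(\E[\Theta^\lambda]-1/2)\}$, and strict convexity of $\phi_K$ together with $\phi_K(1)=g-r$ forces $\alpha=r+\phi_K(\tau)<r+\phi_K(1)=g$, giving a/(iii). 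The critical case b/ gives $\E[N_t^*]\sim c_4 t^{-1/2}e^{rt}$ directly.

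Finally, for the supercritical case c/ (i.e. $g>2r\,\E[\log(1/\Theta)]$, so that $K$ drifts to $+\infty$), Proposition \ref{ppal_result} no longer applies and I would instead invoke Corollary \ref{cor1}(iii) together with Proposition \ref{prop}. In the stable case absorption and extinction coincide, so $\P_1(Y_t>0)$ decreases to $\P_1(W>0)=:c_5$, which is positive by Corollary \ref{cor1}(iii) and strictly less than $1$ because the explicit absorption probability of Proposition \ref{prop} is positive whenever $\int_0^\infty e^{-\beta K_s}\ud s<\infty$ a.s.; hence $\E[N_t^*]=e^{rt}\P_1(Y_t>0)\sim c_5 e^{rt}$ with $0<c_5<1$. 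The argument is essentially a translation, so I expect no genuine obstacle; the one point demanding care is the symmetry identity $\E[\Theta]=1/2$, which is what collapses the rates to the clean exponents $e^{gt}$ and $e^{rt}$, together with the bookkeeping that all the $\theta_{max}$ thresholds are met because $\Theta$ is bounded by $1$.
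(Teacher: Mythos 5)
Your proposal is correct and follows essentially the same route as the paper, whose proof consists precisely of invoking Corollary \ref{cor1} and Proposition \ref{ppal_result} with $\beta=1$, $c_+=\sigma^2$ and $\nu(\ud m)=2r\,\P(\Theta\in\ud m)$ and then multiplying the survival probability by $e^{rt}$. Your explicit flagging of the symmetry $\Theta\stackrel{d}{=}1-\Theta$ (hence $\E[\Theta]=1/2$) as the identity that collapses the rates to $e^{gt}$, $e^{\alpha t}$ and $e^{rt}$, and your treatment of the supercritical case c/ via Proposition \ref{prop} together with Corollary \ref{cor1}(iii), fill in details the paper leaves implicit but introduce no new ideas.
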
 
Hence if $ g > 2r\E\left[ \log(1/\Theta) \right]$ (supercritical case c/), the mean number of infected cells is equivalent to the
 mean number of cells. In the critical case (b/), there are  somewhat fewer infected cells, owing to the additional square root term.  
In the strongly subcritical regime (a/ (i)), the mean number of infected cells is of the same order as the number of parasites.  This suggests that parasites do not accumulate in some infected cells.
 The asymptotic behavior in the two remaining cases is more complex.\\
\indent We stress the fact that fixing the growth rate $g$ of parasites and the cell division rate $r$, but making the law of the repartition $\Theta$ vary, it changes the asymptotic behavior of the number of infected cells.
For example, if we focus on random variables $\Theta$ satisfying $\P(\Theta=\theta)=\P(\Theta=1-\theta)=1/2$ for a given $\theta \in ]0,1/2{[}$, the different regimes can be described easily (see Figure \ref{cells}). 

If $g/r>\log 2$, the cell population either recovers or not, depending on the asymmetry of the parasite sharing. If $g/r\leq \log 2/2$, the cell population recovers but 
the speed of recovery increases with respect to the asymmetry of the parasite sharing, as soon as the weakly subcritical regime is reached. 
Such phenomena were known in the discrete time, discrete space framework (see \cite{MR2418235}), but the boundaries between the regimes are not the same, due to the bias in division rate
in the continuous setting. 
Moreover, we note that if $g/r \in (\log 2/2,\log 2)$, then parasites are in the weakly subcritical regime whatever the distribution of $\Theta$ on $]0,1[$. This phenomenon also only occurs in the continuous setting.

\begin{figure}[h]
\centering
\includegraphics[width=9cm,height=6cm]{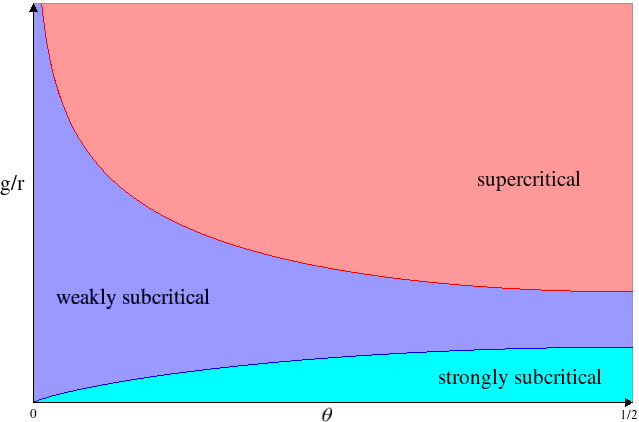}
\caption{Extinction regimes in the case $\P(\Theta=\theta)=\P(\Theta=1-\theta)=1/2$. Boundaries between the different regimes are given by $g/r =-\log (\theta(1-\theta)) $ (supercritical and subcritical) and $g/r =-\theta\log \theta-(1-\theta)\log (1-\theta) $ (strongly and weakly subcritical){.}}
\label{cells}
\end{figure}

\section{Auxiliary results}\label{annexe}
This section is devoted to  the technical results which are necessary for the previous proofs. 

\subsection{Existence and uniqueness of the backward ordinary differential equation}
The Laplace exponent of $\w{Z}$ in Theorem \ref{th1} is the solution of a backward ODE. The existence and uniqueness of this latter are stated and proved below.

\begin{Prop}
\label{existun} Let $\delta$ be in $\mathcal{BV}(\R^+)$.
Then the backward ordinary differential equation (\ref{eqnv}) admits a unique solution.
\end{Prop}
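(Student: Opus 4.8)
The plan is to fix $t>0$, $\lambda\ge 0$ and $\delta\in\mathcal{BV}(\R^+)$, and to recast the backward problem (\ref{eqnv}) as the fixed point of the integral operator
\[
\Phi(v)(s)=\lambda-\int_s^t G(r,v(r))\,\ud r,\qquad G(r,v):=e^{gr+\delta_r}\psi_0\big(e^{-gr-\delta_r}v\big),
\]
on the space $C([0,t];\R)$ equipped with the supremum norm. A continuous fixed point of $\Phi$ is precisely a solution of (\ref{eqnv}) that is differentiable at every continuity point of $\delta$ (i.e.\ at all but countably many $s$), which is the natural notion of solution here since $\delta$ is merely c\`adl\`ag.

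First I would record two a priori facts coming from the structure of $\psi_0$ and from the bounded variation of $\delta$. Since $\psi_0\ge 0$ on $[0,\infty)$ by (\ref{defpsi0}), the right-hand side of (\ref{eqnv}) is nonnegative, so any putative solution is nondecreasing and, together with $v(t)=\lambda$, stays in $[0,\lambda]$. Moreover $\delta$ has bounded variation on the compact $[0,t]$, hence is bounded there, so $C:=\sup_{s\in[0,t]}e^{-gs-\delta_s}<\infty$ and the argument $e^{-gs-\delta_s}v$ of $\psi_0$ remains in the fixed compact set $[0,C\lambda]$. This confinement is crucial because $\psi_0$ is only locally Lipschitz; on $[0,C\lambda]$ Lemma \ref{lemv} supplies a Lipschitz constant $L$.

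The key estimate is that $G$ is Lipschitz in its second variable uniformly in $s$, with the same constant $L$, because the two exponential prefactors cancel:
\[
\big|G(r,v)-G(r,w)\big|=e^{gr+\delta_r}\big|\psi_0(e^{-gr-\delta_r}v)-\psi_0(e^{-gr-\delta_r}w)\big|\le e^{gr+\delta_r}\,L\,e^{-gr-\delta_r}|v-w|=L|v-w|.
\]
Since $G(r,0)=0$, one has $|G(r,v)|\le L|v|$, so $r\mapsto G(r,v(r))$ is bounded, hence integrable, for bounded $v$; thus $\Phi$ maps $C([0,t];\R)$ into itself, and the bound above yields by induction $\|\Phi^n v-\Phi^n w\|_\infty\le \frac{(Lt)^n}{n!}\|v-w\|_\infty$. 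For $n$ large enough $\Phi^n$ is a contraction, so Banach's theorem furnishes a unique fixed point $v$, which is also the unique fixed point of $\Phi$. To free the iteration from the a priori box I would first replace $\psi_0$ by a globally $L$-Lipschitz, nonnegative extension $\tilde\psi_0$ that agrees with $\psi_0$ on $[0,C\lambda]$ and satisfies $\tilde\psi_0\equiv 0$ on $(-\infty,0]$; then wherever $v<0$ one has $\partial_s v=0$, forcing $v$ to be constant on each maximal interval where it is negative, which is impossible since such an interval would end at a zero of $v$. Hence $v\ge 0$, and with $v\in[0,\lambda]$ the argument $e^{-gs-\delta_s}v$ stays in $[0,C\lambda]$, where $\tilde\psi_0=\psi_0$; the fixed point therefore solves the original equation (\ref{eqnv}), and uniqueness for $\tilde\psi_0$ transfers to (\ref{eqnv}) since every solution of (\ref{eqnv}) is nonnegative and thus solves the extended equation.

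The main obstacle is that $\delta$ is discontinuous, so the classical Cauchy--Lipschitz theorem does not apply directly and one must argue in the Carath\'eodory framework. What rescues the argument is exactly the two ingredients highlighted after Theorem \ref{th1}: the bounded variation of $\delta$ guarantees both the boundedness of the time dependence (so $G(\cdot,v(\cdot))$ is integrable) and the confinement of the argument of $\psi_0$ to a compact set, while the Lipschitz property of $\psi_0$ combined with the exponential cancellation delivers an $s$-uniform Lipschitz constant. With these in hand the Picard--Lindel\"of scheme goes through verbatim, never using continuity of $G$ in time.
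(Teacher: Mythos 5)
Your proof is correct, but it takes a genuinely different route from the paper's. The paper deletes the jumps of $\delta$ smaller than $1/n$, solves the resulting equation classically (Cauchy--Lipschitz between the finitely many remaining jump times), and then shows via Gronwall's lemma that the approximating solutions $v^n$ form a Cauchy sequence for the uniform norm whose limit solves the integral equation, with uniqueness again by Gronwall. You instead run Picard--Lindel\"of directly on the integral operator in the Carath\'eodory setting, observing that measurability and boundedness of $r\mapsto e^{gr+\delta_r}$ on $[0,t]$ are all that is needed once the exponential prefactors cancel to give a time-uniform Lipschitz constant. Both arguments hinge on the same two ingredients --- the local Lipschitz bound of Lemma \ref{lemv} on the compact range $[0,S\lambda]$ and that cancellation --- but yours dispenses with the approximation of $\delta$ altogether and is shorter; the paper's construction, in exchange, exhibits the solution as a uniform limit of piecewise-$C^1$ classical solutions, which keeps the jump structure explicit. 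Your handling of the confinement issue (extending $\psi_0$ to a globally Lipschitz nonnegative function vanishing on $(-\infty,0]$, then checking a posteriori that the fixed point is nondecreasing with values in $[0,\lambda]$ so the extension is never invoked) addresses a point the paper leaves implicit, since for its piecewise-classical $v^n$ the bounds $0\le v^n\le\lambda$ follow at once from $\psi_0\ge 0$ and the terminal condition; alternatively you could avoid the sign argument by noting $\psi_0(x)\le Lx$ on the relevant range and applying Gronwall backward from $s=t$ to get $v(s)\ge\lambda e^{-L(t-s)}$. Note finally that your integral form $v(s)=\lambda-\int_s^t G(r,v(r))\,\ud r$ carries the sign consistent with (\ref{eqnv}); the last display of the paper's proof writes the opposite sign, which is evidently a typo.
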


The proof relies on a classical approximation of the solution of (\ref{eqnv}) and the Cauchy-Lipschitz Theorem. {When there is no accumulation of jumps, the} latter provides the existence and uniqueness  of the solution between two successive jump times of $\delta$. The problem remains on the times where accumulation of jumps occurs.  Let us define the family of functions $\delta^n$ by deleting the small jumps of $\delta$,  
\[
\delta^n_t=\delta_t-\sum_{s\le t}\Big(\delta_s-\delta_{s-}\Big)\mathbf{1}_{\{|\delta_s-\delta_{s-}|<1/n\}}.
\]

We note that 
$\psi_0$ is continuous, and $ s \mapsto e^{gs+ \delta^n_s}$ is piecewise $ C^1$ on $\R^+$ with a finite number of discontinuities.   From the Cauchy-Lipschitz Theorem,  for every $n \in \N^*$  we can define a solution $v_t^n(.,\lambda,{\delta})$ continuous with c\`adl\`ag first derivative of the backward differential equation:
$$ \frac{\partial}{\partial s} v^n_t(s,\lambda, {\delta})=e^{gs+ \delta^n_s}\psi_0\big(e^{-gs- \delta^n_s}v_t^n(s,\lambda,{\delta})\big), \quad 0 \leq s \leq t,  \qquad v_t^n(t,\lambda,{\delta})=\lambda.$$

 We want to show that the sequence $(v_t^n(.,\lambda,\delta))_{n\geq 1}$ converges to a function  $v_t(.,\lambda,{\delta})$ which is  solution of (\ref{eqnv}). This follows from the next result. We   fix $t>0$ and define 
\begin{equation}\label{constants1} S:=\sup_{s \in [0,t],n\in \N^*} \Big\{ e^{gs+ \delta^n_s},e^{-gs- \delta^n_s} \Big\}. \end{equation}

\begin{Lem} \label{lemv} For every $\lambda > 0$, there exists a positive finite constant $C$ such that for all $0\leq \eta \le \kappa \leq  \lambda S,$
\begin{equation}\label{C}
0 \le \psi_0(\kappa)-\psi_0(\eta) \leq  C(\kappa-\eta).
\end{equation}
\end{Lem}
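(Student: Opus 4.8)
The plan is to establish both inequalities by showing that $\psi_0$ is non-decreasing and Lipschitz on the compact interval $[0,\lambda S]$, the only real input being the two integrability properties of $\mu$ recorded in (\ref{condmu}). First I would differentiate $\psi_0$ (as given in (\ref{defpsi0})) under the integral sign, which is legitimate because the integrand $e^{-uz}-1+uz$ and its $u$-derivative are dominated locally uniformly in $u$ by $\mu$-integrable functions via (\ref{condmu}); this yields, for $u\ge 0$,
$$\psi_0'(u)=2\sigma^2 u+\int_0^\infty z\big(1-e^{-uz}\big)\mu(\ud z).$$
Since $1-e^{-uz}\ge 0$ for $u,z\ge 0$, the integrand is non-negative, so $\psi_0'\ge 0$ on $[0,\infty)$, and integrating gives $\psi_0(\kappa)-\psi_0(\eta)=\int_\eta^\kappa \psi_0'(u)\,\ud u\ge 0$, which is the left inequality in (\ref{C}).

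For the Lipschitz bound, I would observe that $\psi_0$ is convex: its second derivative $2\sigma^2+\int_0^\infty z^2 e^{-uz}\mu(\ud z)$ is non-negative. Hence $\psi_0'$ is non-decreasing and its supremum over $[0,\lambda S]$ is attained at the right endpoint, so it suffices to check $\psi_0'(\lambda S)<\infty$. The Brownian term contributes $2\sigma^2\lambda S$. For the integral I would split at $z=1$: on $\{z\le 1\}$ use $1-e^{-\lambda S z}\le \lambda S z$ to bound the integrand by $\lambda S z^2$, which is $\mu$-integrable by (\ref{condmu}) since $z\wedge z^2=z^2$ there; on $\{z>1\}$ use $1-e^{-\lambda S z}\le 1$ to bound the integrand by $z$, again $\mu$-integrable by (\ref{condmu}) since $z\wedge z^2=z$ there. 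This produces a finite constant $C=C(\lambda,S,\sigma,\mu)$ with $\psi_0'(u)\le C$ on $[0,\lambda S]$, and integrating the derivative gives $\psi_0(\kappa)-\psi_0(\eta)\le C(\kappa-\eta)$, completing (\ref{C}).

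I do not anticipate a genuine obstacle: the statement is essentially a smoothness estimate for the branching mechanism restricted to a bounded set of arguments, and the only delicate point is integrability near $z=0$ and $z=\infty$, which is exactly what the splitting at $z=1$ combined with (\ref{condmu}) resolves. If one prefers to avoid differentiation under the integral sign, the same constant can be obtained by working directly on the difference: writing $e^{-\kappa z}-1+\kappa z-(e^{-\eta z}-1+\eta z)=\int_\eta^\kappa z(1-e^{-uz})\,\ud u$, one sees the integrand is non-negative and, being non-decreasing in $u$, is bounded by its value at $u=\kappa\le\lambda S$, giving the factor $z(1-e^{-\lambda S z})$ and hence the same $\mu$-integrable majorant.
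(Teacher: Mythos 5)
Your proof is correct and rests on the same ingredients as the paper's own argument --- the split of the L\'evy integral at $z=1$, the elementary bounds $1-e^{-uz}\le \min(1,uz)$, and condition (\ref{condmu}) --- the only difference being that the paper estimates the difference $\psi_0(\kappa)-\psi_0(\eta)$ directly via the inequalities $0\le e^{-\kappa z}-e^{-\eta z}+(\kappa-\eta)z\le(\kappa-\eta)z$ and $0\le e^{-x}-1+x\le x\wedge x^2$, which is essentially the alternative you sketch in your closing paragraph. One trivial caveat: the displayed second derivative need not be finite at $u=0$ when $\int_1^\infty z^2\mu(\ud z)=\infty$, but you do not actually need convexity, since $\psi_0'(u)=2\sigma^2 u+\int_0^\infty z\bigl(1-e^{-uz}\bigr)\mu(\ud z)$ is manifestly non-decreasing in $u$ term by term.
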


\begin{proof} 

First, we observe that $S$ is finite and that  for all $0\leq \eta < \kappa \leq \lambda S$, 
we have $0 \leq  e^{-\kappa x}-e^{-\eta x}+(\kappa-\eta)x \leq  (\kappa-\eta) x$ for 
$x\geq 0$ since $x \mapsto e^{-x}+x$ is increasing and $e^{-\kappa x}\leq e^{-\eta x}$. Moreover 
\begin{equation} \label{inegexp} 0 \leq e^{-x}-1+x\leq  x\wedge x^2,\end{equation}
and combining these  inequalities yields
\[
\begin{split}
& \psi_0(\kappa)-\psi_0(\eta) \\
&\quad =\sigma^2 (\kappa^2-\eta^2)+\int_1^\infty \Big(e^{-\kappa x}-e^{-\eta x}+(\kappa-\eta)x \Big)\mu(dx) \\
 &\quad \qquad +(\kappa -\eta)\int_0^1 x(1-e^{-\eta x})\mu(dx)+\int_0^1 \Big(e^{-(\kappa-\eta) x}-1+(\kappa-\eta)x \Big)e^{-\eta x}\mu(dx)\\
 &\quad \leq \sigma^2 (\kappa^2-\eta^2) +(\kappa-\eta)\int_1^\infty x\mu(dx)+(\kappa-\eta)\eta\int_0^1 x^2\mu(dx)+(\kappa-\eta)^2\int_0^1 x^2\mu(dx)\\
&\quad \leq   \Big[2\lambda S \sigma^2  +\int_1^\infty x\mu(dx)+\lambda S\int_0^1 x^2\mu(dx) \Big]  (\kappa-\eta),
\end{split}
\]
which proves Lemma \ref{lemv}.
\end{proof}

Next, we  prove the  existence and uniqueness result.

\begin{proof}[Proof of Proposition \ref{existun}]
We now prove that   $(v_t^n(s,\lambda,\delta),s \in [0,t])_{n \geq 0}$ is a Cauchy sequence. For  simplicity, we denote $v^n(s)=v_t^n(s,\lambda,\delta)$, and for all $v\geq 0$:
$$ \psi^n(s,v)= e^{gs+ \delta^n_s}\psi_0\big(e^{-gs- \delta^n_s}v\big) \quad \text{and} \quad \psi^\infty(s,v)= e^{gs+ \delta_s}\psi_0\big(e^{-gs- \delta_s}v\big) . $$

We have for any $ 0\leq s\leq t$ and $m,n\geq 1$:
\begin{eqnarray}\label{ctrl} |v^n(s)-v^m(s)|&=&\Big| \int_s^t \psi^n(u,v^n(u))du - \int_s^t \psi^m(u,v^m(u))du \Big|\\ \nonumber
&\leq & \int_s^t (R^n(u)+R^m(u))du+\int_s^t\Big|  \psi^\infty(u,v^n(u)) - \psi^\infty(u,v^m(u))\Big| du,\end{eqnarray}
where for  any $u \in [0,t]$,
\begin{eqnarray*}
R^n(u)&:=& \Big| \psi^n(u,v^n(u)) - \psi^\infty(u,v^n(u)) \Big|\\
&\leq & e^{gu+\delta^n_u}\Big| \psi_0\big(e^{-gu- \delta^n_u}v^n(u)\big)-\psi_0\big(e^{-gu- \delta_u}v^n(u)\big) \Big|+e^{gu}\psi_0\big(e^{-gu- \delta_u}v^n(u)\big) \Big| 
e^{ \delta^n_u}-e^{ \delta_u} \Big|.
\end{eqnarray*}
Moreover, from (\ref{constants1}) {to} (\ref{C}), we obtain
\begin{eqnarray*}
  R^n(u)&\leq & SC\lambda \Big| e^{- \delta^n_u}-e^{- \delta_u} \Big|+ e^{\vert g\vert t}\psi_0(\lambda S)\Big| e^{ \delta^n_u}-e^{ \delta_u} \Big| \\
& \leq & \Big( SC\lambda +e^{\vert g \vert t}\psi_0(\lambda S) \Big) \sup_{u \in [0,t]} \Big\{\Big| e^{- \delta^n_u}-e^{- \delta_u} \Big|, \Big| e^{ \delta^n_u}-
e^{ \delta_u} \Big| \Big\}:=s_n.
\end{eqnarray*}
Using similar arguments as above, we get from (\ref{C}),
$$ \Big|  \psi^\infty(u,v^n(u)) - \psi^\infty(u,v^m(u))\Big|\leq  CS^2 \Big| v^n(u)-v^m(u) \Big|. $$
From $(\ref{ctrl})$, we use   Gronwall's Lemma (see e.g. Lemma 3.2 in \cite{MR1112411}) with
$$ R_{m,n}(s)= \int_s^t R^n(u)du+\int_s^t R^m(u)du,$$
to  deduce that for all $0 \leq s \leq t$,
$$ |v^n(s)-v^m(s)|\leq R_{m,n}(s)+CS^2e^{CS^2(t-s)}\int_s^t R_{m,n}(u)du. $$
Recalling that $R_n(u)\leq s_n$ and $\int_s^t R^n(u)du \leq ts_n$ for $u\leq t$, we get for every $n_0 \in \N^*$,
$$ \underset{m,n\geq n_0, s \in [0,t]}{\sup}|v^n(s)-v^m(s)|\leq t \Big[1+CS^2e^{CS^2t}t\Big] \underset{m,n\geq n_0}{\sup}( s_n+s_m ).$$
Adding that $s_n\rightarrow 0$ ensures that  $(v^n(s),s \in [0,t])_{n\geq 0}$ is a Cauchy sequence under  the uniform norm. Then there exists  a continuous function $v$  on $[0,t]$ such that 
$v^n \to v, $ as $n$ goes to $\infty$.

Next, we prove that $v$ is solution of the Equation (\ref{eqnv}). As $\delta $ satisfies (\ref{constants1}), we have for any $s \in [0,t]$ and $n \in \N^*$:
\[
\begin{split}
 &\Big|v(s)-\int_s^t\psi^\infty (s,v(s))ds-\lambda \Big| \\
& \,\,\,\,\leq  \Big|v(s)-v^n(s)\Big|+\int_s^t\Big|\psi^\infty (s,v(s))-\psi^n (s,v(s))\Big|ds
+\int_s^t\Big|\psi^n (s,v(s))-\psi^n (s,v^n(s))\Big|ds\\
&\,\,\,\, \leq ts_n+ (1+CS^2)\sup \Big\{  \Big|v(s)-v^n(s)\Big|, s\in [0,t] \Big\},
\end{split}
\]
so that letting $n\rightarrow \infty$ yields
$\Big|v(s)-\int_s^t\psi^\infty (s,v(s))ds-\lambda \Big|=0.$
It  proves that  $v$ is solution of (\ref{eqnv}). The uniqueness  follows from   Gronwall's lemma.
\end{proof}

\subsection{An upper bound for $\psi_0$}

The study of the Laplace exponent of $\w{Z}$ in Corollary \ref{cor1} requires a fine control of the branching mechanism $\psi_0$.

\begin{Lem}\label{exg}
Assume that  the process ${(gt+\Delta_t, t\ge 0)}$ {goes} to $+\infty$ a.s. There exists a non-negative increasing function ${k}$ on $\R^+$ such that  for every $\lambda \ge 0$
$$ \psi_0(\lambda)\leq \lambda {k}(\lambda) \qquad\textrm{ and } \qquad\int_0^\infty {k}\Big( e^{-(gt+\Delta_t)} \Big) dt <\infty.$$
\end{Lem}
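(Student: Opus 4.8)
The plan is to build $k$ directly out of the branching mechanism $\psi_0$ and then to reduce the required bound on $\int_0^\infty k(e^{-(gt+\Delta_t)})\,\ud t$ to the exponential–functional estimate of Bertoin–Yor together with a linear growth control of the environment. Starting from the elementary inequality $0\le e^{-u}-1+u\le u\wedge u^2$ (already recorded in (\ref{inegexp})) applied with $u=\lambda z$, the representation (\ref{defpsi0}) gives
\begin{equation*}
\psi_0(\lambda)=\sigma^2\lambda^2+\int_0^\infty(e^{-\lambda z}-1+\lambda z)\mu(\ud z)\le \lambda\Big(\sigma^2\lambda+\int_0^\infty(z\wedge\lambda z^2)\mu(\ud z)\Big).
\end{equation*}
I would therefore set $k(\lambda):=\sigma^2\lambda+\int_0^\infty(z\wedge\lambda z^2)\mu(\ud z)$. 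This function is non-negative, it is finite for every $\lambda$ (its large-$z$ part equals $\int_{1/\lambda}^\infty z\,\mu(\ud z)<\infty$ by (\ref{condmu})), it is non-decreasing because $\lambda\mapsto z\wedge\lambda z^2$ is non-decreasing for each fixed $z$, and it satisfies $\psi_0(\lambda)\le\lambda k(\lambda)$ by construction. (Equivalently one may take $k=\psi_0(\cdot)/(\cdot)$, which is non-decreasing since $\psi_0$ is convex with $\psi_0(0)=\psi_0'(0)=0$.)

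The heart of the matter is to quantify how fast $k(\lambda)\to 0$ as $\lambda\downarrow 0$. Splitting the integral at $z=1/\lambda$,
\begin{equation*}
k(\lambda)=\sigma^2\lambda+\lambda\int_0^{1/\lambda}z^2\mu(\ud z)+\int_{1/\lambda}^\infty z\,\mu(\ud z),
\end{equation*}
I would bound the tail using (\ref{xlogx}) by $\log^{-(1+\varepsilon)}(1+1/\lambda)\int_{1/\lambda}^\infty z\log^{1+\varepsilon}(1+z)\mu(\ud z)=O(\log^{-(1+\varepsilon)}(1/\lambda))$, and the truncated second moment by writing $z^2=\big(z/\log^{1+\varepsilon}(1+z)\big)\,z\log^{1+\varepsilon}(1+z)$ and using that $z\mapsto z/\log^{1+\varepsilon}(1+z)$ is eventually increasing, which yields $\lambda\int_0^{1/\lambda}z^2\mu(\ud z)=O(\lambda)+O(\log^{-(1+\varepsilon)}(1/\lambda))$ as well. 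Hence there are a constant $C$ and a $\lambda_0>0$ with $k(\lambda)\le C\lambda+C\log^{-(1+\varepsilon)}(1/\lambda)$ for $0<\lambda\le\lambda_0$.

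Substituting $\lambda=e^{-(gt+\Delta_t)}$, so that $\log(1/\lambda)=gt+\Delta_t$, the integral splits into three contributions. On any finite interval $[0,t_0]$ the integrand is bounded, since $t\mapsto gt+\Delta_t$ is c\`adl\`ag and hence bounded below on $[0,t_0]$, so $e^{-(gt+\Delta_t)}$ is bounded there and $k$ is continuous; thus this part is finite a.s. The linear term contributes $C\int^\infty e^{-(gt+\Delta_t)}\,\ud t$, which is finite a.s. by Theorem 1 in \cite{MR2178044} because $(gt+\Delta_t)$ drifts to $+\infty$ — exactly the criterion already invoked in Proposition \ref{prop}. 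The only remaining term is $C\int^\infty (gt+\Delta_t)^{-(1+\varepsilon)}\,\ud t$.

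The main obstacle is precisely this last integral, and I would handle it through an a.s. linear lower bound on the environment. Since $(gt+\Delta_t)$ is a L\'evy process drifting to $+\infty$, its mean $g+\phi'(0)$ lies in $(0,+\infty]$, and the strong law for L\'evy processes gives $(gt+\Delta_t)/t\to g+\phi'(0)>0$ a.s.; consequently $gt+\Delta_t\ge ct$ for all large $t$ and some $c>0$, so that $\int^\infty(ct)^{-(1+\varepsilon)}\,\ud t<\infty$ because $1+\varepsilon>1$. This is where the exponent $1+\varepsilon$ in (\ref{xlogx}), rather than a bare logarithm, is indispensable: a pure $\log$ weight would only produce $k(\lambda)\lesssim 1/\log(1/\lambda)$ and the divergent integral $\int^\infty\ud t/t$. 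If one prefers to avoid the implicit moment hypothesis behind the strong law, the same linear lower bound can be extracted from a Borel–Cantelli / renewal argument controlling the occupation time $|\{t\ge 0:\,gt+\Delta_t\le L\}|$ of low levels $L$, which is the genuine analytic core of the estimate; either route delivers the claimed a.s. finiteness and completes the proof.
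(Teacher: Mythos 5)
Your proposal is correct and follows essentially the same route as the paper: the same bound $e^{-u}-1+u\le u\wedge u^2$, the same split of the integral at $z=1/\lambda$, the same use of the monotonicity of $z\mapsto z/\log^{1+\varepsilon}(1+z)$ together with hypothesis (\ref{xlogx}) to get $k(\lambda)=O(\lambda)+O(\log^{-(1+\varepsilon)}(1/\lambda))$, and the same final integrability argument resting on the a.s.\ linear growth of $gt+\Delta_t$ (the paper invokes the finite first moment at this point exactly as you do via the strong law, and likewise absorbs rather than elaborates this step). The only cosmetic difference is that you keep the $O(\lambda)$ term separate and dispose of it via the Bertoin--Yor criterion, whereas the paper absorbs it into the logarithmic term before substituting $\lambda=e^{-(gt+\Delta_t)}$.
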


\begin{proof}
{The} inequality (\ref{inegexp}) implies that  for every $\lambda \geq 0$,
 \Bea
 \psi_0(\lambda) &\leq &\sigma^2\lambda^2+\int_0^{\infty} \left(\lambda^2{z}^2\mathbf{1}_{\{\lambda {z}\leq 1\}} +\lambda {z}\mathbf{1}_{\{\lambda {z}>1\}}\right)\mu(d{z}) \\
 & \leq & \left(\sigma^2+ \int_0^1{z}^2\mu(d{z}) \right)\lambda^2+\lambda^2\mathbf{1}_{\{\lambda <1\}}\int_1^{1/\lambda}{z}^2\mu(d{z})+\lambda \int_{1/\lambda}^{\infty} {z}\mu(d{z}). \\
 \Eea
Now, using  condition (\ref{xlogx}) we obtain the existence of a positive constant $c$ such that
\begin{eqnarray*}
 \lambda \int_{1/\lambda}^{\infty} {z}\mu(d{z}) \leq  \lambda\log^{{-(1+\varepsilon)}}(1+1/\lambda) \int_{1/\lambda}^{\infty} {z}\log^{{1+\varepsilon}}(1+{z})\mu(d{z}) \leq  c\lambda\log^{{-(1+\varepsilon)}}(1+1/\lambda).
\end{eqnarray*}
Next, let us introduce the function $f$, given by
$$ f({z})={z}^{-1}\log^{{1+\varepsilon}}(1+{z}), \quad \textrm{for } {z} \in [1,\infty). $$
If we derivate the function $f$, we deduce  that there exists a positive real number $A> 1$ such that $f$ is decreasing on $[A,\infty)$. Therefore, for every $\lambda<1/A$, 
\begin{eqnarray*}
\int_{{A}}^{1/\lambda}\lambda^2{z}^2\mu(d{z})&{=}&\lambda \log^{{-(1+\varepsilon)}}\left(1+1/\lambda\right)f\left(1/\lambda\right) \int_A^{1/\lambda}\frac{{z}\log^{{1+\varepsilon}}(1+{z})}{f({z})} \mu(d{z})\\
&\le&\lambda\log^{{-(1+\varepsilon)}}\left(1+1/\lambda\right) \int_A^{1/\lambda}{z}\log^{{1+\varepsilon}}(1+{z}) \mu(d{z}).\\
\end{eqnarray*}
Adding that {$
\lambda^2 \int_1^{A}{z}^2\mu(d{z})\leq \lambda^2 A \int_1^\infty {z} \mu(d{z})$} and using again  condition (\ref{xlogx}), we deduce that there exists a positive constant $c'$ such that  for every $\lambda \geq 0$,
$$ \psi_0(\lambda)\leq c' \Big(  \lambda^2+\lambda \log^{{-(1+\varepsilon)}}(1+1/\lambda)  \Big).$$
Since  $\lambda^2$ is negligible with respect to $\lambda \log^{{-(1+\varepsilon)}}(1+1/\lambda)$ when $\lambda$ is close enough to  $0$ or infinity, we 
conclude that there exists a positive constant $c''$ such that
$$ \psi_0(\lambda)\leq c''\lambda \log^{{-(1+\varepsilon)}}(1+1/\lambda).  $$
Defining the function ${k}({z})= c'' \log^{{-(1+\varepsilon)}}(1+1/{z})$, for ${z}> 0$, we get that:
$$ {k}\Big( e^{-(gt+\Delta_t)} \Big)\sim c'' \log^{-(1+\varepsilon)}(2), \quad (t \to 0), $$
thus the integral of $k( \exp(-gt-\Delta_t))$ is finite in a neighborhood  of zero, and
$$ 0 \leq  \int_1^\infty {k}\Big( e^{-(gt+\Delta_t)} \Big) dt \leq c''  \int_1^\infty e^{-(gt+\Delta_t)}(gt+\Delta_t)^{-(1+\varepsilon)}  dt,$$
which is finite since the process $(gt+\Delta_t, t\ge 0)$ drifts $+\infty$ and has finite first moment. This completes the proof.
\end{proof}

\subsection{Extinction versus explosion}

We now verify that the process $(Y_t)_{t\ge 0}$ can be properly renormalized as $t\rightarrow \infty$ on the non-extinction event. We use a classical branching argument.

\begin{Lem} \label{w0}
Let  $Y$ be a non-negative Markov process satisfying the branching property.  
We also assume  that there exists a positive  function $a_t$ such that for every $x_0>0$, there exists a non-negative finite random variable $W$ such that
$$a_tY_t\xrightarrow[t\to\infty]{} W \quad \text{a.s},\qquad  \P_{x_0}(W>0)>0, \qquad a_t \stackrel{t\rightarrow\infty}{\longrightarrow} 0. $$
Then 
$$\{W=0\}=\Big\{ Y_t\xrightarrow[t\to\infty]{} 0 \Big\} \qquad \P_{x_0} \quad a.s. $$
\end{Lem}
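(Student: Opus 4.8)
The plan is to establish the two inclusions separately: $\{Y_t\to0\}\subseteq\{W=0\}$ is elementary, while the reverse inclusion rests on a martingale argument powered by the branching property.

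First I would treat the easy inclusion. On the event $\{Y_t\to0\}$ the trajectory is eventually bounded, so since $a_t\to0$ by hypothesis the product $a_tY_t$ tends to $0$; as $a_tY_t\to W$ a.s., this forces $W=0$. The same elementary computation run in reverse shows that on $\{W>0\}$ one has $Y_t=(a_tY_t)/a_t\to+\infty$, a consistency fact I will keep in mind. Thus $\{Y_t\to0\}\subseteq\{W=0\}$, and it remains to prove $\{W=0\}\subseteq\{Y_t\to0\}$ up to a $\P_{x_0}$-null set.

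For the reverse inclusion, set $h(x):=\P_x(W=0)$. I would first use the branching property (Theorem \ref{th1}) to show $h$ is multiplicative: started from $x+y$, $Y$ has the law of the sum of two independent copies started from $x$ and $y$, so $W$ decomposes as an independent sum $W'+W''$ and, both summands being non-negative, $\{W=0\}=\{W'=0\}\cap\{W''=0\}$. Hence $h(x+y)=h(x)h(y)$; being multiplicative and non-increasing, $h(x)=e^{-\alpha x}$ for some $\alpha\in[0,\infty]$. The hypothesis $\P_{x_0}(W>0)>0$ gives $h(x_0)<1$, so $\alpha>0$; the degenerate case $\alpha=\infty$ (i.e. $W>0$ a.s.) makes $\{W=0\}$ null, whence by the first inclusion $\{Y_t\to0\}$ is null too, so the asserted identity holds trivially and I may assume $\alpha\in(0,\infty)$. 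The heart of the proof is the identity
$$\P_{x_0}(W=0\mid\mathcal{F}_t)=h(Y_t)=e^{-\alpha Y_t}\qquad\text{a.s.,}$$
where $(\mathcal{F}_t)$ is the natural filtration of $Y$. I would derive it from the Markov property combined with the branching property: conditionally on $\mathcal{F}_t$ the process after time $t$ is a copy started from $Y_t$, and since $\{W=0\}$ is read off the post-$t$ trajectory, its conditional probability depends only on $Y_t$ and equals $h(Y_t)$. Granting this, $e^{-\alpha Y_t}$ is a bounded $(\mathcal{F}_t)$-martingale and, by the martingale convergence theorem (the upward/L\'evy form, applicable since $\ind_{\{W=0\}}$ is bounded and $\mathcal{F}_\infty$-measurable), it converges a.s. to $\ind_{\{W=0\}}$, which is $\{0,1\}$-valued. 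Because $\alpha>0$, a limit equal to $1$ forces $Y_t\to0$ and a limit equal to $0$ forces $Y_t\to\infty$, with no other behaviour (convergence to an interior value, or oscillation) possible a.s. On $\{W=0\}$ the limit is $1$, hence $Y_t\to0$; this gives $\{W=0\}\subseteq\{Y_t\to0\}$, and together with the first inclusion yields the claimed equality.

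I expect the delicate point to be the justification of the conditional identity $\P_{x_0}(W=0\mid\mathcal{F}_t)=h(Y_t)$: one must verify that the limit variable $W$ \emph{restarts} correctly under the Markov shift, i.e. that the normalization $a_{t+u}/a_t$ felt by the shifted process is compatible with the one defining $W$, so that $\{W=0\}$ is determined by the post-$t$ trajectory through the single value $Y_t$. In the present application this is precisely where conditioning on the whole environment $\Delta$ is essential, for it is only conditionally on $\Delta$ that $Y$ enjoys the branching property and that the normalization $a_t=e^{-gt-\Delta_t}$ factorizes multiplicatively under time shifts; everything above should therefore be carried out conditionally on $\Delta$.
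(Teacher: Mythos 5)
Your argument is correct, but it is not the route the paper takes, so let me compare. The paper proceeds in two steps: first it shows, using the strong Markov property together with the stochastic monotonicity in the initial condition that the branching property provides, that $\limsup_{t\to\infty}Y_t\in\{0,\infty\}$ almost surely (if the process does not tend to $0$ it keeps returning above any level $x$ and hence exceeds any level $A$ with probability one); second, it introduces the hitting times $T_n=\inf\{t:Y_t\ge n\}$ and bounds $\P_{x_0}(W=0;\,T_n<\infty)\le\P_n(W=0)=\P_1(W=0)^n\to0$, which kills the event $\{W=0\}\cap\{\limsup Y_t=\infty\}$. You instead upgrade the multiplicativity $\P_{x+y}(W=0)=\P_x(W=0)\P_y(W=0)$ (which the paper only exploits at integer arguments) to the closed form $\P_x(W=0)=e^{-\alpha x}$ and conclude by L\'evy's $0$--$1$ law applied to the bounded martingale $e^{-\alpha Y_t}=\P(W=0\mid\mathcal F_t)$. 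This is shorter and delivers the full dichotomy $Y_t\to0$ or $Y_t\to\infty$ in one stroke, whereas the paper's first step (unboundedness on non-extinction) is obtained from monotonicity alone and is of independent interest. The delicate point you flag at the end --- that $\{W=0\}$ must be readable from the post-$t$ trajectory through $Y_t$ alone, i.e.\ that the normalization restarts compatibly --- is real, but it is not specific to your route: the paper's own proof makes exactly the same move when it writes $\P_{Y_{T_n}}(W=0)$ after an application of the strong Markov property, and both arguments should be read conditionally on $\Delta$ in the application. The one place where your version is marginally more exposed is that, in the genuinely time-inhomogeneous quenched setting, your $\alpha$ acquires a $t$-dependence and the final implication ``martingale limit $1$ forces $Y_t\to0$'' then requires $\liminf_t\alpha_t>0$; the paper's formulation faces the analogous issue at the stopping times $T_n$, so neither proof is strictly cleaner on this score, but it is worth being aware that your conclusion rests on that uniformity.
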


\begin{proof}
First, we prove that
\be
\label{limsup}
\P_{x_0}(\limsup_{t\to\infty} Y_t =\infty \ \vert \ \limsup_{t\to\infty} Y_t >0)=1.
\ee
Let $0<x\leq  x_0 \leq A$ be fixed. Since $a_t \rightarrow 0$ and $\P_{x}(W>0)>0$,  there exists $t_0>0$ such that
$\alpha:=\P_{x}(Y_{t_0}\geq A)>0.$
By the branching property, the process is stochastically monotone as a function of its initial value. Thus, for every $y \geq x$ (including $y=x_0)$,
 $$\P_{y}(Y_{t_0}\geq A)\geq \alpha>0.$$
We define recursively  the  stopping times
$$T_0:=0, \qquad T_{i+1}=\inf \{t \geq T_i +t_0 : Y_t \geq x\} \qquad (i\geq 0){.}$$
For any  $i \in \N^*$, the strong Markov property implies
$$ \P_{x_0}(Y_{T_i+t_0}\geq A \ | \ (Y_{t} : t\leq T_i),  \  T_i<\infty)\geq \alpha.  $$
Conditionally on $\{\limsup_{t\to\infty} Y_t > x\}$,  the stopping times $T_i$ are finite a.s. and  for all $0<x\leq x_0\leq A$,
$$\P_{x_0}( \forall   i \geq 0 : Y_{T_i+t_0} < A, \ \limsup_{t\to\infty} Y_t > x)=0.$$
Then, $\P_{x_0}( \limsup_{t\rightarrow \infty} Y_t < \infty, \ \limsup_{t\to\infty} Y_t > x)=0.$
Now since $\{ \limsup_{t\to\infty} Y_t >0\}= \cup_{x\in (0,x_0]} \{\limsup_{t\to\infty} Y_t > x\}$, we get (\ref{limsup}). 

Next, we consider the stopping times $T_n=\inf\{ t\geq 0 : Y_t\geq n\}$. The strong Markov property and branching property imply
\begin{eqnarray*}\P_{x_0}(W=0;  T_n<\infty) = \E_{x_0}\Big( \mathbf{1}_{T_n<\infty} \P_{Y_{T_n}}(W=0) \Big) 
\leq  \P_n(a_t Y_t\underset{t\rightarrow\infty}{\longrightarrow} 0)= \P_1(a_t Y_t\underset{t\rightarrow\infty}{\longrightarrow} 0)^n,\end{eqnarray*}
which goes to zero as $n \rightarrow \infty$, since $\P_1(a_t Y_t\stackrel{t\rightarrow\infty}{\longrightarrow} 0)=\P_1(W=0)<1$. Then,  
$$0=\P_{x_0}(W=0; \forall n :  T_n<\infty)=\P_{x_0}(W=0, \limsup_{t\rightarrow\infty} Y_t=\infty)=\P_{x_0}(W=0, \limsup_{t\rightarrow\infty} Y_t>0),$$
where the last identity comes from  $(\ref{limsup})$. This completes the proof.
\end{proof}

\subsection{A Central limit theorem}\label{subTCL}

We need the following central limit theorem for L\'evy processes in Corollary \ref{TCL}. 

\begin{Lem} Under the assumption (\ref{CLT})
we have
\[
\frac{gt+\Delta_t-\textbf{m}t}{\rho \sqrt{t}}\xrightarrow[t\to \infty]{d}{ \mathcal{N}}(0,1).
\]
\end{Lem}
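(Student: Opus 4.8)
The plan is to recognise the statement as the classical central limit theorem for the finite-variance Lévy process $L:=(gt+\Delta_t,\,t\ge0)$, and to deduce it from Theorem~3.5 of Doney and Maller \cite{MR1922446} once the hypotheses are shown to reduce, in our setting, to the finiteness of the variance of $\Delta_1$. The first step is to establish the moment bounds. Condition (\ref{condh1}) gives $\int_{(0,\infty)}(1\wedge|m-1|)\,\nu(\ud m)<\infty$; since $(\log m)^2\lesssim|m-1|^2\le 1\wedge|m-1|$ near $m=1$ and $\nu$ is finite on every set bounded away from $m=1$, this already yields $\int_{\{|\log m|<1\}}(\log m)^2\,\nu(\ud m)<\infty$. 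Combined with assumption (\ref{CLT}), which controls exactly the complementary region $\{|\log m|\ge1\}=(0,e^{-1}]\cup[e,\infty)$, we obtain the full second moment $\rho^2=\int_0^\infty(\log m)^2\,\nu(\ud m)<\infty$; and because $|\log m|\le(\log m)^2$ on $\{|\log m|\ge1\}$, the first moment $\int_0^\infty|\log m|\,\nu(\ud m)$ is finite as well.

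The second step is to identify the centering and scaling. Since $\Delta$ is a pure-jump process of bounded variation whose Lévy measure is the image of $\nu$ under $m\mapsto\log m$, the Lévy–Khintchine formula—differentiated at $0$, which is legitimate by the moment bounds just obtained—gives $\E[L_1]=g+\int_0^\infty\log m\,\nu(\ud m)$ and $\mathrm{Var}(L_1)=\int_0^\infty(\log m)^2\,\nu(\ud m)=\rho^2$; these are precisely the constants $\textbf{m}$ and $\rho$ of the statement. With finite variance in hand, $L$ lies in the domain of attraction of the normal law with linear norming, and Theorem~3.5 of \cite{MR1922446} applies directly with normalisation $\rho\sqrt t$ and centering $\textbf{m}\,t$, yielding the claim.

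Alternatively, and more self-containedly, I would run the standard two-step argument. Along integer times the increments $\xi_k:=L_k-L_{k-1}$ are i.i.d.\ copies of $L_1$, with mean $\textbf{m}$ and variance $\rho^2$, so the Lindeberg–Lévy theorem gives $(L_n-\textbf{m}\,n)/(\rho\sqrt n)\xrightarrow{d}\mathcal{N}(0,1)$. To pass to continuous time, I write $t=\lfloor t\rfloor+\{t\}$ and note that $L_t-L_{\lfloor t\rfloor}$ has the law of $L_{\{t\}}$ with $\{t\}\in[0,1)$. Since $(L_u-\textbf{m}\,u)_{u\ge0}$ is a square-integrable martingale, Doob's maximal inequality gives $\E[\sup_{0\le u\le1}(L_u-\textbf{m}\,u)^2]\le 4\rho^2<\infty$, hence $\E[(L_t-L_{\lfloor t\rfloor})^2]$ is bounded uniformly in $t$ and $(L_t-L_{\lfloor t\rfloor})/\sqrt t\to0$ in probability. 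Using $\sqrt{\lfloor t\rfloor}/\sqrt t\to1$ and $(\lfloor t\rfloor-t)\textbf{m}/\sqrt t\to0$, Slutsky's theorem upgrades the integer-time convergence to the full statement.

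The central limit theorem itself is routine once finite variance is secured; the points requiring genuine care are that the mild assumption (\ref{CLT}), together with (\ref{condh1}), really delivers the full second \emph{and} first moments of $\Delta_1$, and—if one follows Doney and Maller—the verification that their truncated-mean centering function converges as $t\to\infty$ to the true mean $\textbf{m}$, so that the linear centering $\textbf{m}\,t$ is the correct one. This last point is exactly where the finiteness of $\int_0^\infty|\log m|\,\nu(\ud m)$ is used, and it is the main subtlety I would watch for.
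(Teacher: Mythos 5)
Your proposal is correct, and its self-contained second argument takes a genuinely different route from the paper. The paper proves the lemma by invoking Theorem~3.5 of Doney and Maller: it must verify the hypothesis $U(x)/(x^2T(x))\to\infty$ for $T(x)=\eta((-\infty,-x))+\eta((x,\infty))$ and $U(x)=2\int_0^x yT(y)\,\ud y$, identify the norming functions ($b(t)\sim\rho\sqrt t$, $a(t)\sim\textbf{m}t$), and then treat separately the degenerate case where $T(x)=0$ for large $x$ (bounded jumps), for which that criterion is vacuous and the paper falls back on a direct characteristic-function computation. Your first route is the same as the paper's but glosses over exactly these two points ("applies directly" hides the verification of the $U/(x^2T)$ condition and the bounded-jump case), so on its own it would be incomplete. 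Your second route, however, is complete and arguably cleaner: the discrete-skeleton CLT for the i.i.d.\ increments $L_k-L_{k-1}$, plus Doob's maximal inequality on the square-integrable martingale $(L_u-\textbf{m}u)_{u\ge0}$ to kill the fractional remainder, plus Slutsky, bypasses Doney--Maller entirely and needs nothing beyond the finiteness of $\rho^2$, which you correctly extract from (\ref{condh1}) near $m=1$ and from (\ref{CLT}) on $\{|\log m|\ge1\}$. One small remark: you identify $\textbf{m}=g+\int_0^\infty\log m\,\nu(\ud m)=\E[L_1]$, which is what the paper's own proof uses ($A(x)\to g+\int z\,\eta(\ud z)$), even though the displayed definition of $\textbf{m}$ in Corollary~\ref{TCL} restricts the integral to $\{|\log m|\ge1\}$; your reading is the consistent one.
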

\begin{proof}
For simplicity, let $\eta$ be the image measure of $\nu$ under the mapping $x\mapsto e^x$. Hence,  assumption (\ref{CLT}) is equivalent to $\int_{|x|\ge 1} x^2 \eta(\ud x) <\infty$, or $\E[\Delta_1^2]<\infty$. 

We define
$T(x)=\eta\big((-\infty, -x)\big)+\eta\big((x,\infty)\big)$ and $
U(x)=2\int_0^x yT(y) \ud y,$
and  assume that $T(x)>0$ for all $x>0$.
According to Theorem 3.5 in Doney and Maller \cite{MR1922446} there exist two functions $a(t), b(t)> 0$ such that
\[
\frac{gt+\Delta_t-a(t)}{b(t)}\xrightarrow[t\to \infty]{d}{\mathcal{N}}(0,1),
\quad\textrm{ 
if and only if}\quad 
\frac{U(x)}{x^2T(x)}\xrightarrow[x\to \infty]{}\infty.
\]
If the above condition is satisfied, then $b$ is regularly varying with index $1/2$ and it may be chosen to be strictly increasing to $\infty$ as $t\to \infty$. Moreover $b^2(t)=tU(b(t))$ and 
$a(t)=tA(b(t))$, where 
\[
A(x)=g+\int_{\{|z|<1\}}z\eta(\ud z)+\eta\big((1,\infty)\big)-\eta\big((-\infty, -1)\big)+\int_1^x\Big(\eta\big((y,\infty)\big)-\eta\big((-\infty, -y)\big)\Big)\ud y.
\]
Note that under our assumption $x^2T(x)\to 0$, as $x\to \infty$. Moreover, note
\[
U(x)=x^2T(x)+\int_{(-x,0)} z^2\eta(\ud x)+\int_{(0,x)} z^2\eta(\ud x),
\]
and 
\[
A(x)=g+\int_{\{|z|<x\}}z\eta(\ud z)+x\Big(\eta\big((x,\infty)\big)-\eta\big((-\infty, -x)\big)\Big).
\]
Hence assumption (\ref{CLT}) implies that 
\[
U(x)\xrightarrow[x\to\infty]{}\int_{(-\infty,\infty)} z^2\eta(\ud z)=\rho^2,\quad A(x)\xrightarrow[x\to\infty]{}g+\int_{\mathbb{R}} z\eta(\ud z)=\textbf{m},
\]
Therefore, we deduce
$U(x)/(x^2T(x))\to\infty$ as $x\to\infty$,
  $b(t)\sim\rho \sqrt{t}$ and $a(t)\sim\textbf{m}t$, as $t\to \infty$. 
  
  Now assume that $T(x)=0$, for  $x$ large enough. Define
  \[
  \Psi(\lambda, t)=-\log \mathbb{E}\left[\exp\left\{i\lambda\left(\frac{gt+\Delta_t-a(t)}{b(t)}\right)\right\}\right],
  \]
  where the functions $a(t)$ and $b(t)$ are defined as above.  Hence since the process $(\Delta_t,t\ge0)$ is of bounded variation, from  the definition of $a(t)$ and the L\'evy-Khintchine formula  we deduce
  \[
  \begin{split}
 \Psi(\lambda, t)&=-i\lambda\left( \frac{g t}{b(t)}-\frac{a(t)}{b(t)}\right)+t\int_{\mathbb{R}}\Big(1-e^{\frac{i\lambda }{b(t)}x}\Big)\eta(\ud x)\\
 &=t\int_{\{|x|<b(t)\}}\Big(1-e^{\frac{i\lambda }{b(t)}x}+\frac{i\lambda}{b(t)}x+\frac{(i\lambda)^2}{2b^2(t)}x^2\Big)\eta(\ud x) -\frac{t(i\lambda)^2}{2b^2(t)}\int_{\{|x|<b(t)\}}x^2\eta(\ud x)\\
  &\hspace{2cm}+t\int_{\{|x|\geq b(t)\}}\Big(1-e^{\frac{i\lambda }{b(t)}x}\Big)\eta(\ud x)+i\lambda t\Big(\eta(b(t), \infty)-\eta(- \infty, -b(t))\Big).
  \end{split}
  \]
  Since $T(x)=0$ for all $x$ large, $b(t)\to\infty$ and $t^{-1}b^2(t)\to{\rho^2}$, as $t\to\infty$, therefore
  \[
 \Psi(\lambda, t)\xrightarrow[t\to\infty]{}\frac{\lambda^2}{2}, 
  \]
  which implies the result thanks to Lévy's Theorem.
\end{proof}

\subsection{A technical Lemma} \label{techlem}

We now prove {a technical lemma} that  is  needed in the proofs of Section \ref{demothm}.

\begin{proof}[Proof of Lemma \ref{lemtecaussi(quel_nom_original...)}]
To obtain (\ref{majepsilon}), it is enough to choose $\varepsilon\leq 1 $ as we {assume} in (\ref{defF}) that $\varsigma \geq 1 $.

In order to  prove (\ref{controle}), we first define the function $\tilde{h}: x \in \R^+ \mapsto (1+x)^{1-\varsigma}h(x) $ and let $0\le x \le y$. Then,
\begin{eqnarray} \label{decoF}
\frac{F(x)-F(y)}{C_F} &\leq& \Big((x+1)^{-{1}/{\beta}}-(y+1)^{-{1}/{\beta}}\Big)+(1+y)^{-{1}/{\beta}-1}\Big|\tilde{h}(x)-\tilde{h}(y)\Big| \nonumber\\
&& \quad + \Big|\tilde{h}(x)\Big|\Big((1+x)^{-{1}/{\beta}-1}-(1+y)^{-{1}/{\beta}-1}\Big).
\end{eqnarray}
We deal with the second term of the right hand side. Denoting by $k$ the Lipschitz constant of $\tilde{h}$ and  applying  the Mean Value Theorem to $z \in \R_+ \mapsto (z+1)^{-1/\beta}$ on $[x,y]$, we get
$$(1+y)^{-{1}/{\beta}-1}\Big|\tilde{h}(x)-\tilde{h}(y)\Big| \leq k(y+1)^{-1/\beta -1} (y-x) \leq  k\beta\left((x+1)^{-1/\beta}-(y+1)^{-1/\beta}\right). $$
Moreover, as $\beta \in (0,1]$, we have the following inequalities :
$$
\left(\frac{1+y}{1+x}\right)^{1+1/\beta}-1 \leq \left(\left(\frac{1+y}{1+x}\right)^{1/\beta}-1 \right)\left(\frac{1+y}{1+x}-1 \right)\leq \left(\left(\frac{y}{x}\right)^{1/\beta}-1 \right)2\frac{1+y}{1+x}$$
Dividing by $(1+y)^{{1}/{\beta}+1}$ and using $(1+y)/[(1+x)(1+y)^{{1}/{\beta}+1}] \leq y^{-1/\beta}$  {yield}
$$
(1+x)^{-{1}/{\beta}-1}-(1+y)^{-{1}/{\beta}-1}\leq 
2 \Big( x^{-1/\beta}-y^{-1/\beta} \Big).$$
Similarly $(1+x)^{-{1}/{\beta}}-(1+y)^{-{1}/{\beta}} \leq x^{-1/\beta}-y^{-1/\beta}$ and  equation (\ref{decoF})  give us
\begin{eqnarray*}
 0 \leq F(x)-F(y) &\leq& C_F(1+2[\|h\|_\infty+k\beta]) \Big( x^{-1/\beta}-y^{-1/\beta} \Big).
\end{eqnarray*}
This completes the  proof.
\end{proof}

\subsection{Approximations of the survival probability for $\nu(0,\infty)=\infty$}
\label{details}

Finally, we prove  Corollary \ref{cor} in the case when $\nu(0,\infty)=\infty.$

\begin{proof}[End of the proof of Corollary \ref{cor}]

We  let $A^{\varepsilon_1,\varepsilon_2}=(0,1-\varepsilon_1)\cup(1+\varepsilon_2,\infty)$, where $0<1-\varepsilon_1<1<1+\varepsilon_2$ and define  the Poisson random  measure $N_1^{\varepsilon_1,\varepsilon_2}$ as the restriction of $N_1$ to   ${\R^+\times A^{\varepsilon_1,\varepsilon_2}}$. We denote by  $\ud t \nu^{\varepsilon_1,\varepsilon_2}(\ud {m})$ for its intensity
 measure, where 
$\nu^{\varepsilon_1,\varepsilon_2}(\ud {m})={\mathbf 1}_{\{{m}\in A^{\varepsilon_1,\varepsilon_2}\}}\nu(\ud {m})$, 
and the corresponding L\'evy process $\Delta^{\varepsilon_1,\varepsilon_2}$ is defined by
$$
 \Delta^{\varepsilon_1,\varepsilon_2}_t=\int_0^t \int_{(0,\infty)}\log {m} \hspace{.1cm} N_1^{\varepsilon_1,\varepsilon_2}(\ud s,\ud {m}).
$$
We also consider the CSBP's $Y^{\varepsilon_1,\varepsilon_2}$ (resp $Y^{\varepsilon_1,\varepsilon_2,-}$ and $Y^{\varepsilon_1,\varepsilon_2,+}$) with branching mechanism $\psi$ (resp. $\psi_-$ and $\psi_+$) and the same catastrophes $\Delta^{\varepsilon_1,\varepsilon_2}$ via (\ref{EDS}). Since $\nu^{\varepsilon_1,\varepsilon_2}(0,\infty)<\infty$, from the first step we have
$u^{\varepsilon_1,\varepsilon_2}_{+,t}(\lambda) \leq u^{\varepsilon_1,\varepsilon_2}(t,\lambda) \leq u^{\varepsilon_1,\varepsilon_2}_{-,t}(\lambda),$
where as expected  $\E[\exp\{-\lambda Y^{\varepsilon_1,\varepsilon_2,*}_t\}]=\exp\{-u^{\varepsilon_1,\varepsilon_2}_{*,t}(\lambda)\}$ for each $* \in \{ +,\emptyset,- \}$.

Similarly, let $A^{\varepsilon_1}=(0,1-\varepsilon_1)\cup(1,\infty)$ and define  the Poisson random  measure $N_1^{\varepsilon_1}$ as the restriction of $N_1$ to   ${\R^+\times A^{\varepsilon_1}}$ with intensity measure $\ud t \nu^{\varepsilon_1}(\ud {m})$, where 
$\nu^{\varepsilon_1}(\ud {m})={\mathbf 1}_{\{{m}\in A^{\varepsilon_1}\}}\nu(\ud {m})$. Let us fix $t$ in $\R^*_+$, 
and define $Y^{\varepsilon_1}$ as the unique strong solution of 
\begin{equation}\label{Yvarepsilon1}\begin{split}
Y^{\varepsilon_1}_t=Y_0+\int_0^t gY^{\varepsilon_1}_s \ud s +\int_0^t\sqrt{2\sigma^2 Y^{\varepsilon_1}_s} \ud B_s
&+\int_0^t\int_{[0,\infty)}\int_0^{Y^{\varepsilon_1}_{s-}} z\widetilde{N}_0(\ud s, \ud z, \ud u) \\
&+\int_0^t\int_{[0,\infty)} \Big({m}-1\Big) Y^{\varepsilon_1}_{s-}N_1^{\varepsilon_1}(\ud s,\ud {m}). 
\end{split} \end{equation}
We already know from Theorem \ref{th1} that Equation (\ref{Yvarepsilon1}) has a unique non-negative strong solution. Moreover, from Theorem 5.5 in 
\cite{RePEc:eee:spapps:v:120:y:2010:i:3:p:306-330} and the fact that $N_1^{\varepsilon_1}$ has the same jumps as $N_1^{\varepsilon_1,\varepsilon_2}$ plus additional jumps greater  than one, we conclude 
$$ Y^{\varepsilon_1,\varepsilon_2}_t \leq Y^{\varepsilon_1}_t,\qquad \textrm{ a.s.}$$
Using assumption (\ref{condh1}), we can  apply Gronwall's Lemma to the non-negative function $t \mapsto \E[Y^{\varepsilon_1}_t-Y^{\varepsilon_1,\varepsilon_2}_t]$ and  obtain 
$$ \E\Big[\big|Y^{\varepsilon_1,\varepsilon_2}_t-Y^{\varepsilon_1}_t\big|\Big] \xrightarrow[\varepsilon_2 \to 0]{} 0. $$

Now, since $Y^{\varepsilon_1,\varepsilon_2}$ is decreasing with $\varepsilon_2$, we finally get,  $ Y^{\varepsilon_1,\varepsilon_2}_t \xrightarrow[]{a.s.} Y^{\varepsilon_1}_t,$  as $\varepsilon_2\to 0$. 
Using similar arguments as above for $Y^{\varepsilon_1,\varepsilon_2,+}$ and $Y^{\varepsilon_1,\varepsilon_2,-}$, we deduce
$$u^{\varepsilon_1}_{+,t}(\lambda) \leq u^{\varepsilon_1}(t,\lambda) \leq u^{\varepsilon_1,}_{-,t}(\lambda).$$
In order to complete the proof, we let   $\varepsilon_1$ tend to $0$.
\end{proof}

{\bf Acknowledgements:} {\sl{The authors would like to thank   Jean-François Delmas, Sylvie M\'el\'eard {and Vladimir Vatutin} for their  reading of 
this paper and  their suggestions. They also want  to thank  Amaury Lambert for fruitful discussions at the beginning of this work, so as the two 
anonymous referees for several corrections and improvements. This work  was partially funded by project MANEGE `Mod\`eles
Al\'eatoires en \'Ecologie, G\'en\'etique et \'Evolution'
09-BLAN-0215 of ANR (French national research agency),  Chair Modelisation Mathematique et Biodiversite VEOLIA-Ecole Polytechnique-MNHN-F.X. and 
the professorial chair Jean Marjoulet.}
}

\end{document}